\newtheorem{thm}{Theorem}[section]
\newtheorem{cor}[thm]{Corollary}
\newtheorem{prop}[thm]{Proposition}
\newtheorem{lemma}[thm]{Lemma}
\newcommand{\fS}{{\mathfrak S}}
\DeclareMathOperator{\NCSym}{NCSym}
\DeclareMathOperator{\sign}{sign}
\DeclareMathOperator{\wt}{wt}
\definecolor{vividviolet}{rgb}{0.62, 0.0, 1.0}
\definecolor{OliveGreen}{rgb}{0.0, .6, .1}
\newcommand{\cA}{{\cal A}}
\newcounter{bar}
\newcommand{\case}{%
	\stepcounter{bar}%
	\thebar}
\newcommand{\Q}{\mathbb Q}
\newcommand{\arc}[1]{\draw[black] #1 arc (0:180:.5);}
\newcommand{\tic}[1]{\draw #1+(-.27,0) node[above right] #1 {\tiny /};}
\newcommand{\ticone}[1]{\draw #1+(.27,0) node #1 {\tiny /};}
\begin{document}

\title{Triangular Ladders $P_{d,2}$ are $e$-positive}
\author{Samantha Dahlberg}
\affil{Arizona State University}
 \date{}


\newcounter{foo}
\newcommand{\rtask}[1]{\refstepcounter{foo}\label{#1}}

\maketitle

\abstract{ In 1995 Stanley conjectured that the chromatic symmetric functions of the graphs $P_{d,2}$, which we call triangular ladders, were $e$-positive. In this paper we confirm this conjecture, which is also an unsolved case of the celebrated $(3+1)$-free conjecture. Our method is to follow the  generalization of the chromatic symmetric functions by  Gebhard and Sagan to symmetric functions in non-commuting variables. These functions satisfy a deletion-contraction property unlike the chromatic symmetric function in commuting variables. We do this by proving a new signed combinatorial formula for \emph{all} unit interval graphs on the basis of elementary symmetric functions. Then we prove $e$-positivity for triangular ladders by very carefully defining a sign-reversing involution on our signed combinatorial formula. This leaves us with certain positive terms and further allows us to expand on an already-known family of $e$-positive graphs by Gebhard and Sagan. 
}

\section{Introduction}

The chromatic symmetric function of a simple graph  $G$ defined by Richard Stanley~\cite{Stan95}, $X_G$, is a generalization of the chromatic polynomial defined by Birkoff~\cite{B12}, $\chi_G$, and has received a lot of attention as of late. These symmetric functions carry many properties over from their chromatic polynomials including the number of acyclic orientations~\cite{Stan95}, but do not satisfy a useful deletion-contraction property that  chromatic polynomials do. However, they do have connections to representation theory and algebraic geometry~\cite{HP}, which has been a further motivation in their study and particularly behind the study of their $e$-positivity that is the ability to write the function $X_G$ as a non-negative sum of elementary symmetric functions, and Schur-positivity that is the ability to write $X_G$ as a non-negative sum of Schur  functions. In 1995 Stanley~\cite{Stan95}  conjectured that if a poset is $(3+1)$-free then its incomparability graph is $e$-positive, which is equivalent to the Stanley-Stembridge conjecture in 1993~\cite{SS93}. This conjecture has been reduced to showing that  the incomparability graphs of  $(3+1)$ and $(2+2)$-free posets are $e$-positive by Guay-Paquet~\cite{GP}. These types of graphs are known as unit interval graphs and have a connection to Jacobi-Trudi matrices~\cite{SS93}. Gasharov~\cite{G99} has proven that  the incomparability graph of $(3+1)$-free poset is Schur-positive, which is weaker than the full conjecture since $e$-positivity implies Schur-positivity. 

There have been some partial results on this conjecture. The path and cycle graphs have been shown to be $e$-positive by Stanley in 1995~\cite{Stan95} with a full description of their coefficients in~\cite{W98}. Coefficients of other graphs have been studied in~\cite{P, PS}. Other works have focused on finding graph properties relating to $e$-positivity  with an emphasis on induced subgraphs~\cite{DFvW, HHT, T18}. Shareshian and Wachs~\cite{SW16} defined a generalization of the chromatic symmetric function in the space of quasi-symmetric functions and have generalized the $(3+1)$-free conjecture as well as conjectured that these quasi-symmetric functions are $e$-unimodal. This has further been generalized by Ellzey~\cite{E17} to circular indifference graphs. One new family of unit interval graphs has been proven to be $e$-positive by Cho and Huh~\cite{CH} with an old family by Stanley proven to be $e$-positive with a new proof.

In this paper we prove that the graphs $P_{d,2}$ are $e$-positive, which are specifically mentioned in Stanley's original 1995 paper [p190, 14] where he wrote  
\begin{center}``It remains open whether $P_{d,2}$ is $e$-positive.'' \end{center}
In order to do this we follow a different generalization of $X_G$ by  Gebhard and Sagan~\cite{GS01}  to symmetric functions in non-commuting variables, which does satisfy a deletion-contraction property. Gebhard and Sagan in their paper prove more graphs have an $e$-positive $X_G$ by semi-symmetrizing their chromatic symmetric functions in non-commuting variables.  We use ideas in their paper and expand on
their proven family of $e$-positive graphs including all $P_{d,2}$, which we call \emph{triangular ladders}.  
We do this by proving a new signed combinatorial formula for \emph{all} unit interval graphs in 
the basis of elementary symmetric functions. 
Then we prove $e$-positivity for triangular ladders by very carefully defining a sign-reversing involution on our signed combinatorial formula, which leaves us with certain positive terms.

In Section \ref{background} we describe the necessary background we need to derive our signed combinatorial  formula in the elementary basis including the definition of unit interval graphs and Gebhard and Sagan's deletion-contraction property in non-commuting variables. In Section~\ref{sec:formulaSetUp} we derive our signed combinatorial  formula in the elementary basis for any unit  interval graph. Our method is to  repeatedly use the deletion-contraction property on our graphs until we arrive at a single vertex and then reinterpret the coefficients in a combinatorial manner using arc diagrams with arc markings, vertex labels and vertex markings. In Section~\ref{sec:TL} we apply our signed combinatorial  formula to triangular ladders and carefully define a sign-reversing involution in order to prove these graphs are $e$-positive. Lastly, in Section~\ref{sec:More} we use the sign-reversing involution along with results by Gebhard and Sagan to show how we can combine complete graphs and triangular ladders to form more $e$-positive graphs. 

\section{Background}
\label{background}

In this section we will go over the necessary background needed derive our signed combinatorial  formula in the elementary basis. 
Throughout this paper we will work with simple graphs $G$ with labeled vertices and vertex labels in $[n]=\{1,2,\ldots, n\}$, and we particularly focus on  labeled unit  interval graphs. An {\it unit interval graph} on vertices in $[n]$ is a graph formed by a collection of intervals $[a_1,b_1]$, $[a_2,b_2]$, $\ldots$, $[a_l,b_l]$ where $a_k\leq b_k$ are in $[n]$ and where we define $[a,b]=\{a,a+1,\ldots, b\}$. The unit interval graph $G$ with the given intervals will have all possible edges from vertex $i$ to $j$ whenever $i,j\in[a_k,b_k]$ for some $k$. There are many equivalent ways to define unit interval graphs, with some proofs between the equivalent definitions in~\cite{E17} by Ellzey.   In the literature there are special families of unit interval graphs $P_{n,k}$, which are formed from the intervals $[1,1+k]$, $[2,2+k]$, $\ldots$, $[n-k,n]$ and is the notation Stanley uses in his paper~\cite{Stan95}. This notation defines many well-known families of graphs including the {\it complete graphs},  $K_n=P_{n,n-1}$, and the {\it paths}, $P_n=P_{n,1}$. Since in later sections we will be focusing on one particular family when $k=2$, we will call the $P_{n,2}$ the {\it triangular ladders}, $TL_n$,  to help the reader keep in mind our object of interest. 
Though it is not necessary for the definition,  will write tend to write the intervals as $[a_1,1],[a_2,2],\ldots,[a_n,n]$, which may be redundant. In Figure~\ref{fig:TL7} we draw the unit interval graph for intervals $[1,1],[1,2],[1,3],[2,4],[3,5],[4,6],[5,7]$.

\begin{figure}
\begin{center}
\begin{tikzpicture}
\draw[black] (1,0) arc (0:180:.5);
\draw[black] (2,0) arc (0:180:.5);
\draw[black] (3,0) arc (0:180:.5);
\draw[black] (1.5,0) arc (0:180:.5);
\draw[black] (2.5,0) arc (0:180:.5);
\draw[black] (0,0)--(3,0);
\filldraw[black] (0,0) circle [radius=2pt] node[below] {$1$};
\filldraw[black] (.5,0) circle [radius=2pt] node[below] {$2$};
\filldraw[black] (1,0) circle [radius=2pt] node[below] {$3$};
\filldraw[black] (2,0) circle [radius=2pt] node[below] {$5$};
\filldraw[black] (3,0) circle [radius=2pt] node[below] {$7$};
\filldraw[black] (1.5,0) circle [radius=2pt] node[below] {$4$};
\filldraw[black] (2.5,0) circle [radius=2pt] node[below] {$6$};
\end{tikzpicture}
\begin{tikzpicture}
\coordinate (A) at (0,0);
\coordinate (B) at (1,0);
\coordinate (C) at (1,1);
\coordinate (D) at (2,0);
\coordinate (E) at (2,1);
\coordinate (F) at (3,0);
\coordinate (G) at (3,1);
\draw[black] (A)--(B)--(C)--(D)--(E)--(F)--(G);
\draw[black] (A)--(C)--(E)--(G);
\draw[black] (B)--(D)--(F);
\filldraw[black] (A) circle [radius=2pt] node[below] {$1$};
\filldraw[black] (B) circle [radius=2pt] node[below] {$2$};
\filldraw[black] (C) circle [radius=2pt] node[above] {$3$};
\filldraw[black] (E) circle [radius=2pt] node[above] {$5$};
\filldraw[black] (G) circle [radius=2pt] node[above] {$7$};
\filldraw[black] (D) circle [radius=2pt] node[below] {$4$};
\filldraw[black] (F) circle [radius=2pt] node[below] {$6$};
\end{tikzpicture}
\hspace{1cm}
\begin{tikzpicture}
\coordinate (A) at (0,0);
\coordinate (B) at (1,-.5);
\coordinate (C) at (1,.5);
\coordinate (D) at (2,0);
\coordinate (E) at (3,-.5);
\coordinate (F) at (3,.5);
\coordinate (G) at (4,0);
\draw[black] (A)--(B)--(C)--(D)--(E)--(F)--(G);
\draw[black] (A)--(C)--(E)--(G);
\draw[black] (B)--(D)--(F);
\draw[black] (A)--(D);
\filldraw[black] (A) circle [radius=2pt] node[below] {$1$};
\filldraw[black] (B) circle [radius=2pt] node[below] {$2$};
\filldraw[black] (C) circle [radius=2pt] node[above] {$3$};
\filldraw[black] (E) circle [radius=2pt] node[below] {$5$};
\filldraw[black] (G) circle [radius=2pt] node[above] {$7$};
\filldraw[black] (D) circle [radius=2pt] node[below] {$4$};
\filldraw[black] (F) circle [radius=2pt] node[above] {$6$};
\end{tikzpicture}
\end{center}
\caption{The triangular ladder graph $TL_7$ on the left and center and $K_4\cdot TL_4$ on the right.}
\label{fig:TL7}
\end{figure}
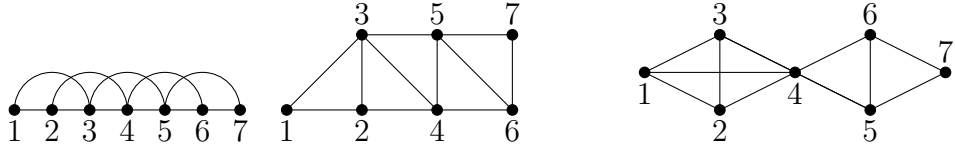

Before we introduce graphs colorings and the chromatic symmetric function in commuting or non-commuting variables let us review the algebra these objects exist in. The {\it algebra of symmetric functions in non-commuting variables} is a sub-algebra of $\Q[[x_1,x_2,\ldots]]$ where variables $x_1,x_2,\ldots$ do not commute and all $f\in\NCSym$ are unchanged by permuting subscripts. 
There are several classical bases that generate NCSym, including the power-sum basis and the elementary basis, all of which are indexed by set partitions. A {\it set partition}, $\pi=B_1/B_2/\cdots/B_l$ of $[n]$, denoted $\pi\vdash[n]$, is a collection of non-empty disjoint subsets $B_i\subseteq [n]$ called {\it blocks} that union to form the full set $[n]$. Given $\pi\vdash[n]$ and $\sigma\vdash[m]$ define $\pi|\sigma$ to be the set partition of $[n+m]$ we get from all blocks of $\pi$ together with all blocks of $\sigma$ except all elements in the blocks of $\sigma$ are increased by $n$. For example, $13/2|14/23=13/2/47/56$. Rosas and Sagan~\cite{RS06} define all the classical functions and give conversion formulas between them. In this paper we will only work with the power-sum and elementary functions. Given $\pi\vdash[n]$ the {\it  power-sum function in non-commuting variables}, $p_{\pi}$, in NCSym  is
$$p_{\pi}=\sum_{(i_1,i_2,\ldots, i_n)}x_{i_1}x_{i_2}\cdots x_{i_n},$$
which is summed over tuples $(i_1,i_2,\ldots, i_n)$ of positive integers where $i_j=i_k$ if $j$ and $k$ are in the same block in $\pi$. Given $\pi\vdash[n]$ the {\it  elementary symmetric function in non-commuting variables}, $e_{\pi}$, in NCSym  is
$$e_{\pi}=\sum_{(i_1,i_2,\ldots, i_n)}x_{i_1}x_{i_2}\cdots x_{i_n},$$
which is summed over tuples $(i_1,i_2,\ldots, i_n)$ of positive integers where $i_j\neq i_k$ if $j$ and $k$ are in the same block in $\pi$. 
These two bases are multiplicative, meaning that if $\pi=\pi_1|\pi_2$ then 
$$p_{\pi_1|\pi_2}=p_{\pi_1}p_{\pi_2} \text{ and } e_{\pi_1|\pi_2}=e_{\pi_1}e_{\pi_2}.$$
 Bergeron et. al.~\cite{B+06} proved the power-sum equality, but the elementary symmetric function equality comes quickly from the change of basis formula by Rosas and Sagan~\cite{RS06}, which is 
\begin{equation}
e_{\pi}=\sum_{\sigma\leq \pi} \mu(\hat 0, \sigma)p_{\sigma}
\label{eq:emulti}
\end{equation}
where $\sigma\leq \pi$ comes from the the poset of set partitions ordered by  refinement with $\hat 0=1/2/\cdots /n$ being the smallest element in the poset. For more information about this poset and its M\"{o}bius function $\mu$ see Stanley's book~\cite{S11}.  Since the multiplicativity of the elementary basis is vital to many of our proofs we prove it here. 
\begin{lemma}
For $\pi\vdash[n]$ with $\pi=\pi_1|\pi_2$ where $\pi_1$ and $\pi_2$ are non-empty set partitions we have
$e_{\pi_1|\pi_2}=e_{\pi_1}e_{\pi_2}.$
\end{lemma}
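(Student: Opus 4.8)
The plan is to reduce the claim to the already-established multiplicativity of the power-sum basis by pushing the left-hand side through the change-of-basis formula \eqref{eq:emulti}. Write $\pi_1\vdash[n_1]$ and $\pi_2\vdash[n_2]$, so that $\pi=\pi_1|\pi_2\vdash[n_1+n_2]$. First I would apply \eqref{eq:emulti} directly, obtaining
$$e_{\pi_1|\pi_2}=\sum_{\sigma\leq\pi_1|\pi_2}\mu(\hat{0},\sigma)\,p_\sigma,$$
and then analyze the structure of the index set of this sum.

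The key structural observation is that every block of $\pi_1|\pi_2$ is contained either in $[n_1]$ or in $\{n_1+1,\dots,n_1+n_2\}$. Hence if $\sigma$ refines $\pi_1|\pi_2$, each block of $\sigma$ lies entirely in one of these two halves, so $\sigma$ splits uniquely as $\sigma=\sigma_1|\sigma_2$ with $\sigma_1\leq\pi_1$ and $\sigma_2\leq\pi_2$. This sets up a bijection between the refinements of $\pi_1|\pi_2$ and pairs $(\sigma_1,\sigma_2)$ of refinements of $\pi_1$ and $\pi_2$, which lets me rewrite the single sum as a double sum over $\sigma_1\leq\pi_1$ and $\sigma_2\leq\pi_2$.

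Next I would factor the two ingredients of each summand. For the power sums I invoke their multiplicativity, $p_{\sigma_1|\sigma_2}=p_{\sigma_1}p_{\sigma_2}$. For the Möbius factor I use the standard description of the interval $[\hat{0},\sigma]$ in the lattice of set partitions as a product of smaller partition lattices indexed by the blocks of $\sigma$; equivalently, the explicit product formula $\mu(\hat{0},\sigma)=\prod_{B}(-1)^{|B|-1}(|B|-1)!$ taken over the blocks $B$ of $\sigma$ (see Stanley's book~\cite{S11}). Since the blocks of $\sigma=\sigma_1|\sigma_2$ are exactly the blocks of $\sigma_1$ together with the shifted blocks of $\sigma_2$, this product splits and yields $\mu(\hat{0},\sigma)=\mu(\hat{0},\sigma_1)\,\mu(\hat{0},\sigma_2)$. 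Substituting both factorizations, the double sum separates into a product of two single sums, each of which is precisely \eqref{eq:emulti} applied to $\pi_1$ and to $\pi_2$, giving $e_{\pi_1}e_{\pi_2}$ as desired.

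The only genuinely delicate point is the factorization of the Möbius function, and in particular verifying that the bijection $\sigma\leftrightarrow(\sigma_1,\sigma_2)$ is compatible with the refinement order so that $\mu$ factors. This rests on the fact that $[\hat{0},\pi_1|\pi_2]\cong[\hat{0},\pi_1]\times[\hat{0},\pi_2]$ as posets, after which the product theorem for Möbius functions does the work; the block-product formula quoted above makes this transparent because $\mu(\hat{0},\sigma)$ depends only on the block sizes of $\sigma$ and hence not on which ambient lattice it is computed in. Everything else is routine bookkeeping, and no cancellation or sign-reversing argument is needed at this stage.
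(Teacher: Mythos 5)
Your proposal is correct and is essentially the paper's own proof run in the opposite direction: the paper expands $e_{\pi_1}e_{\pi_2}$ via \eqref{eq:emulti} and reassembles it into $e_{\pi_1|\pi_2}$, while you expand $e_{\pi_1|\pi_2}$ and factor it, but both arguments rest on exactly the same three ingredients --- the change-of-basis formula, multiplicativity of the power sums $p_{\sigma_1|\sigma_2}=p_{\sigma_1}p_{\sigma_2}$, and the factorization $\mu(\hat0,\sigma_1|\sigma_2)=\mu(\hat0,\sigma_1)\mu(\hat0,\sigma_2)$ together with the bijection $\sigma\leftrightarrow(\sigma_1,\sigma_2)$ between refinements. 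Your explicit justification of the M\"obius factorization via the block-product formula is a nice addition of detail that the paper merely asserts, but it does not change the route.
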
 
\begin{proof}
Using the change of basis formula in equation~\eqref{eq:emulti} we have
$$e_{\pi_1}e_{\pi_2}=\sum_{\sigma_1\leq \pi_1} \mu(\hat 0, \sigma_1)p_{\sigma_2}\sum_{\sigma\leq \pi_2} \mu(\hat 0, \sigma_2)p_{\sigma_2}=\sum_{\sigma_1\leq \pi_1} \sum_{\sigma_2\leq \pi_2} \mu(\hat 0, \sigma_1)\mu(\hat 0, \sigma_2)p_{\sigma_1|\sigma_2}$$
using the fact that the power-sum basis is multiplicative. The M\"{o}bius function for the set partition poset ordered by refinement is also multiplicative in that $ \mu(\hat 0, \sigma_1)\mu(\hat 0, \sigma_2)=\mu(\hat 0, \sigma_1|\sigma_2)$ and $\sigma\leq \pi_1|\pi_2$ if and only if $\sigma=\sigma_1|\sigma_2$ with $\sigma_1\leq \pi_1$ and $\sigma_2\leq \pi_2$. Using the change of basis formula again we get our result. 
\end{proof}

Though most of our work in this paper will deal with symmetric functions in NCSym, the results we want to prove deal with $\Lambda$, the algebra we get by letting the variables in NCSym commute. Define $\rho:\NCSym\rightarrow \Lambda$ to be the commuting map where $f\in\NCSym$ is mapped to $f$ but we let the variables commute.  The algebra $\Lambda$ is indexed by {\it integer partitions}, $\lambda=\lambda_1\lambda_2\ldots \lambda_l$, which is a weakly decreasing list of positive integers where if all the $\lambda_i$ sum to $n$ we write $\lambda\vdash n$. There are similar classical bases in $\Lambda$ like the elementary symmetric functions and power-sum functions. The {\it  $i$th elementary symmetric function in commuting variables} is 
$$e_i=\sum_{i_1<i_2<\cdots<i_j}x_{i_1}x_{i_2}\cdots x_{i_j}$$ and 
the {\it  $i$th power-sum function in commuting variables} is 
$$p_i=x_1^i+x_2^i+x_3^i+\cdots$$
where as for an integer partition $\lambda=\lambda_1\lambda_2\ldots \lambda_l$ we define the {\it elementary symmetric function}, $e_{\lambda}$, and the {\it power-sum function}, $p_{\lambda}$, to be
$$e_{\lambda}=e_{\lambda_1}e_{\lambda_2}\cdots e_{\lambda_l}\text{ and }p_{\lambda}=p_{\lambda_1}p_{\lambda_2}\cdots p_{\lambda_l}.$$
These functions do have a close relationship to their relatives in NCSym. To relate them we define for a set partition $\pi\vdash[n]$ the integer partition $\lambda(\pi)\vdash n$, which we form by taking the sizes of all the blocks in  $\pi$. For example, $\lambda(134/25/67)=322$. Rosas and Sagan~\cite{RS06} showed that  $\rho(p_{\pi})=p_{\lambda(\pi)}$ and $\rho(e_{\pi})=\pi!e_{\lambda(\pi)}$ where $\pi!=\lambda(\pi)!=\lambda_1!\lambda_2!\cdots\lambda_l!$ and $\pi\vdash [n]$ is a set partition. 
We will call a function $f\in\Lambda$ {\it $e$-positive} if $f$ can be written as a non-negative sum of elementary symmetric functions. 

The symmetric functions in NCSym we study are defined from a graph $G$ and its proper colorings. A {\it proper coloring} $\kappa$ of a graph $G$ with vertex set $V$   is a function
$$\kappa : V\rightarrow \{1,2,\ldots\}$$such that if $v_1, v_2 \in V$ form an edge, then $\kappa(v_1)\neq \kappa(v_2)$. The {\it chromatic symmetric function in non-commuting variables} is defined to be
$$Y_G = \sum _\kappa x_{\kappa(v_1)}x_{\kappa(v_2)}\cdots x_{\kappa(v_n)}$$
where the sum is over all proper colorings $\kappa$ of $G$ where variables don't commute. If we let the variables commute then we get the {\it chromatic symmetric function in commuting variables}, which we denote $X_G=\rho(Y_G)$. We will call a graph $G$ itself {\it $e$-positive} if $X_G$ is $e$-positive. For example
$$Y_{G_n}=e_{12\cdots n}\text{ and }\rho(Y_{K_n})=X_{K_n}=n!e_n,$$
so all complete graphs are $e$-positive. 

The main result of the paper is proving a certain family of graphs is $e$-positive. This family of graphs is formed by combining complete graphs and triangular ladders in a certain way.  Given a graph $G$ with labels in $[n]$ and a graph $H$ with labels in $[m]$ we define their {\it concatenation} to be the graph $G\cdot H$  on vertices $[n+m-1]$ where the graph on the first $n$ vertices  is isomorphic to $G$ and the graph on the last  $m$ vertices  is isomorphic to $H$, where by isomorphic we mean that the underlying graphs are isomorphic and the vertex labels are in the same relative order. See Figure~\ref{fig:TL7} for an example. 

Though the chromatic symmetric function $X_G$ in commuting variables doesn't satisfy a deletion-contraction property, the $Y_G$, shown by Gebhard and Sagan~\cite{GS01}, do satisfy a deletion-contraction property. We define the {\it deletion} of an edge $\epsilon$ of $G$, $G\setminus \epsilon$, to be the graph $G$ with edge $\epsilon$ removed. Though contraction can be defined for any edge, for our purposes and for simplicity, we will only define the {\it contraction} of an edge $\epsilon$ that is between vertices $j$ and $n$.
The contracted graph, $G/\epsilon$, is the graph $G$ where we identify the vertices $j$ and $n$ and remove any multi-edges or loops created. In order to handle the idea of edge contraction in terms of functions in NCSym we   define an induced function. 
Define the {\it induced} monomial to be 
$$x_{i_1}x_{i_2}\dots x_{i_j} \dots x_{i_{n-1}}\uparrow_j^n=x_{i_1}x_{i_2}\dots x_{i_j} \dots x_{i_{n-1}}x_{i_j}$$
 where we make an extra copy of the $j$th variable at the end and extend this definition linearly. Given an integer partition  $\pi\vdash [n-1]$ we define for positive $j<n$ that $\pi\oplus_j n\vdash [n]$ is the integer partition $\pi$ but  we place $n$ in the same block as $j$. For example, $14/23\oplus_4 5=145/23$. We extend this definition to $j=n$ by letting $\pi\oplus_n n=\pi/n$. Gebhard and Sagan~\cite{GS01} offer that it is not hard to see  for $\pi\vdash [n-1]$ and $j<n$ that 
\begin{equation}
p_{\pi}\uparrow_j^n = p_{\pi\oplus_j n}.
\label{eq.induced}
\end{equation}
For ease of notation later we define $$p_{\pi}\uparrow_n^n=p_{\pi\oplus_n n}=p_{\pi}p_1$$
and extend linearly. 

\begin{prop}[Deletion-Contraction,  Gebhard and Sagan~\cite{GS01} Proposition 3.5] 
For $G$ with vertices $V=[n]$ and an edge $\epsilon$ between vertices $j$ and $n$ we have 
$$Y_G=Y_{G\setminus \epsilon}-Y_{G/\epsilon}\uparrow_j^n.$$
\label{prop.DeletionContraction}
\end{prop}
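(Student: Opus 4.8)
The plan is a direct combinatorial argument that tracks proper colorings through the monomial expansion of $Y_{G\setminus\epsilon}$. The key observation is that $G$ and $G\setminus\epsilon$ differ only in the single edge $\epsilon$ between $j$ and $n$, so a proper coloring of $G\setminus\epsilon$ fails to be a proper coloring of $G$ exactly when $\kappa(j)=\kappa(n)$. I would begin by writing $Y_{G\setminus\epsilon}=\sum_{\kappa}x_{\kappa(v_1)}\cdots x_{\kappa(v_n)}$ as a sum over all proper colorings $\kappa$ of $G\setminus\epsilon$, and then splitting this sum into the two classes: those $\kappa$ with $\kappa(j)\neq\kappa(n)$ and those with $\kappa(j)=\kappa(n)$.

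The first class is immediate. A coloring proper on $G\setminus\epsilon$ with $\kappa(j)\neq\kappa(n)$ also respects the extra edge $\epsilon$, and is therefore a proper coloring of $G$; conversely, every proper coloring of $G$ has $\kappa(j)\neq\kappa(n)$ and is proper on $G\setminus\epsilon$. Thus this class contributes precisely $Y_G$, with the same monomials.

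The heart of the argument is identifying the second class with $Y_{G/\epsilon}\uparrow_j^n$. Recall that $G/\epsilon$ has vertex set $[n-1]$ with the merged vertex labeled $j$. I would set up the bijection between proper colorings $\kappa'$ of $G/\epsilon$ and proper colorings $\kappa$ of $G\setminus\epsilon$ with $\kappa(j)=\kappa(n)$, given by restriction $\kappa'=\kappa|_{[n-1]}$ in one direction and extension $\kappa(n):=\kappa'(j)$ in the other. Under this map the monomial of $\kappa$ is $x_{\kappa'(v_1)}\cdots x_{\kappa'(v_{n-1})}x_{\kappa(n)}=x_{\kappa'(v_1)}\cdots x_{\kappa'(v_{n-1})}x_{\kappa'(v_j)}$, which is by definition $\bigl(x_{\kappa'(v_1)}\cdots x_{\kappa'(v_{n-1})}\bigr)\uparrow_j^n$. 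Summing over the bijection yields $Y_{G/\epsilon}\uparrow_j^n$, and combining the two classes gives $Y_{G\setminus\epsilon}=Y_G+Y_{G/\epsilon}\uparrow_j^n$, which rearranges to the claim.

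The main obstacle is verifying that restriction and extension really do form a bijection on \emph{proper} colorings, i.e.\ that properness is preserved in both directions by matching the two edge sets. The delicate case is an edge of $G/\epsilon$ incident to the merged vertex $j$ that arises from an edge $na$ of $G$ with $a\neq j$: properness on $G/\epsilon$ demands $\kappa'(j)\neq\kappa'(a)$, whereas on $G\setminus\epsilon$ the governing constraint is $\kappa(n)\neq\kappa(a)$, and these two agree exactly because we are in the regime $\kappa(j)=\kappa(n)$. I would also note that deleting multi-edges in forming $G/\epsilon$ causes no difficulty: if a vertex $a$ is adjacent to both $j$ and $n$ in $G$, the two constraints $\kappa(j)\neq\kappa(a)$ and $\kappa(n)\neq\kappa(a)$ collapse to the single constraint $\kappa'(j)\neq\kappa'(a)$, matching the single surviving edge in $G/\epsilon$.
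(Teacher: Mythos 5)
Your proof is correct. Note that the paper does not actually prove this proposition---it is quoted directly from Gebhard and Sagan~\cite{GS01} (their Proposition 3.5)---so there is no in-paper argument to compare against; your proof, splitting the proper colorings of $G\setminus \epsilon$ according to whether $\kappa(j)=\kappa(n)$, identifying the unequal-color class with $Y_G$, and matching the equal-color class to $Y_{G/\epsilon}\uparrow_j^n$ via the restriction/extension bijection, is the standard argument and is essentially the original proof of Gebhard and Sagan, including the necessary checks that the merged-vertex constraints in $G/\epsilon$ agree with the constraints at $n$ precisely because $\kappa(j)=\kappa(n)$, and that collapsed multi-edges cause no discrepancy.
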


Though there are nice formulas for inducing in the power-sum  basis, the formula for the elementary basis has many terms. However, after symmetrizing, many of these terms cancel out. Gebhard and Sagan defined equivalence classes on set partitions that enable us to partially symmetrize functions. We will say two set partitions $\pi$ and $\sigma$ are $\pi\sim \sigma$ if
\begin{enumerate}
\item $\lambda(\pi)=\lambda(\sigma)$ and
\item if $A$ and $B$ are blocks of $\pi$ and $\sigma$ respectively and if $n\in A$ and $n\in B$ then $|A|=|B|$.
\end{enumerate}
Define 
\begin{equation}
(\pi)=\{\sigma:\sigma\sim \pi\}.
\label{eq:equivclass}
\end{equation}
Say two functions $f,g\in\NCSym$ are  $f\equiv_n g$  if the sum of coefficients in the elementary basis in the same equivalence classes are the same. For example, consider the chromatic symmetric function of the path graph on three vertices calculated by Gebhard and Sagan in~\cite{GS01}. We have
$$Y_{P_3}=\frac{1}{2}(e_{12/3}-e_{13/2}+e_{1/23}+e_{123})\equiv_3  \frac{1}{2}(e_{12/3}+e_{123})$$
because $13/2\sim 1/23$. 
Our study in this paper is about whether graphs $G$ themselves are $e$-positive, which is dealing with $X_G$ in full commuting variables.  Though it is an abuse of terminology, our goal is to show that $Y_G$ is $e$-positive after  partially symmetrizing variables along the lines of these equivalence classes. To formalize this, we call a function $f\in\NCSym$ {\it semi-symmetrized $e$-positive} if $f\equiv_n g$ for some $g\in\NCSym$ that can be written as  a non-negative sum of elementary symmetric functions in non-commuting variables. We call a graph $G$ {\it semi-symmetrized $e$-positive} if $Y_G$ is semi-symmetrized $e$-positive.  It follows that if $Y_G$ is semi-symmetrized $e$-positive then certainly $\rho(Y_G)=X_G$ is $e$-positive and $G$ is e-positive. This makes semi-symmetrized $e$-positivity  a stronger condition than $e$-positivity.

There are two propositions by Gebhard and Sagan that are essential to our proofs. One is a formula for inducing elementary symmetric functions and the other is a relabeling proposition. 

\begin{prop}[Gebhard and Sagan~\cite{GS01} Corollary 6.1] For $\pi\vdash [n-1]$, $j<n$ and $b$ the size of the block in $\pi$ containing $n-1$ we have
$$e_{\pi}\uparrow_j^n\equiv_n \frac{1}{b}(e_{\pi/n}-e_{\pi\oplus_j n}).$$
\label{prop:gebsagInduce}
\end{prop}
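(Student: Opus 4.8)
The plan is to pass to the power-sum basis, where inducing is transparent, prove the identity there after symmetrizing, and touch the elementary basis only through the equivalence $\equiv_n$. First I would expand the left-hand side: combining the change-of-basis formula~\eqref{eq:emulti} with the power-sum induction rule~\eqref{eq.induced} gives
$$e_\pi\uparrow_j^n=\sum_{\sigma\le\pi}\mu(\hat 0,\sigma)\,p_\sigma\uparrow_j^n=\sum_{\sigma\le\pi}\mu(\hat 0,\sigma)\,p_{\sigma\oplus_j n},$$
so on the left $n$ is always adjoined to the block of $\sigma$ containing $j$. Let $B$ be the block of $\pi$ containing $j$, of size $b$; this is the block into which $n$ is inserted and, as will appear below, the block whose size controls the normalization.

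Next I would expand the two right-hand terms in the same basis. Every refinement of $\pi/n$ keeps $n$ a singleton, so $e_{\pi/n}=\sum_{\sigma\le\pi}\mu(\hat 0,\sigma)\,p_\sigma p_1$. For $e_{\pi\oplus_j n}$ I would sort the refinements $\tau\le\pi\oplus_j n$ by the block containing $n$: those in which $n$ is a singleton reproduce $e_{\pi/n}$ exactly, while the rest have $n$ in a block $C\cup\{n\}$ with $\emptyset\ne C\subseteq B$. Writing $\mu(\hat 0,-)$ as the product $\prod_D(-1)^{|D|-1}(|D|-1)!$ over blocks $D$ shows that enlarging a size-$|C|$ block by the element $n$ multiplies its Möbius factor by $-|C|$, whence
$$e_{\pi/n}-e_{\pi\oplus_j n}=\sum_{\rho\le\pi}\ \sum_{\substack{C\text{ a block of }\rho\\ C\subseteq B}}|C|\,\mu(\hat 0,\rho)\,p_{\rho+_C n},$$
where $\rho+_C n$ adjoins $n$ to the block $C$. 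Thus the right-hand side attaches $n$ to every sub-block of $B$ weighted by its size, whereas the left attaches $n$ only to the sub-block containing $j$.

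To reconcile these I would reinterpret $\equiv_n$. The relation $\sim$ remembers exactly $\lambda(\pi)$ and the size of the block containing $n$, and these are precisely the invariants of the action of $\fS_{n-1}$ that relabels $[n-1]$ and fixes $n$; hence the $\sim$-classes are the $\fS_{n-1}$-orbits, and $f\equiv_n g$ if and only if $\mathrm{Sym}(f)=\mathrm{Sym}(g)$, where $\mathrm{Sym}=\sum_{w\in\fS_{n-1}}w\cdot(-)$. Since relabeling the elements of $[n-1]$ permutes both the $p_\pi$ and the $e_\pi$ compatibly with~\eqref{eq:emulti} (the Möbius values and the refinement order are relabeling-invariant), the operator $\mathrm{Sym}$ commutes with the change of basis. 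It therefore suffices to prove $\mathrm{Sym}(e_\pi\uparrow_j^n)=\tfrac1b\,\mathrm{Sym}(e_{\pi/n}-e_{\pi\oplus_j n})$ as an identity of power sums, which removes the elementary basis from the computation entirely.

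The main obstacle is this final matching, and it is where the factor $1/b$ is born. Fixing $\rho\le\pi$, the left-hand contribution $p_{\rho+_{C}n}$ attaches $n$ to the single sub-block $C\subseteq B$ containing $j$; symmetrizing over $\fS_{n-1}$ spreads this term across the sub-blocks of $B$, and since exactly $|C|$ of the $b$ elements of $B$ lie in a given sub-block $C$, the symmetrized left-hand side reproduces the size-weighted sum $\sum_{C\subseteq B}|C|\,p_{\rho+_C n}$ of the right-hand side, up to the overall factor $b=|B|$; dividing by $b$ matches them. Carrying out this orbit-by-orbit (equivalently stabilizer) count for every $\rho$, and checking that the left-hand side never places $n$ outside a sub-block of $B$ so that the two supports coincide, yields the claimed equality. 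The delicate part throughout is the bookkeeping of orbit sizes that makes the weights combine to exactly $|C|$ and the normalization to exactly $1/b$; this is the combinatorial heart of the statement.
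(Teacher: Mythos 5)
A preliminary remark: the paper itself contains no proof of Proposition~\ref{prop:gebsagInduce} --- it is imported from Gebhard and Sagan and used as a black box --- so your argument is necessarily a different route, and I judge it on its own merits. Its skeleton is sound. The expansion $e_\pi\uparrow_j^n=\sum_{\sigma\le\pi}\mu(\hat 0,\sigma)\,p_{\sigma\oplus_j n}$ is correct; so is the splitting of the refinements of $\pi\oplus_j n$ according to the block containing $n$, with M\"{o}bius ratio $-|C|$, which gives $e_{\pi/n}-e_{\pi\oplus_j n}=\sum_{\rho\le\pi}\sum_{C}|C|\,\mu(\hat 0,\rho)\,p_{\rho+_C n}$ in your notation; and so is the reformulation of $\equiv_n$ as equality after applying $\mathrm{Sym}=\sum_{w\in\fS_{n-1}}w\circ(\,\cdot\,)$, since the $\sim$-classes are exactly the orbits of relabelings of $[n-1]$ fixing $n$, and such relabelings act on both bases compatibly with~\eqref{eq:emulti}.

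Two genuine issues remain. First, you have changed the statement: the proposition takes $b$ to be the size of the block of $\pi$ containing $n-1$, while your proof uses the block $B$ containing $j$. Your normalization is the correct one, and the printed one is false for general $j$: for $\pi=12/3$, $j=1$, $n=4$ one computes $e_{12/3}\uparrow_1^4=p_{14/2/3}-p_{124/3}$ while $e_{12/3/4}-e_{124/3}=p_{14/2/3}+p_{1/24/3}-2p_{124/3}$, and after symmetrizing the second is exactly twice the first, so the constant is $\tfrac12$ (the size of the block of $j$), not $1$ (the size of the block of $n-1=3$). The two readings coincide precisely when the blocks of $j$ and of $n-1$ have equal size --- in particular for $j=n-1$, the only case the paper ever invokes --- but you should say explicitly that you are proving the corrected statement rather than substitute silently. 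Second, and more seriously, the final matching fails as phrased: it is not true for a fixed single refinement $\rho\le\pi$ that symmetrizing $b\,p_{\rho\oplus_j n}$ yields $\sum_C|C|\,\mathrm{Sym}(p_{\rho+_C n})$; if the sub-blocks of $B$ in $\rho$ have unequal sizes, these two expressions are supported on different $\fS_{n-1}$-orbits of power sums and cannot agree. The identity only holds after grouping refinements into orbits under $\fS_B$, the permutations of $B$ fixing everything else, which stabilize $\pi$ and permute its refinements while preserving $\mu(\hat 0,\cdot)$. The clean repair is to average over $\delta\in\fS_B$ and re-index $\rho'=\delta(\rho)$, so that $\sum_{\delta\in\fS_B}\delta\circ\bigl(e_\pi\uparrow_j^n\bigr)=(b-1)!\sum_{\rho'\le\pi}\mu(\hat 0,\rho')\sum_{x\in B}p_{\rho'\oplus_x n}=(b-1)!\bigl(e_{\pi/n}-e_{\pi\oplus_j n}\bigr)$, using that $\delta(j)$ sweeps $B$ uniformly together with your size-weighted expansion; applying $\mathrm{Sym}$ to both sides then turns the left-hand side into $b!\,\mathrm{Sym}\bigl(e_\pi\uparrow_j^n\bigr)$ and produces exactly the factor $1/b$. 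Your phrase ``orbit-by-orbit (equivalently stabilizer) count'' gestures at this, but the re-indexing over $\fS_B$ is the actual heart of the proof and needs to be written out.
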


The relabeling proposition considers how permuting vertex labels affects the chromatic symmetric function in $\NCSym$. Given  $\delta\in \fS_n$ and $f\in \NCSym$ define $\delta\circ f$ to be the function after we permute the placements of the variables, rather than the subscripts. For example, having $\delta = 213$ acting on $x_1x_2x_1$ means we switch the first two variables so $\delta \circ x_1x_2x_1=x_2x_1x_1$. Also define for a graph $G$ on vertices labeled with $[n]$ a new graph $\delta(G)$, which is $G$ but we permute the labels of the vertices. We similarly define $\delta(\pi)$ for $\pi\vdash[n]$ by permuting the elements in $[n]$. The following is Gebhard and Sagan's relabeling proposition. 
\begin{lemma}[Relabeling Proposition, Gebhard and Sagan~\cite{GS01} Proposition 3.3] For a graph $G$ with distinct vertex labels in $[n]$ and $\delta\in\fS_n$,
$$Y_{\delta(G)}=\delta\circ Y_G.$$
\label{prop.relabeling}
\end{lemma}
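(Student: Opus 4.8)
The plan is to prove the identity by constructing an explicit bijection between the proper colorings of $G$ and those of $\delta(G)$ that matches monomials term-by-term, after first unwinding what the operator $\delta \circ (-)$ does to the defining sum of $Y_G$. Writing the vertex of $G$ labeled $i$ in position $i$, the defining monomial of a proper coloring $\kappa$ is $x_{\kappa(1)}x_{\kappa(2)}\cdots x_{\kappa(n)}$, where position $i$ records the color of the vertex labeled $i$. Since $\delta \circ (-)$ permutes positions (the example $\delta = 213$ with $x_1 x_2 x_1 \mapsto x_2 x_1 x_1$ shows that position $i$ of the image carries the variable from position $\delta(i)$), I would first record that
$$\delta \circ Y_G = \sum_\kappa x_{\kappa(\delta(1))}x_{\kappa(\delta(2))}\cdots x_{\kappa(\delta(n))},$$
the sum being over all proper colorings $\kappa$ of $G$.

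Next I would set up the bijection. For a proper coloring $\kappa$ of $G$, define $\kappa' = \kappa \circ \delta$, i.e. $\kappa'(i) = \kappa(\delta(i))$ for each $i \in [n]$; this is the candidate coloring of $\delta(G)$. The crux is to verify that $\kappa'$ is a proper coloring of $\delta(G)$ exactly when $\kappa$ is a proper coloring of $G$. For this I would pin down the edge set of $\delta(G)$ directly from the definition of relabeling: an unordered pair $\{a,b\}$ is an edge of $\delta(G)$ if and only if $\{\delta(a),\delta(b)\}$ is an edge of $G$. Granting this, the requirement $\kappa'(a) \neq \kappa'(b)$ for every edge $\{a,b\}$ of $\delta(G)$ is literally the requirement $\kappa(\delta(a)) \neq \kappa(\delta(b))$ ranging over all edges $\{\delta(a),\delta(b)\}$ of $G$, which holds precisely when $\kappa$ is proper. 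Hence $\kappa \mapsto \kappa'$ restricts to a well-defined bijection from the proper colorings of $G$ onto those of $\delta(G)$, with inverse $\kappa' \mapsto \kappa' \circ \delta^{-1}$.

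Finally I would compare monomials. The monomial of $\kappa'$ in $Y_{\delta(G)}$ is $x_{\kappa'(1)} \cdots x_{\kappa'(n)} = x_{\kappa(\delta(1))} \cdots x_{\kappa(\delta(n))}$, which is exactly the summand indexed by $\kappa$ in the expression for $\delta \circ Y_G$ displayed above. Summing over the bijection then yields $Y_{\delta(G)} = \delta \circ Y_G$.

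I expect the only real obstacle to be bookkeeping with the two conventions: the direction in which $\delta \circ (-)$ shuffles positions and the direction in which $\delta$ relabels the vertices of $G$. These must be fixed so that they compose to the clean correspondence $\kappa'(i) = \kappa(\delta(i))$; a stray $\delta$ where a $\delta^{-1}$ belongs would break the term-by-term match. Once the edge characterization of $\delta(G)$ is stated precisely and the position-permutation convention is read off correctly from the worked example, the remainder of the argument is routine.
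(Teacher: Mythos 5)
Your proof is correct. Note that the paper offers no proof of this lemma at all---it is imported verbatim from Gebhard and Sagan (their Proposition 3.3)---so there is no internal argument to compare against; your direct bijective verification (transport a proper coloring $\kappa$ of $G$ to $\kappa' = \kappa\circ\delta$ on $\delta(G)$ and match monomials term by term) is essentially the standard argument and the one used in the original source. The only delicate point is the one you flag yourself: the paper's illustrating example $\delta = 213$ is a transposition, hence equal to its own inverse, so it genuinely cannot distinguish ``position $i$ of the image receives the variable from position $\delta(i)$'' from the same statement with $\delta^{-1}$, and likewise ``permute the labels of the vertices'' leaves the direction of relabeling unspecified. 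Your two conventions (variables pulled back along $\delta$, and $\{a,b\}\in E(\delta(G))$ iff $\{\delta(a),\delta(b)\}\in E(G)$) are mutually consistent, and under them the identity holds, just as it would under the opposite consistent pair; only a mixed reading would yield $Y_{\delta(G)} = \delta^{-1}\circ Y_G$ instead. Since you fix the conventions coherently and verify both the properness equivalence and the monomial match, the proof is complete.
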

We will also need a slight generalization of Gebhard and Sagan's result that easily follows from the relabeling proposition~\ref{prop.relabeling}.

\begin{lemma}[Gebhard and Sagan~\cite{GS01} Lemma 6.6] If $f\equiv_n g$ and $\delta\in \fS_n$ with $\delta(n)=n$ then
$$\delta\circ f \equiv_n \delta\circ g.$$
\end{lemma}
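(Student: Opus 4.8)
The plan is to reduce the statement to a combinatorial fact about how $\delta$ permutes the equivalence classes $(\pi)$ from~\eqref{eq:equivclass}, while exploiting the linearity of the operation $\delta \circ (-)$. First I would record how $\delta$ acts on the elementary basis. Working directly from the definitions of $\delta \circ (-)$ and of $e_\pi$ (equivalently, from the relabeling Proposition~\ref{prop.relabeling}), a short change-of-index computation on the defining sum of $e_\pi$ shows that $\delta$ permutes the elementary basis by
$$\delta \circ e_\pi = e_{\delta^{-1}(\pi)}.$$
Since $\delta \circ (-)$ is linear, writing $f = \sum_\pi c_\pi e_\pi$ gives $\delta \circ f = \sum_\tau c_{\delta(\tau)} e_\tau$ after reindexing $\tau = \delta^{-1}(\pi)$; in particular the coefficient of $e_\tau$ in $\delta \circ f$ is $c_{\delta(\tau)}$, and likewise for $g$.

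The heart of the argument is then to show that, when $\delta(n) = n$, the bijection $\tau \mapsto \delta(\tau)$ on set partitions of $[n]$ carries each equivalence class $(\tau)$ onto a single class, namely $(\delta(\tau))$. I would verify this by checking the two defining conditions of $\sim$. Condition (1), that $\lambda(\tau') = \lambda(\tau)$, is preserved by any $\delta \in \fS_n$, because relabeling is a bijection on $[n]$ and hence preserves the multiset of block sizes. Condition (2), concerning the size of the block containing $n$, is exactly where the hypothesis $\delta(n) = n$ is needed: since $\delta$ fixes $n$, it sends the block of $\tau'$ containing $n$ to the block of $\delta(\tau')$ containing $n$, and relabeling preserves that block's size. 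Thus $\tau' \sim \tau$ if and only if $\delta(\tau') \sim \delta(\tau)$, so $\delta$ restricts to a bijection $(\tau) \to (\delta(\tau))$.

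With these two ingredients in place, the conclusion follows by a reindexing of sums. For a fixed class $(\tau)$, the equality I must prove is $\sum_{\tau' \in (\tau)} c_{\delta(\tau')} = \sum_{\tau' \in (\tau)} d_{\delta(\tau')}$. Using the bijection $\tau' \mapsto \delta(\tau')$ from $(\tau)$ onto $(\delta(\tau))$, each side rewrites as a sum over the class $(\delta(\tau))$, namely $\sum_{\rho \in (\delta(\tau))} c_\rho$ and $\sum_{\rho \in (\delta(\tau))} d_\rho$, and these agree precisely because $f \equiv_n g$, applied to the class $(\delta(\tau))$. Hence $\delta \circ f$ and $\delta \circ g$ have equal class-sums of elementary coefficients for every class, which is the definition of $\delta \circ f \equiv_n \delta \circ g$.

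I expect the main obstacle to be pinning down the direction of the action on the basis ($e_{\delta^{-1}(\pi)}$ versus $e_{\delta(\pi)}$) and carrying that convention cleanly through the reindexing; the conceptual crux, by contrast, is the clean observation that condition (2) of $\sim$ survives relabeling exactly because $\delta(n) = n$ keeps the distinguished element $n$ in a block of unchanged size. Everything else is bookkeeping with linear combinations.
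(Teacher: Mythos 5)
Your proof is correct, but it is worth noting that the paper itself gives no proof of this lemma at all: it is quoted from Gebhard and Sagan (their Lemma 6.6) with only the remark that the stated generalization ``easily follows from the relabeling proposition,'' so your argument is a genuine, self-contained addition rather than a retracing of the paper. What your route buys is that it makes the two hidden ingredients explicit: first, that $\delta$ acts on the elementary basis of $\NCSym$ by permuting indices, $\delta\circ e_{\pi}=e_{\delta^{-1}(\pi)}$ (this is where one could alternatively invoke Proposition~\ref{prop.relabeling}, but a direct change of index in the defining sum of $e_\pi$ is just as fast); and second, that the equivalence classes of~\eqref{eq:equivclass} are stable under any relabeling fixing $n$, since block sizes are preserved by any $\delta\in\fS_n$ and the size of the block containing $n$ is preserved exactly because $\delta(n)=n$. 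Your treatment of the class-sum bookkeeping is also complete: you prove the ``if and only if'' form of class preservation (equivalently, one can apply the one-directional statement to both $\delta$ and $\delta^{-1}$, both of which fix $n$), which is what legitimizes rewriting $\sum_{\tau'\in(\tau)}c_{\delta(\tau')}$ as $\sum_{\rho\in(\delta(\tau))}c_{\rho}$. Finally, the direction ambiguity you flag ($e_{\delta(\pi)}$ versus $e_{\delta^{-1}(\pi)}$, which the paper's example $\delta=213$ cannot disambiguate because it is an involution) is indeed harmless: whichever convention is correct, the induced map on set partitions is a bijection fixing the distinguished element $n$, and your argument goes through verbatim.
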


\section{Formula in the elementary basis}
\label{sec:formulaSetUp}

In this section we develop a new formula for unit intervals graphs in the elementary basis in terms of signed combinatorial objects involving labeled arc diagrams with arc markings and vertex markings. The idea is that we will delete and contract our graph down to a single vertex and then induce the chromatic symmetric function on one vertex back until we get the full function of our unit interval graph. If we consider this inducing in terms of the power-sum basis we will arrive at an example of Stanley's broken-circuit theorem (\cite{Stan95} Theorem 2.9). Since our interest is in the elementary basis, we will use Gebhard and Sagan's~\cite{GS01} formula in Proposition~\ref{prop:gebsagInduce} for inducing elementary symmetric functions to a find a signed combinatorial interpretation of these coefficients. In Section~\ref{sec:TL} we will show in the case of triangular ladders that we can define a sign-reversing involution on our signed combinatorial objects, which will prove that  a new family of graphs, the triangular ladders, is $e$-positive.
We will develop the signed combinatorial formula in stages first determining a recursive formula for the chromatic symmetric function of unit interval graphs.

\begin{thm}
Given a unit interval graph on $n$ vertices with intervals $[a_1,1],[a_2,2],\ldots,[a_n,n]$ let $G'$ be the same graph on $n-1$ vertices after removing vertex $n$. Then
$$Y_G=Y_{G'}Y_{K_1}-\sum_{i=a_n}^{n-1}Y_{G'}\uparrow_i=Y_{G'}\uparrow_n^n-\sum_{i=a_n}^{n-1}Y_{G'}\uparrow_i^n.$$
\label{thm:Ginduce}
\end{thm}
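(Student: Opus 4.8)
The plan is to recover $Y_G$ from $Y_{G'}$ by reinstating vertex $n$ together with all of its incident edges and then peeling those edges off one at a time using the deletion--contraction recurrence of Proposition~\ref{prop.DeletionContraction}. First I would pin down the edges incident to $n$. Since $n$ is the largest vertex and every interval $[a_k,k]$ has $k\le n$, the only interval containing $n$ is $[a_n,n]$; hence the neighbours of $n$ are exactly $a_n,a_n+1,\dots,n-1$, and I write $\epsilon_i$ for the edge joining $i$ to $n$. The key structural observation is that $\{a_n,a_n+1,\dots,n\}$ all lie in the single interval $[a_n,n]$, so they form a clique in $G$; this is what will make the contraction terms collapse.

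Next I would set up the telescoping. Put $G_{n-1}=G$ and, for $k=n-1,n-2,\dots,a_n$, let $G_{k-1}=G_k\setminus\epsilon_k$ be the graph obtained by additionally deleting $\epsilon_k$, so $G_{a_n-1}$ is $G$ with every edge at $n$ removed, i.e.\ $G'$ together with $n$ as an isolated vertex. Applying Proposition~\ref{prop.DeletionContraction} to the edge $\epsilon_k$ of $G_k$ gives $Y_{G_k}=Y_{G_{k-1}}-Y_{G_k/\epsilon_k}\uparrow_k^n$, and summing these for $k$ from $n-1$ down to $a_n$ telescopes to
$$Y_G=Y_{G_{a_n-1}}-\sum_{k=a_n}^{n-1}Y_{G_k/\epsilon_k}\uparrow_k^n.$$
Because an isolated vertex contributes a free factor of $Y_{K_1}=p_1$, the definition $f\uparrow_n^n=f\,p_1$ yields $Y_{G_{a_n-1}}=Y_{G'}\uparrow_n^n=Y_{G'}Y_{K_1}$, which is the leading term and also reconciles the two forms of the right-hand side (using $\uparrow_i=\uparrow_i^n$).

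The crux, and the step I expect to be the main obstacle, is identifying each contraction $G_k/\epsilon_k$ with $G'$. In $G_k$ the neighbours of $n$ are precisely $a_n,\dots,k$, since $\epsilon_{k+1},\dots,\epsilon_{n-1}$ have been deleted. Contracting $\epsilon_k$ merges $n$ into vertex $k$; the only edges that could newly appear are from $k$ to the remaining neighbours $a_n,\dots,k-1$ of $n$. But $k$ and each of $a_n,\dots,k-1$ already lie together in $[a_n,n]$, so by the clique observation they are already adjacent in $G'$, and the contraction merely collapses multi-edges to these common neighbours while creating no new edges. Hence $G_k/\epsilon_k$ equals $G'$ as a labelled graph on $[n-1]$, so $Y_{G_k/\epsilon_k}=Y_{G'}$, and substituting into the telescoped identity gives exactly $Y_G=Y_{G'}Y_{K_1}-\sum_{i=a_n}^{n-1}Y_{G'}\uparrow_i$. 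In carrying this out I would be careful to confirm that deleting $\epsilon_{k+1},\dots,\epsilon_{n-1}$ leaves all edges among $[n-1]$ untouched, so that the neighbourhood of $k$ inside $G_k$ agrees with its neighbourhood in $G'$ and the identification of the merged vertex with vertex $k$ is genuine.
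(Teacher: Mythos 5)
Your proposal is correct and is essentially the paper's own argument: both peel the edges at vertex $n$ off one at a time via the deletion--contraction rule of Proposition~\ref{prop.DeletionContraction} and identify every contraction with $G'$ using the fact that $[a_n,n]$ induces a clique. The only difference is organizational --- the paper packages the telescoping as an induction on vertices and edges, deleting the longest edge $(a_n,n)$ first so that each intermediate graph remains a unit interval graph (which the inductive hypothesis requires), whereas you delete shortest-to-longest and compensate by verifying each contraction equals $G'$ directly, so you never need the intermediate graphs to be unit interval graphs.
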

\begin{proof}
We will prove this by inducting on $n$, the number of vertices, and $m$, the number of edges in a unit interval graph $G$ defined by the intervals $[a_1,1],[a_2,2],\ldots,[a_n,n]$. The base case is $K_1$ when $n=1$ and $m=0$. This case is easy to see since everything is equal to $e_1$. 

Now assume that $G$ is a unit interval graph on $n>1$ vertices with $m$ edges. We will assume that any unit interval graph $H$ with $\bar n\leq n$ vertices and $\bar m\leq m$ edges with either $\bar n<n$ or $\bar m<m$ satisfies the above formula. Define $G'$ to be the graph $G$, but we remove vertex $n$, so $G'$  satisfies the formula. 
If $a_n=n$ then $G$ is the disjoint union of $G'$ and $K_1$. It is not hard to see that $G$ satisfies the formula because  $Y_G=Y_{G'}Y_{K_1}$.   Say that instead $a_n<n$. By deletion-contraction in Proposition~\ref{prop.DeletionContraction} using the edge $\epsilon$ between vertices $a_n$ and $n$ we have
$$Y_G=Y_{G-\epsilon}-Y_{G/\epsilon}\uparrow_{a_n}^n.$$
Note that $G/\epsilon$ is $G'$. Also, note that $G-\epsilon$ is also a unit interval graph with all the same intervals as $G$, but the interval $[a_n,n]$ changes to $[a_n+1,n]$. If we remove vertex $n$ from $G-\epsilon$ then we get $G'$. Since $G-\epsilon$ has less edges than $G$  by induction we can say
$$Y_{G-\epsilon}=Y_{G'}Y_{K_1}-\sum_{i=a_n+1}^{n-1}Y_{G'}\uparrow_i^n.$$
Putting everything all together we get the equation in this proposition. 
\end{proof}

We will continually use the formula in Theorem~\ref{thm:Ginduce} until we are only inducing from $K_1$. This gives us 
\begin{equation}Y_G=\sum_{i_n=a_n}^{n}\cdots \sum_{i_3=a_3}^{3}\sum_{i_2=a_2}^{2}(-1)^{|\{i_j\neq j\}|}Y_{K_1}\uparrow_{i_2}^{2}\uparrow_{i_3}^{3}\cdots   \uparrow_{i_n}^{n}.
\label{eq:Ginduce}
\end{equation}
We will represent each series of inducings $\uparrow_{i_2}^{2}\uparrow_{i_3}^{3}\cdots   \uparrow_{i_n}^{n}$ with an arc diagram. An {\it arc diagram} is a drawing on $n$ vertices in a line numbered from left to right together with a collection of arcs $(i,j)$ with $i<j$ representing an edge from $i$ to $j$. 
A series of inducings like $\uparrow_{i_2}^{2}\uparrow_{i_3}^{3}\cdots   \uparrow_{i_n}^{n}$ will be represented by the arc diagram on $n$ vertices with arcs $(i_2,2),(i_3,3),\ldots, (i_n,n)$ where if $i_j=j$ there is no arc, but we may list non-arcs for notational ease.  Define an arc $(i,j)$  to be a {\it left arc} of $j$. The collection of arc diagrams just described for a unit interval graph $G$ with intervals $[a_1,1],[a_2,2],\ldots,[a_n,n]$ are those where
\begin{itemize}
\item all vertices have at most one left arc and
\item if we have an arc $(i,j)$ then $i,j\in [a_k,k]$ for some $k$. 
\end{itemize}
Define this set of arc diagrams  to be $\cA(G)$. Note that the sign in equation~\eqref{eq:Ginduce} is determined by $|\{j\neq i_j\}|$, which is precisely the number of arcs in the arc diagram. 
For an arc diagram $D\in \cA(G)$ define $a(D)$ to be the number of arcs in the arc diagram $D$. 
See Figure~\ref{fig:arcEx} for an example. 
We will re-represent the series of inducings $\uparrow_{i_2}^{2}\uparrow_{i_3}^{3}\cdots \uparrow_{i_n}^{n}$, which is associated to some arc diagram $D$, to be $\uparrow_D$. 

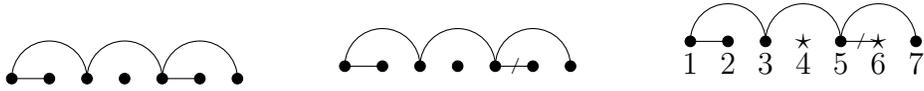
\begin{figure}
\begin{center}
\begin{tikzpicture}
\foreach \x in {0,...,1,2,3,4,5,6}
\filldraw[black] (\x/2,0) circle [radius=2pt];
\foreach \x in {}
\filldraw[black] (\x/2,0) node {$\star$};
\foreach \x in {0,4}{
\draw[black] (\x/2,0)--(\x/2+.5,0);}
\foreach \x in {2,4,6}{
\(\arc{(\x/2,0)} \)}
\foreach \x in {}{
\(\ticone{(\x/2,0)} \)}
\end{tikzpicture}
\hspace{1cm}
\begin{tikzpicture}
\foreach \x in {0,...,1,2,3,4,5,6}
\filldraw[black] (\x/2,0) circle [radius=2pt];
\foreach \x in {}
\filldraw[black] (\x/2,0) node {$\star$};
\foreach \x in {0,4}{
\draw[black] (\x/2,0)--(\x/2+.5,0);}
\foreach \x in {2,4,6}{
\(\arc{(\x/2,0)} \)}
\foreach \x in {4}{
\(\ticone{(\x/2,0)} \)}
\end{tikzpicture}
\hspace{1cm}
\begin{tikzpicture}
\foreach \x in {0,...,2,4,6}
\filldraw[black] (\x/2,0) circle [radius=2pt];
\foreach \x in {3,5}
\filldraw[black] (\x/2,0) node {$\star$};
\foreach \x in {0,4}{
\draw[black] (\x/2,0)--(\x/2+.5,0);}
\foreach \x in {2,4,6}{
\(\arc{(\x/2,0)} \)}
\foreach \x in {4}{
\(\ticone{(\x/2,0)} \)}
\draw (0.0,-.3) node {1};
\draw (0.5,-.3) node {2};
\draw (1.0,-.3) node {3};
\draw (1.5,-.3) node {4};
\draw (2.0,-.3) node {5};
\draw (2.5,-.3) node {6};
\draw (3.0,-.3) node {7};
\end{tikzpicture}
\end{center}
\caption{These are arc diagrams for the unit interval graph $G$ from intervals $[1,3],[2,4],[3,5],[4,6],[5,7]$. From left to right we have elements of $\cA(G)$, $\cA'(G)$ and $\cA'_L(G)$ where in all cases $a(D)=5$. On the left the associated set partition is $123567/4$ and the other two are $1235/4/67$. }
\label{fig:arcEx}
\end{figure}

\begin{prop}For a unit interval graph $G$,
$$Y_G=\sum_{D\in \cA(G)}(-1)^{a(D)}e_1\uparrow_D.$$
\label{prop:Ginduce2}
\end{prop}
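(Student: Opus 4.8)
The plan is to read the desired identity as nothing more than a reindexing of the formula \eqref{eq:Ginduce}, which is already available by iterating Theorem~\ref{thm:Ginduce} down to a single vertex. First I would substitute the base value $Y_{K_1}=e_1$ (the $n=1$ case recorded in the proof of Theorem~\ref{thm:Ginduce}) into \eqref{eq:Ginduce}, so that the right-hand side becomes
$$\sum_{i_n=a_n}^{n}\cdots\sum_{i_2=a_2}^{2}(-1)^{|\{j\,:\,i_j\neq j\}|}\,e_1\uparrow_{i_2}^{2}\uparrow_{i_3}^{3}\cdots\uparrow_{i_n}^{n}.$$
The entire task is then to match this nested sum term-by-term with $\sum_{D\in\cA(G)}(-1)^{a(D)}e_1\uparrow_D$.

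To do this I would exhibit the indexing bijection between the set of tuples $(i_2,i_3,\ldots,i_n)$ satisfying $a_j\le i_j\le j$ and the set $\cA(G)$: send such a tuple to the arc diagram $D$ on $[n]$ whose arcs are exactly $(i_j,j)$ for those $j$ with $i_j\neq j$, and send $D$ back to the tuple in which $i_j$ is the left endpoint of the (unique) left arc at $j$ when one is present and $i_j=j$ otherwise. These assignments are visibly inverse to one another. Under this correspondence the two quantities in each summand agree by the very definitions introduced before the statement: $\uparrow_D$ is defined to be $\uparrow_{i_2}^{2}\cdots\uparrow_{i_n}^{n}$, and $a(D)$, the number of arcs, equals $|\{j:i_j\neq j\}|$, so the powers of $-1$ coincide as well. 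The condition that every vertex of $D$ carries at most one left arc is automatic, since the tuple supplies exactly one index $i_j$ for each $j$.

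The point requiring genuine care — and the main obstacle — is checking that the remaining defining condition of $\cA(G)$, that each arc $(i,j)$ has $i,j\in[a_k,k]$ for some $k$, matches exactly the summation constraint $a_j\le i_j$; equivalently, that for $i<j$ the edge condition ``$i,j\in[a_k,k]$ for some $k$'' is the same as ``$a_j\le i$''. One inclusion is immediate by taking $k=j$. For the other I would invoke the monotonicity $a_1\le a_2\le\cdots\le a_n$ of the canonical representation: if $i,j\in[a_k,k]$ with $j\le k$, then $a_j\le a_k\le i$. This monotonicity is the standard consequence of the unit interval (indifference) property that $i<j<k$ with $i$ adjacent to $k$ forces $i$ adjacent to $j$, which makes the lower neighborhood of each vertex $j$ an interval $[a_j,j-1]$ whose left endpoints increase with $j$. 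Once this equivalence is in place the tuple set and $\cA(G)$ coincide, and the two sums are literally the same, proving the proposition.
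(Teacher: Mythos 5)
Your proof is correct and follows essentially the same route as the paper: the paper's own proof of Proposition~\ref{prop:Ginduce2} is exactly the one-line observation that equation~\eqref{eq:Ginduce}, with $Y_{K_1}=e_1$ substituted, becomes the stated sum once each tuple of inducings $(i_2,\ldots,i_n)$ is encoded as an arc diagram. The indexing bijection between tuples and diagrams in $\cA(G)$, and the verification (via monotonicity of the canonical representation) that the constraint $a_j\le i_j$ matches the arc condition defining $\cA(G)$, are precisely the details the paper leaves implicit, so your write-up is a fleshed-out version of the same argument.
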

\begin{proof}
This is equation~\eqref{eq:Ginduce}, but instead we represent the series of inducings as an arc diagram and use the fact that $Y_{K_1}=e_1$.
\end{proof}

Given any arc diagram $D\in\cA(G)$ we can use the connected components formed from the dots connected by arcs to form a set partition,  $\pi(D)$, if we make all the vertices in each connected component  a block. See Figure~\ref{fig:arcEx} for an example. 
One helpful fact about the series of inducings, which will be shown in the next lemma, is that 
$$p_1\uparrow_D=p_{\pi(D)}.$$

\begin{lemma}
For a unit interval graph $G$ and  arc diagram $D\in\cA(G)$ we have $p_1\uparrow_D=p_{\pi(D)}.$
As result, if $D_1$ and $D_2$ are two arc diagrams with $\pi(D_1)=\pi(D_2)$ then 
$$e_1\uparrow_{D_1}=e_1\uparrow_{D_2}.$$
\label{lem:anyOrder}
\end{lemma}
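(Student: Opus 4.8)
The plan is to reduce the second assertion to the first, and then to prove the first by a short induction on the number of vertices. First I would observe that $e_1$ and $p_1$ are literally the same element of $\NCSym$: both equal $\sum_i x_i$, since for a one-block set partition the defining conditions ($i_j\neq i_k$ for the elementary function, $i_j=i_k$ for the power-sum function) are vacuous. Because the operator $\uparrow_D$ is defined on monomials and extended linearly, the equality $e_1=p_1$ immediately gives $e_1\uparrow_D=p_1\uparrow_D$ for every arc diagram $D$. Hence once I establish $p_1\uparrow_D=p_{\pi(D)}$, the ``as a result'' statement follows at once: if $\pi(D_1)=\pi(D_2)$ then $e_1\uparrow_{D_1}=p_{\pi(D_1)}=p_{\pi(D_2)}=e_1\uparrow_{D_2}$.

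For the main claim, recall that $\uparrow_D$ unwinds as the composite $\uparrow_{i_2}^2\uparrow_{i_3}^3\cdots\uparrow_{i_n}^n$, and I would apply the power-sum inducing formula \eqref{eq.induced} one factor at a time. Setting $\pi_1=1$ (the one-block set partition of $[1]$) and $\pi_j=\pi_{j-1}\oplus_{i_j}j$ for $2\le j\le n$, equation \eqref{eq.induced} gives $p_{\pi_{j-1}}\uparrow_{i_j}^j=p_{\pi_j}$ at each stage, using the convention $\pi\oplus_j j=\pi/j$ in the no-arc case $i_j=j$. Composing, $p_1\uparrow_D=p_{\pi_n}$, so it only remains to identify $\pi_n$ with $\pi(D)$.

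To do this I would prove, by induction on $j$, that $\pi_j$ equals the connected-component set partition of the arc diagram $D_j$ obtained by keeping only the vertices $[j]$ and the arcs $(i_2,2),\ldots,(i_j,j)$ whose right endpoint is at most $j$. The base case $j=1$ is the isolated vertex $1$. For the inductive step, every arc points from a smaller vertex to a larger one, so in passing from $D_{j-1}$ to $D_j$ we adjoin only the new vertex $j$ together with its unique left arc (if any). If there is no left arc ($i_j=j$), then $j$ is isolated in $D_j$ and $\pi_j=\pi_{j-1}/j$ adds the singleton $\{j\}$, matching the components. If the left arc $(i_j,j)$ is present, then $j$ is joined to the component of $i_j$ and to nothing else, so the components of $D_j$ are those of $D_{j-1}$ with $j$ adjoined to the block of $i_j$, which is exactly $\pi_{j-1}\oplus_{i_j}j=\pi_j$. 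Taking $j=n$ yields $\pi_n=\pi(D)$.

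The only point requiring care is this inductive step: one must use that arcs are indexed by their right endpoints and are processed in increasing order, so that at stage $j$ the freshly added vertex $j$ is a genuine leaf attached to an already-assembled component. This is precisely what makes the one-block-at-a-time merging $\pi_{j-1}\oplus_{i_j}j$ coincide with the global connected-component partition, and it is the single structural fact on which the argument rests; everything else is a mechanical application of \eqref{eq.induced} together with the identity $e_1=p_1$.
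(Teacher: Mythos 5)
Your proof is correct and follows essentially the same route as the paper's: both arguments induct on the number of vertices, apply equation~\eqref{eq.induced} (with the convention $p_{\pi}\uparrow_j^j=p_{\pi/j}$ for the no-arc case) to handle the newest vertex, observe that this vertex attaches only through its single left arc, and deduce the $e_1$ statement from the identity $e_1=p_1$. The only cosmetic difference is direction — you build the diagram up vertex by vertex while the paper peels off vertex $n$ — but the inductive content is identical.
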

\begin{proof}
We will first prove that for an arc diagram $D$ associated to a unit interval graph $G$ with $n$ vertices that $p_1\uparrow_D=p_{\pi(D)}$ by inducting on $n$. The rest of the statement will follow from this fact. 

If $n=1$ then the only possible arc diagram is a single dot with no arcs and $\pi(D)=1$ so $p_{\pi(D)}=p_1$. Say that $n>1$. Consider an arc diagram $D$ with arcs $(i_2,2),(i_3,3),\ldots, (i_n,n)$. Let $\bar D$ be the arc diagram $D$, but with $n$ removed. By induction $p_1\uparrow_{\bar D}=p_{\pi(\bar D)}$. Then
$$p_1\uparrow_D=p_1\uparrow_{i_2}^{2}\uparrow_{i_3}^{3}\cdots \uparrow_{i_n}^{n}=p_{\pi(\bar D)}\uparrow_{i_n}^n.$$
If $n$ is in its own connected component then $D$ has no left arc at $n$ so $i_n=n$. Then $\pi(D)=\pi(\bar D)/n$ and we know that $p_{\pi(\bar D)}\uparrow_{i_n}^n=p_{\pi(\bar D)}p_1=p_{\pi(\bar D)/n}.$
Consider the case where $n$ is not in its own component in $\pi(D)$. That means $i_n<n$ and $n$ is in the same connected component as $i_n$. Further $\pi(D)=\pi(\bar D)\oplus_{i_n} n$. Similarly and using equation~\eqref{eq.induced}, 
$$p_1\uparrow_D=p_1\uparrow_{i_2}^{2}\uparrow_{i_3}^{3}\cdots \uparrow_{i_n}^{n}=p_{\pi(\bar D)}\uparrow_{i_n}^n=p_{\pi(\bar D)\oplus_{i_n} n}=p_{\pi(D)}.$$
Because $p_1\uparrow_D=p_{\pi(D)}$ we know that if $D_1$ and $D_2$ are two arc diagrams with $\pi(D_1)=\pi(D_2)$ then $p_1\uparrow_{D_1}=p_1\uparrow_{D_1}$. Because $p_1=e_1$ we have the rest of our result. 
\end{proof}

We will note that the formula in Proposition~\ref{prop:Ginduce2} is not particularly surprising, because if we instead used $Y_{K_1}=p_1$ and induced in the power-sum basis then we arrive at an example of Stanley's broken-circuit theorem \cite{Stan95}. Since we are particularly interested in  the elementary basis we will be using Gebhard and Sagan's formula in Proposition~\ref{prop:gebsagInduce} for inducing elementary symmetric functions, which will require us to semi-symmetrize the chromatic symmetric function. 

Because of Lemma~\ref{lem:anyOrder} we can define $e_1\uparrow_{\pi}$ for any set partition $\pi$ to be equal to $e_1\uparrow_D$ for any arc diagram $D$ with $\pi(D)=\pi$. Our method will be to continually use Gebhard and Sagan's inducing formula in the elementary basis to derive a signed combinatorial formula for a semi-symmetrized $Y_G$, which is distinct from the broken circuit theorem. First we must clarify exactly how we can continually use Gebhard and Sagan's inducing formula. Gebhard and Sagan prove this in their paper, but we mention the proof again because it is the backbone of our logic. 
\begin{lemma}[Gebhard and Sagan~\cite{GS01} Lemma 6.2]
For $f,g\in\NCSym$ if $f\equiv_{n-1} g$ then $f\uparrow_{n-1}^n\equiv_n g\uparrow_{n-1}^n.$
\label{lem:consecuative_induce}
\end{lemma}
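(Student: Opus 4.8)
The plan is to exploit the linearity of both the relation $\equiv_{n-1}$ and the inducing operation $\uparrow_{n-1}^n$, reducing the statement to a single implication. Since $f \equiv_{n-1} g$ is the same as $h := f-g \equiv_{n-1} 0$, and since $\uparrow_{n-1}^n$ is defined linearly, it suffices to show that $h \equiv_{n-1} 0$ implies $h \uparrow_{n-1}^n \equiv_n 0$. Expanding $h = \sum_{\pi \vdash [n-1]} c_\pi e_\pi$, the hypothesis says exactly that $\sum_{\pi \in (\sigma)} c_\pi = 0$ for every $\equiv_{n-1}$-equivalence class $(\sigma)$.

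Next I would apply Gebhard and Sagan's inducing formula, Proposition~\ref{prop:gebsagInduce}, term by term. Writing $b(\pi)$ for the size of the block of $\pi$ containing $n-1$, linearity gives
$$h \uparrow_{n-1}^n \;\equiv_n\; \sum_{\pi \vdash [n-1]} \frac{c_\pi}{b(\pi)}\bigl(e_{\pi/n} - e_{\pi \oplus_{n-1} n}\bigr).$$
The crux, and the step I expect to require the most care, is to check that the two maps $\pi \mapsto \pi/n$ and $\pi \mapsto \pi \oplus_{n-1} n$ are compatible with the equivalence classes. Fix one $\equiv_{n-1}$-class $C$. By the very definition of the classes, all $\pi \in C$ share a common $\lambda(\pi)$ and a common size of the block containing $n-1$, so $b(\pi) = b_C$ is constant on $C$ and the scalar factors out. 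Furthermore, for $\pi, \pi' \in C$ one verifies directly that $\pi/n$ and $\pi'/n$ lie in a common $\equiv_n$-class (each has the singleton block $\{n\}$, and $\lambda(\pi/n) = \lambda(\pi) \cup \{1\} = \lambda(\pi'/n)$), and that $\pi \oplus_{n-1} n$ and $\pi' \oplus_{n-1} n$ lie in a common $\equiv_n$-class (each has a block of size $b_C+1$ containing $n$, with $\lambda$ obtained from the common $\lambda(\pi)$ by replacing a part $b_C$ with $b_C+1$). Call these two $\equiv_n$-classes $\phi(C)$ and $\psi(C)$; they are distinct because the block containing $n$ has size $1$ in the first and size $b_C + 1 \geq 2$ in the second.

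Finally I would collect coefficient sums class by class. The contribution of a fixed $C$ to the $\equiv_n$-coefficient-sum of $\phi(C)$ is $\frac{1}{b_C}\sum_{\pi \in C} c_\pi = 0$, its contribution to $\psi(C)$ is $-\frac{1}{b_C}\sum_{\pi \in C} c_\pi = 0$, and it contributes nothing to any other $\equiv_n$-class. Summing over all classes $C$, every $\equiv_n$-class receives total coefficient-sum zero, so the right-hand side above is $\equiv_n 0$; by transitivity of $\equiv_n$ we conclude $h \uparrow_{n-1}^n \equiv_n 0$, and therefore $f \uparrow_{n-1}^n \equiv_n g \uparrow_{n-1}^n$. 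The only real work lies in the compatibility bookkeeping of the middle paragraph; granting that, the cancellation is automatic.
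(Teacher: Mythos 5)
Your proposal is correct and takes essentially the same route as the paper's proof: both arguments rest on Proposition~\ref{prop:gebsagInduce} together with the key compatibility facts that $\pi\sim\sigma$ forces $b(\pi)=b(\sigma)$, $\pi/n\sim\sigma/n$, and $\pi\oplus_{n-1}n\sim\sigma\oplus_{n-1}n$, and then conclude by linearity. The only difference is presentational — you unpack the paper's ``extending this linearly gives the result'' into explicit class-by-class coefficient-sum bookkeeping, whereas the paper verifies the claim on the generators $e_\pi-e_\sigma$, $\pi\sim\sigma$, and extends.
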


\begin{proof}
Note that for $\pi,\sigma\vdash[n-1]$ if $\pi\sim \sigma$ then $\pi/n\sim \sigma/n$ and $\pi\oplus_{n-1}n\sim \sigma\oplus_{n-1}n$. This means if $e_\pi\equiv_{n-1}e_{\sigma}$ then $e_{\pi/n}\equiv_{n}e_{\sigma/n}$ and
$e_{\pi\oplus_{n}n}\equiv_{n-1}e_{\sigma\oplus_{n-1}n}$, which implies that $e_{\pi}\uparrow_{n-1}^n\equiv_n e_{\sigma}\uparrow_{n-1}^n$. Extending this linearly gives the result. 
\end{proof}

Note that when we induce an elementary symmetric function once, it is equivalent to the subtraction of two elementary symmetric functions after semi-symmetrizing. Since each inducing is associated to an arc in an arc diagram, we will need to keep track of these two possible terms for each inducing. We will do so by marking arcs with tic marks. 
Define $\cA'(G)$ to be the collection of arc diagrams $D\in \cA(G)$, but each arc will be decorated with a tic mark or left alone. See Figure~\ref{fig:arcEx} for an example.  We will consider each tic mark on arc $(i,j)$ to split the connected component into {\it pieces}, every dot to the left of $j$, but not including $j$, will be in a different piece then those to the right including $j$. For $D'\in \cA'(G)$ we define $\pi(D')$ to be the set partition formed by all these pieces the connected components  are broken into. We will also define $t(D')$ to be the number of tic marks on the diagram $D'$. For an arc diagram $D\in \cA(G)$ we define ${\cal T}(D)$ to be all $D'\in\cA'(G)$ but the underlying arc diagram is $D$ only. 

To get a formula for inducing $e_1$ in terms of elementary symmetric functions we will first consider the simple arc diagram $P_n$ on $n$ vertices with arcs $(1,2),(2,3),\ldots, (n-1,n)$. Each $D'\in \cA'(G)$ has an associated set partition $\pi(D')$, but each connected component also has an associated integer composition $\alpha=\alpha_1+\alpha_2+\cdots+\alpha_{l}\models n$ reading the sizes of the pieces between tic marks in one connected component in $D'$ from left to right. Where $\alpha=\alpha_1+\alpha_2+\cdots+\alpha_{l}\models n$ is an {\it integer composition} of $n$ if $\alpha_i\geq 1$ for all $i$ and the sum of the $\alpha_i$ is $n$. 

\begin{prop}For the set partition $[n]$ we have
$$e_1\uparrow_{[n]}\equiv_n\frac{1}{n!}\sum_{D'\in{\cal T}(P_n)}(-1)^{a(D')-t(D')}\alpha_{l}\binom{n}{\alpha_1,\ldots, \alpha_{l}}e_{\pi(D')}$$
where $\alpha=\alpha_1+\alpha_2+\cdots+\alpha_{l}\models n$ is the composition associated to the pieces of $D'$.
\label{prop:e_induce[n]} 
\end{prop}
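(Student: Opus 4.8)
The plan is to prove this by induction on $n$, building up the path arc diagram one arc at a time and applying Gebhard and Sagan's inducing formula (Proposition~\ref{prop:gebsagInduce}) exactly once per arc. Since the single connected component of $P_n$ gives $\pi(P_n)=[n]$, Lemma~\ref{lem:anyOrder} lets me identify $e_1\uparrow_{[n]}$ with $e_1\uparrow_{P_n}=e_1\uparrow_1^2\uparrow_2^3\cdots\uparrow_{n-1}^n$, so I may work concretely with the path. The base case $n=1$ is immediate: ${\cal T}(P_1)$ has the single arcless, ticless diagram, for which $a(D')=t(D')=0$, the composition is $\alpha=(1)$, and $e_{\pi(D')}=e_1$, so the right-hand side is $e_1=e_1\uparrow_{[1]}$.

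For the inductive step I would write $e_1\uparrow_{P_n}=(e_1\uparrow_{P_{n-1}})\uparrow_{n-1}^n$, substitute the formula for $e_1\uparrow_{P_{n-1}}$ given by the inductive hypothesis, and push $\uparrow_{n-1}^n$ through the sum, which is legitimate at the level of $\equiv_n$ by Lemma~\ref{lem:consecuative_induce}. Fix $D'\in{\cal T}(P_{n-1})$ with composition $\beta=(\beta_1,\dots,\beta_{l'})\models n-1$. Because $n-1$ is the rightmost vertex of the one connected component of $P_{n-1}$, it lies in the last piece, so the block of $\pi(D')$ containing $n-1$ has size $b=\beta_{l'}$. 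Applying Proposition~\ref{prop:gebsagInduce} with $j=n-1$ gives $e_{\pi(D')}\uparrow_{n-1}^n\equiv_n\frac{1}{\beta_{l'}}\bigl(e_{\pi(D')/n}-e_{\pi(D')\oplus_{n-1}n}\bigr)$, and the two summands are exactly the set partitions of the two diagrams in ${\cal T}(P_n)$ obtained by appending the arc $(n-1,n)$ to $D'$ and either tic-marking it (creating a singleton piece, matching $\pi(D')/n$) or not (growing the last piece by one, matching $\pi(D')\oplus_{n-1}n$). Deleting the final arc is inverse to this, so it sets up a bijection ${\cal T}(P_n)\leftrightarrow{\cal T}(P_{n-1})\times\{\text{tic},\text{no tic}\}$ under which the sum over ${\cal T}(P_n)$ decomposes as the sum over ${\cal T}(P_{n-1})$ paired with the two choices from Proposition~\ref{prop:gebsagInduce}.

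The remaining work is to check that the two resulting coefficients agree with those predicted by the $n$-level formula. In the tic-marked case the new piece has size $1$, so the new last part is $\alpha_l=1$ and the multinomial gains a factor of $n$ over $\binom{n-1}{\beta_1,\dots,\beta_{l'}}$; this factor of $n$ combines with the ambient $\frac{1}{n!}$ to produce $\frac{1}{(n-1)!}$, the weight $\frac{1}{\beta_{l'}}$ from Proposition~\ref{prop:gebsagInduce} cancels the $\beta_{l'}$ supplied by the hypothesis, and the sign is unchanged because both $a(D')$ and $t(D')$ increase by $1$. In the non-tic case the last piece grows to $\beta_{l'}+1$, so $\alpha_l=\beta_{l'}+1$, and the elementary identity $(\beta_{l'}+1)/(\beta_{l'}+1)!=1/\beta_{l'}!$ reconciles the two multinomials; here only $a(D')$ increases, so the sign flips, matching the minus sign in Proposition~\ref{prop:gebsagInduce}. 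I expect this final coefficient bookkeeping—tracking the $\alpha_l$ factor, the multinomial, the $\frac{1}{b}$ weight, and the sign $(-1)^{a(D')-t(D')}$ simultaneously across both cases—to be the only real obstacle; everything else is a direct unwinding of the definitions and the two cited lemmas.
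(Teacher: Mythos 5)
Your proposal is correct and follows essentially the same route as the paper's own proof: induction on $n$, writing $e_1\uparrow_{[n]}=e_1\uparrow_{[n-1]}\uparrow_{n-1}^n$, pushing the induce through the sum via Lemma~\ref{lem:consecuative_induce}, applying Proposition~\ref{prop:gebsagInduce} with $b=\beta_{l'}$, and identifying the two resulting terms with the tic/no-tic choices on the new arc $(n-1,n)$. The only difference is that you carry out explicitly the coefficient bookkeeping that the paper compresses into ``manipulating the signs, factorial and multinomial coefficient appropriately,'' and your accounting of both cases is accurate.
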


\begin{proof}
We will prove this by inducting on $n$. If $n=1$ then $e_1\uparrow_{[1]}=e_1$ because $\uparrow_{[1]}$ means no inducing and we are done, so assume that $n>1$. By induction we know that 
$$e_1\uparrow_{[n-1]}\equiv_{n-1}\frac{1}{(n-1)!}\sum_{D'\in{\cal T}(P_{n-1})}(-1)^{a(D')-t(D')}\alpha_{l}\binom{n-1}{\alpha_1,\ldots, \alpha_{l}}e_{\pi(D')}$$
where $\alpha=\alpha_1+\alpha_2+\cdots+\alpha_{l}\models n$ is the composition associated to the pieces of $D'$.
By Lemma~\ref{lem:consecuative_induce}  this implies that
\begin{align*}
e_1\uparrow_{[n]}&=e_1\uparrow_{[n-1]}\uparrow_{n-1}^n\\
&\equiv_{n-1}\frac{1}{(n-1)!}\sum_{D'\in{\cal T}(P_{n-1})}(-1)^{a(D')-t(D')}\alpha_{l}\binom{n-1}{\alpha_1,\ldots, \alpha_{l}}e_{\pi(D')}\uparrow_{n-1}^n\\
&\equiv_{n-1}\frac{1}{(n-1)!}\sum_{D'\in{\cal T}(P_{n-1})}(-1)^{a(D')-t(D')}\alpha_{l}\binom{n-1}{\alpha_1,\ldots, \alpha_{l}}\frac{1}{\alpha_{l}}\left( e_{(\pi(D')/n)}-e_{(\pi(D')\oplus_{n-1}n)}\right).
\end{align*}
We will let the terms with $\pi(D')/n$ and $\pi(D')\oplus_{n-1}n$ be associated to $D'\in \cA'(P_n)$ with a tic mark on arc $(n-1,n)$ or no tic mark on arc $(n-1,n)$ respectively. Manipulating the signs, factorial and multinomial coefficient appropriately gives us the result. 
\end{proof}

We will use the formula for $e_1\uparrow_{[n]}$ that we just derived to find the formula for $e\uparrow D$ for a general arc diagram $D$. 

\begin{prop}For unit interval graph $G$ and  arc diagram $D\in \cA(G)$ we have
$$e_1\uparrow_{D}\equiv_n\frac{1}{n!}\sum_{D'\in{\cal T}(D)}(-1)^{a(D')-t(D')}
\prod_i\alpha^{(i)}_{l_i}\binom{n}{\alpha^{(1)}_1,\ldots, \alpha^{(1)}_{l_1},\alpha^{(2)}_1,\ldots, \alpha^{(2)}_{l_2},\ldots}e_{\pi(D')}$$
where $\alpha^{(i)}=\alpha^{(i)}_1+\alpha^{(i)}_2+\cdots+\alpha^{(i)}_{l_i}\models n$ are the compositions associated to the pieces of connected components of $D'$ where $\alpha^{(i)}_{l_i}$ is the size of the right-most piece. 
\label{prop:arc_tic_formula}
\end{prop}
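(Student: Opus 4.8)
The plan is to build the general formula out of the single-block formula of Proposition~\ref{prop:e_induce[n]} by exploiting that both sides factor over the connected components of $D$. Write $\pi(D)=B_1/\cdots/B_r$ with $|B_i|=n_i$, so $\sum_i n_i=n$. At the level of coefficients everything already factors: the sign is $(-1)^{a(D')-t(D')}=\prod_i(-1)^{a_i-t_i}$ since arcs and tics are partitioned among the components, the product $\prod_i\alpha^{(i)}_{l_i}$ is manifestly a product over components, and the single global multinomial splits as $\frac{1}{n!}\binom{n}{\alpha^{(1)}_1,\ldots}=\prod_i\frac{1}{n_i!}\binom{n_i}{\alpha^{(i)}_1,\ldots,\alpha^{(i)}_{l_i}}$ because $\binom{n}{\cdots}=\frac{n!}{\prod_i n_i!}\prod_i\binom{n_i}{\alpha^{(i)}}$. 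Finally $e_{\pi(D')}$ is the product of the elementary functions of the pieces in each component by multiplicativity of the elementary basis. Hence the claimed right-hand side is exactly the product over $i$ of the right-hand sides of Proposition~\ref{prop:e_induce[n]} for the blocks $B_i$, and it remains to show that $e_1\uparrow_D$ factors the same way.

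The engine for this is a multiplicativity of the inducing operation over disjoint unions of arc diagrams. If $D=D_1\sqcup D_2$, where $D_1$ occupies the initial labels $\{1,\ldots,m\}$ and $D_2$ the final labels $\{m+1,\ldots,n\}$ with no arc between them, then from the monomial identity $(uv)\uparrow_j^{n}=u\,(v\uparrow_{j-m}^{\,n-m})$ for $j>m$ — together with $Y_{K_1}=e_1$ and Lemma~\ref{lem:anyOrder} — one gets $e_1\uparrow_{D}=(e_1\uparrow_{D_1})(e_1\uparrow_{D_2})$. Applying this repeatedly peels off one block at a time, and since by Lemma~\ref{lem:anyOrder} each connected component can be replaced by a path on its vertices without changing $e_1\uparrow$, Proposition~\ref{prop:e_induce[n]} evaluates each factor. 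The $\equiv$-bookkeeping is harmless here: because the block containing the global maximum $n$ sits in the last factor, equality of each factor up to $\equiv_{n_i}$ forces equality of the product up to $\equiv_n$ (the same $\lambda$ throughout, and the size of the block of $n$ is pinned down by the last factor). This proves the formula whenever the blocks of $\pi(D)$ occupy consecutive intervals of labels.

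The main obstacle is the general, \emph{interleaved}, case, where such $D_1\sqcup D_2$ splits are unavailable: building $D$ vertex by vertex, the final step is $\uparrow_{i_n}^{n}$ with $i_n$ the largest label below $n$ in the block of $n$, and when $n-1$ lies in a different block we have $i_n\neq n-1$, so Lemma~\ref{lem:consecuative_induce} (exactly what keeps the $\equiv$-class well defined under inducing) cannot be applied. I would resolve this by relabeling. Choose $\delta\in\fS_n$ with $\delta(n)=n$ carrying $\pi(D)$ to a set partition $\delta(\pi(D))$ whose blocks are consecutive intervals with the block of $n$ placed last; such a $\delta$ exists because $n$ may be sent to the top of its interval. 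The consecutive case above establishes the formula for $\delta(\pi(D))$, and applying $\delta^{-1}$, which also fixes $n$, transports it back: the left-hand side is relabeling-covariant, $\delta\circ(e_1\uparrow_{\pi})=e_1\uparrow_{\delta(\pi)}$ (via the relabeling Proposition~\ref{prop.relabeling} and $\delta\circ e_{\pi}=e_{\delta(\pi)}$), and the right-hand side is covariant because cuts sit at vertex positions, so compositions, pieces and each $\pi(D')$ are permuted consistently. Since $\delta(n)=n$, Gebhard and Sagan's Lemma~6.6 guarantees these covariances respect $\equiv_n$, and the identity descends to $\pi(D)$.

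The delicate point throughout is the coefficient bookkeeping rather than the structure: one must check that under each elementary step the signs, the rightmost-piece factors $\alpha^{(i)}_{l_i}$, and the multinomials update consistently — for instance the appearance of a new singleton component multiplies by $e_1$ and is absorbed by $\binom{n}{\ldots,1}=n\binom{n-1}{\ldots}$, while extending the last component by an arc $(n-1,n)$ uses Proposition~\ref{prop:gebsagInduce} with $b=\alpha^{(r)}_{l_r}$, exactly cancelling the rightmost-piece factor and splitting into the tic/no-tic terms with the correct signs, precisely as in the proof of Proposition~\ref{prop:e_induce[n]}. Equivalently, one could run the whole argument as a single induction on $n$ that peels vertex $n$, with three subcases ($n$ isolated; $n-1$ in the block of $n$; and the interleaved case reduced by the relabeling above), but I expect the multiplicativity-plus-relabeling organization to be the cleanest to write.
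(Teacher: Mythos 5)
Your proposal is correct and takes essentially the same route as the paper's proof: both first establish the formula for block-consecutive partitions $[n_1]|[n_2]|\cdots|[n_k]$ by factoring $e_1\uparrow$ over the components and applying Proposition~\ref{prop:e_induce[n]}, then transfer to the general interleaved case via a relabeling $\delta$ with $\delta(n)=n$ (using the relabeling proposition and Gebhard--Sagan's Lemma 6.6), with Lemma~\ref{lem:anyOrder} reducing everything to dependence on $\pi(D)$ alone. Your explicit handling of the $\equiv_n$ bookkeeping under multiplication (keeping the block of $n$ in the last factor) is a detail the paper leaves implicit, but the underlying argument is the same.
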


\begin{proof} 
We will start the proof by considering the particular set partition $\pi=[{n_1}]|[{n_2}]|\cdots|[{n_{k}}]$ and prove this formula for $e_1\uparrow_{\pi}$. Then we will use the multiplicity of the power-sum basis and the formula for inducing the power-sum basis as well as the relabeling proposition~\ref{prop.relabeling} to show the formula for some particular arc diagram $D$ associated to $\pi$ before finally concluding this formula for a generic arc diagram $D$. 

First consider $\pi=[{n_1}]|[{n_2}]|\cdots|[{n_{k}}]$. Using the fact that  the elementary basis is multiplicative and Proposition~\ref{prop:e_induce[n]} we have that 
\begin{align*}
e_1\uparrow_{\pi}&=e_1\uparrow_{[{n_1}]}e_1\uparrow_{[{n_2}]}\cdots e_1\uparrow_{[{n_k}]}\\
&\equiv_n\prod_{j=1}^k\frac{1}{n_j!}\sum_{D'\in{\cal T}(P_{n_j})}(-1)^{a(D')-t(D')}\alpha_{l}\binom{n_j}{\alpha_1,\ldots, \alpha_{l}}e_{\pi(D')}\\
&\equiv_n\sum_{D'\in{\cal T}(P_{n_1}|P_{n_2}|\cdots|P_{n_k})}\prod_{j=1}^k\frac{1}{n_j!}(-1)^{a(D_j')-t(D_j')}\alpha^{(j)}_{l_j}\binom{n_j}{\alpha^{(j)}_1,\ldots, \alpha^{(j)}_{l_j}}e_{\pi(D_j')}\\
&\equiv_n\frac{1}{n!}\sum_{D'\in{\cal T}(P_{n_1}|P_{n_2}|\cdots|P_{n_k})}(-1)^{a(D')-t(D')}
\prod_i \alpha^{(i)}_{l_i}\binom{n}{\alpha^{(1)}_1,\ldots, \alpha^{(1)}_{\ell_1},\alpha^{(2)}_1,\ldots, \alpha^{(2)}_{l_2},\ldots}e_{\pi(D')}
\end{align*}
where we define for two diagrams $D_1$ on $n$ vertices and $D_2$ on $m$ vertices the diagram $D_1|D_2$ to be a diagram on $m+n$ vertices, which is isomorphic to $D_1$ on the first $n$ vertices and isomorphic to $D_2$ on the last $m$ vertices with no other arcs. 
Next we will consider a general set partition $\pi\vdash [n]$. We form an arc diagram $D$ such that we have an arc $(a,b)$ if $a$ and $b$ are listed consecutively in increasing order in a block of $\pi$. Also, there exists a permutation $\delta\in \fS_n$ such that $\delta(n)=n$ and if $(a,b)$ is an arc of $D$ then $\delta(a)=\delta(b)+1$. This makes $\delta(\pi)=[{n_1}]|[{n_2}]|\cdots|[{n_{k}}]$ for some $n_j$'s and $\delta(D)=P_{n_1}|P_{n_2}|\cdots|P_{n_k}$ where we permute the placement of the dots. In Lemma~\ref{lem:anyOrder} we showed that $e_1\uparrow_{D}=p_{\pi}$.
We know that 
\begin{align*}
\delta(p_{\pi})
&=p_{[n_1]}p_{[n_2]}\cdots p_{[n_k]}\\
&=e_1\uparrow_{[n_1]}e_1\uparrow_{[n_2]}\cdots e_1\uparrow_{[n_k]}\\
&\equiv_n\frac{1}{n!}\sum_{D'\in{\cal T}(P_{n_1}|P_{n_2}|\cdots|P_{n_k})}(-1)^{a(D')-t(D')}
\prod_i \alpha^{(i)}_{l_i}\binom{n}{\alpha^{(1)}_1,\ldots, \alpha^{(1)}_{l_1},\alpha^{(2)}_1,\ldots, \alpha^{(2)}_{l_2},\ldots}e_{\pi(D')}.
\end{align*}
Note that $\delta^{-1}(p_{\pi})=e_1\uparrow_D$. By the relabeling proposition~\ref{prop.relabeling} and taking $\delta^{-1}$ of the first and last part of the series of equivalences above we get the result for this special diagram $D$ associated to set partition $\pi$. 

Given any arc diagram $D\in \cA(g)$ there is a special arc diagram 
$D^*$ associated to $\pi(D)$ as we defined earlier. 
However, since we have already shown that $e_1\uparrow_{D}=e_1\uparrow_{D^{*}}$ 
when $\pi(D)=\pi(D^*)$ in Lemma~\ref{lem:anyOrder} we are done. 
\end{proof}

We are finally ready to introduce the signed combinatorial formula for a semi-symmetrized $Y_G$. The idea is to used tic'd arc diagrams and reinterpret the multinomial coefficient in Proposition~\ref{prop:arc_tic_formula} as labels on the vertices with some additional vertex markings. Given a unit interval graph $G$ define $\cA'_L(G)$ to be the collection of all arc diagrams in $\cA'(G)$ with a possible tic mark on each arc as well as a permutation label $\delta\in\fS_n$ on the vertices with $\delta(i)$ on vertex $i$. We want these labels to be increasing on each piece. Also, on each connected component we will mark one vertex in the most-right piece. We will use a star instead of a dot to show the vertex is marked. Meaning for an arc diagram $D'$ with possible tic marks that if $B=\{b_1,b_2,\ldots, b_k\}$ with $b_1<b_2<\cdots <b_k$ is a block of  $\pi(D')$   associated to a piece, then the permutation must have $\delta(b_1)<\delta(b_2)<\cdots <\delta(b_k)$. If this block was a right-most piece of a connected component then one of the vertices in $B$ is marked with a star.  See Figure~\ref{fig:arcEx} for an example.  Our signed combinatorial formula is as follows. 

\begin{thm}
For a unit interval graph $G$,
$$Y_G\equiv_n \frac{1}{n!}\sum_{D'\in \cA'_L(G)}(-1)^{t(D')}e_{\pi(D')}.$$
\label{thm:mainFormula}
\end{thm}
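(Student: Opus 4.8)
The plan is to compose the two expansions already in hand and then reinterpret the resulting numerical coefficient combinatorially. First I would substitute the inducing formula of Proposition~\ref{prop:arc_tic_formula} into the expansion $Y_G=\sum_{D\in\cA(G)}(-1)^{a(D)}e_1\uparrow_D$ from Proposition~\ref{prop:Ginduce2}. Since every tic'd diagram $D'\in{\cal T}(D)$ has the same underlying arcs as $D$, we have $a(D)=a(D')$, so the two sign contributions combine as $(-1)^{a(D)}(-1)^{a(D')-t(D')}=(-1)^{2a(D')-t(D')}=(-1)^{t(D')}$. This already kills the dependence on the arc count and leaves
$$Y_G\equiv_n \frac{1}{n!}\sum_{D\in\cA(G)}\ \sum_{D'\in{\cal T}(D)}(-1)^{t(D')}\prod_i\alpha^{(i)}_{l_i}\binom{n}{\alpha^{(1)}_1,\ldots}e_{\pi(D')}.$$

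Next I would observe that the double sum $\sum_{D\in\cA(G)}\sum_{D'\in{\cal T}(D)}$ ranges over exactly the set $\cA'(G)$: each tic'd arc diagram sits over a unique underlying arc diagram $D\in\cA(G)$, and the sets ${\cal T}(D)$ partition $\cA'(G)$. Thus the formula collapses to a single sum over $\cA'(G)$ weighted by $(-1)^{t(D')}\prod_i\alpha^{(i)}_{l_i}\binom{n}{\ldots}$, and it remains only to convert this weight into a count so that the weighted sum over $\cA'(G)$ becomes an unweighted sum over the richer objects $\cA'_L(G)$.

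The final and conceptually central step is the combinatorial reinterpretation of the coefficient $\prod_i\alpha^{(i)}_{l_i}\binom{n}{\alpha^{(1)}_1,\ldots}$ for a fixed $D'\in\cA'(G)$. The multinomial coefficient counts the ways to distribute the labels $1,\dots,n$ among the pieces of $D'$ as sets of the prescribed sizes; once a set of labels is assigned to a piece there is exactly one increasing arrangement of them along that piece, so the multinomial counts precisely the permutation labelings $\delta\in\fS_n$ that increase along every piece. Independently, for each connected component $i$ the factor $\alpha^{(i)}_{l_i}$ counts the choices of a single starred vertex in its right-most piece. Hence $\prod_i\alpha^{(i)}_{l_i}\binom{n}{\ldots}$ is exactly the number of elements of $\cA'_L(G)$ lying over $D'$. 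Since both $t(D')$ and $\pi(D')$ depend only on the underlying tic'd diagram and not on the added labels or stars, each term in the $\cA'(G)$-sum expands into a block of equal terms indexed by $\cA'_L(G)$, giving
$$Y_G\equiv_n \frac{1}{n!}\sum_{D'\in\cA'_L(G)}(-1)^{t(D')}e_{\pi(D')}.$$

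The only genuine subtlety, and the step I would write most carefully, is this last bookkeeping: that the increasing-labeling constraint makes the multinomial count exact with no overcounting, that the star is confined to the right-most piece of each connected component so that $\prod_i\alpha^{(i)}_{l_i}$ is the right marking count, and that $t$ and $\pi$ are genuinely constant across the fiber over $D'$. Everything preceding it is a formal substitution and sign manipulation using results already established.
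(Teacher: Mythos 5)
Your proposal is correct and follows essentially the same route as the paper's own proof: substitute Proposition~\ref{prop:arc_tic_formula} into Proposition~\ref{prop:Ginduce2}, cancel the arc-count signs to leave $(-1)^{t(D')}$, and reinterpret the multinomial coefficient as increasing permutation labelings on pieces and the factors $\alpha^{(i)}_{l_i}$ as star markings in right-most pieces. Your extra care about the fiber bookkeeping (that ${\cal T}(D)$ partitions $\cA'(G)$ and that $t$ and $\pi$ are constant over each fiber) is left implicit in the paper but is exactly the right point to make precise.
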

\begin{proof}
From  Propositions~\ref{prop:Ginduce2} and~\ref{prop:arc_tic_formula} we have 
\begin{align*}
Y_G&=\sum_{D\in \cA(G)}(-1)^{a(D)}e_1\uparrow_D\\
&\equiv_n\frac{1}{n!}\sum_{D\in \cA(G)}\sum_{D'\in{\cal T}(D)}(-1)^{t(D')}
\prod_i \alpha^{(i)}_{l_i}\binom{n}{\alpha^{(1)}_1,\ldots, \alpha^{(1)}_{l_1},\alpha^{(2)}_1,\ldots, \alpha^{(2)}_{l_2},\ldots}e_{\pi(D')}.
\end{align*}
A multinomial coefficient $\binom{n}{m_1,m_2,\ldots, m_k}$ can be combinatorially interpreted as a permutation increasing along the first $m_1$ indices, increasing along the next $m_2$ indices and so on.  We will combinatorially interpret the multinomial coefficient in the equation above  as  permutations $\delta\in\fS_n$ such that if $B=\{b_1,b_2,\ldots, b_k\}$ with $b_1<b_2<\cdots <b_k$ is a block of $\pi(D')$  associated to a piece then $\delta(b_1)<\delta(b_2)<\cdots <\delta(b_k)$. The multiplication by $\prod_i \alpha^{(i)}_{\ell_i}$ will be interpreted by marking  a vertex using a star in the right-most piece of each connected component, which each have size $\alpha^{(i)}_{l_i}$.
\end{proof}
\section{Triangular ladders}
\label{sec:TL}
In this section we will take the ideas from Section~\ref{sec:formulaSetUp} and apply them to triangular ladders, $TL_n$. 
 In~\cite{Stan95} Stanley used an iterative technique to solve for the chromatic symmetric function of a graph by solving a system of linear equations using Cramer's rule, and proved the $e$-positivity of the path and cycle graphs. In his paper he mentions that this technique does not work on $TL_n$ and mentions that proving the $e$-positivity remains open. In this section we prove $TL_n$ is semi-symmetrized $e$-positive and so $e$-positive for all $n$. Our method is to use our signed combinatorial formula from Theorem~\ref{thm:mainFormula} and define a sign-reversing involution on the associated signed combinatorial objects. 
In Section~\ref{sec:More} we will use the ideas from this section to prove all concatenations of complete graphs and triangular ladders  are $e$-positive.

Because triangular ladders are unit interval graphs we can obtain a number of corollaries from Section~\ref{sec:formulaSetUp}. 
\begin{cor}For $n\geq 2$,
$$Y_{TL_n}=Y_{{TL_{n-1}}}Y_{K_1}-Y_{{TL_{n-1}}}\uparrow_{n-1}^n-Y_{{TL_{n-1}}}\uparrow_{n-2}^n.$$
\end{cor}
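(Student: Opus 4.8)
The plan is to read this off as an immediate specialization of Theorem~\ref{thm:Ginduce}, so the work is entirely in identifying the combinatorial data of $TL_n$ and then substituting. First I would pin down the interval data. Since $TL_n=P_{n,2}$ is built from the intervals $[1,3],[2,4],\ldots,[n-2,n]$, rewriting these in the normalized form $[a_1,1],[a_2,2],\ldots,[a_n,n]$ forces $a_n=n-2$ for $n\geq 3$; equivalently, vertex $n$ is adjacent to exactly the two vertices $n-1$ and $n-2$. This is consistent with the explicit intervals $\ldots,[4,6],[5,7]$ for $TL_7$ recorded in Figure~\ref{fig:TL7}.

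Next I would identify the graph $G'$ obtained by deleting vertex $n$. Removing $n$ deletes the edges incident to $n$ and leaves the intervals $[1,3],[2,4],\ldots,[n-3,n-1]$, which are precisely the defining intervals of $P_{n-1,2}=TL_{n-1}$. Hence $G'=TL_{n-1}$.

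With these two observations the corollary follows by direct substitution into Theorem~\ref{thm:Ginduce}:
$$Y_{TL_n}=Y_{TL_{n-1}}Y_{K_1}-\sum_{i=a_n}^{n-1}Y_{TL_{n-1}}\uparrow_i^n.$$
Since $a_n=n-2$, the index set $\{a_n,\ldots,n-1\}=\{n-2,n-1\}$ contains exactly the two values $i=n-2$ and $i=n-1$, and writing out these two summands yields the stated identity. The equivalent form $Y_{TL_{n-1}}Y_{K_1}=Y_{TL_{n-1}}\uparrow_n^n$ is already built into the theorem, so no separate argument for that piece is needed.

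There is no genuine obstacle here; the statement is essentially a bookkeeping specialization, and the only point requiring care is the boundary case $n=2$. In that case $TL_2$ is the single edge $K_2$ with interval $[1,2]$, so $a_2=1$ and the theorem's sum $\sum_{i=1}^{1}$ contributes only the one term $Y_{TL_1}\uparrow_1^2$. To keep the two-term formula uniform at $n=2$ one should read the would-be second summand $Y_{TL_{n-1}}\uparrow_{n-2}^n=Y_{TL_1}\uparrow_0^2$ as empty (there is no vertex labeled $0$), or else simply regard the clean two-term form as the generic case $n\geq 3$.
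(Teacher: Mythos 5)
Your proof is correct and is essentially the paper's own argument: the paper disposes of this corollary in one line (``Follows quickly from Theorem~\ref{thm:Ginduce}''), and your identification of $a_n=n-2$ and of $G'=TL_{n-1}$ supplies precisely the bookkeeping that specialization requires. Your handling of the $n=2$ boundary case is a careful touch the paper leaves implicit, but it does not change the route.
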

\begin{proof}Follows quickly from Theorem~\ref{thm:Ginduce}. 
\end{proof}
Because we are only working with triangular ladders in this section, we will simplify the notation of $\cA(TL_n)$ as $\cA$
and $\cA'_L(TL_n)$ as $\cA'_L$.

\begin{cor}For $n\geq 1$,
$$Y_{TL_n}\equiv_n \frac{1}{n!}\sum_{D'\in \cA'_L}(-1)^{t(D')}e_{\pi(D')}.$$
\label{cor:TLsum}
\end{cor}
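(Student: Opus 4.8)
The plan is to recognize Corollary~\ref{cor:TLsum} as a direct specialization of the general result already established for arbitrary unit interval graphs. Theorem~\ref{thm:mainFormula} asserts that for \emph{any} unit interval graph $G$ on $n$ vertices, $Y_G\equiv_n \frac{1}{n!}\sum_{D'\in \cA'_L(G)}(-1)^{t(D')}e_{\pi(D')}$. Since the triangular ladder $TL_n=P_{n,2}$ is by definition a unit interval graph (it is $P_{n,2}$, formed from the intervals $[1,3],[2,4],\ldots$, which fits the definition given in Section~\ref{background}), the formula applies verbatim.

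The only content of the proof is therefore notational. Just before the statement, the paper introduced the abbreviations $\cA=\cA(TL_n)$ and $\cA'_L=\cA'_L(TL_n)$ for the sets of arc diagrams associated specifically to triangular ladders. So the plan is: first invoke Theorem~\ref{thm:mainFormula} with $G=TL_n$, which gives $Y_{TL_n}\equiv_n \frac{1}{n!}\sum_{D'\in \cA'_L(TL_n)}(-1)^{t(D')}e_{\pi(D')}$; then substitute the abbreviation $\cA'_L(TL_n)=\cA'_L$ to obtain exactly the displayed equation. I would also note briefly that $TL_n$ qualifies as a unit interval graph so that the hypothesis of Theorem~\ref{thm:mainFormula} is met, and that the $n\geq 1$ range matches (the theorem holds for any $n$-vertex unit interval graph, and $TL_n$ has $n$ vertices for all $n\geq 1$).

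There is essentially no obstacle here: this is a one-line corollary that merely restates the main formula in the simplified notation reserved for the triangular ladder setting, so that subsequent sections can work with the involution without carrying the graph $TL_n$ through every piece of notation. The proof I would write is simply:

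\begin{proof}
Since $TL_n=P_{n,2}$ is a unit interval graph, Theorem~\ref{thm:mainFormula} applies with $G=TL_n$, giving
$$Y_{TL_n}\equiv_n \frac{1}{n!}\sum_{D'\in \cA'_L(TL_n)}(-1)^{t(D')}e_{\pi(D')}.$$
Recalling our convention that $\cA'_L$ abbreviates $\cA'_L(TL_n)$ in this section yields the stated formula.
\end{proof}
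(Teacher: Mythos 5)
Your proposal is correct and matches the paper's own proof, which simply states that the corollary ``follows quickly from Theorem~\ref{thm:mainFormula}''; specializing that theorem to $G=TL_n$ and applying the notational convention $\cA'_L=\cA'_L(TL_n)$ is exactly the intended argument. Nothing more is needed.
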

\begin{proof}Follows quickly from Theorem~\ref{thm:mainFormula}. 
\end{proof}

This section's focus will be to define a sign-reversing involution that can be applied to the signed summation in Corollary~\ref{cor:TLsum}. In order to define a sign-reversing involution we will first need to define a  {\it signed set}  $S$, which is a set  such that each $s\in S$ has an associated  {\it sign}, sign$(s)$, of $+1$ or $-1$. The elements assigned the value $+1$ are called the positive elements and the elements assigned $-1$ are the negative elements. Additionally, each element in our signed-set will have a {\it weight}, $\wt(s)$. A {\it sign-reversing involution} is a map $f:S\rightarrow S$ that is an involution,  $f\circ f = id$, and is {\it weight preserving}, $\wt(f(s))=\wt(s)$. The map $f$ also has to be {\it sign reversing} in that if $f(s)\neq s$ for $s\in S$  the sign associated to $s$ is opposite of the sign associated to $f(s)$. The consequence of such a map is a pairing between many elements in $S$ such that each pair shares the same weight and has a positive element and a negative element so that when we add the  signs of the pair together we get zero. Not all elements $s\in S$ will be part of a pairing. This happens when $f(s)=s$ and we call these elements {\it fixed points}.  This means regarding summations that
$$\sum_{s\in S}\sign(s)\wt(s)=\sum_{\text{fixed points $s$}}\sign(s)\wt(s)$$
and if all fixed points have positive sign, then we have proven our signed sum is actually a non-negative sum. 

Our sign-reversing involution, $\varphi:\cA'_L\rightarrow \cA'_L$, works with the signed set $\cA'_L$ where each element $D'\in\cA'_L$ has sign $\sign(D')=(-1)^{t(D')}$ and weight $\wt(D')=e_{\pi(D')}$ where the set partitions are viewed in the light of  their equivalence classes. This means that if $\pi(D'_1)\sim\pi(D'_2)$ then $D_1'$ has the same weight as $D_2'$  because $e_{D_1'}\equiv_n e_{D_2'}$. We will define this involution inductively, but we will first need some structure lemmas on the arc diagrams $D'\in \cA'_L$, whose underling unmarked and unlabeled arc diagrams are those in $\cA$. As a reminder, these   arc diagrams in $\cA$ are those  where
\begin{enumerate}
\item $D$ has at most one left arc at every vertex and
\item every arc is of length 1 or 2
\end{enumerate}
where the {\it length of an arc} $(i,j)$ is $j-i$. We define the {\it length of a diagram $D$}, $\ell(D)$,  to be the number of vertices minus one. 

The structure lemma will revolve around the idea of the {\it concatenation} of two arc diagrams $D_1$ on vertices $[n]$ and $D_2$ on vertices $[m]$, which we define to be $D_1\cdot D_2$, the arc diagram on $[n+m-1]$ where on the first $n$ vertices  we have $D_1$ and on the  last $m$ vertices we have $D_2$. Note that with our definition of length we have that 
$$\ell(D_1\cdot D_2)=\ell(D_1)+\ell(D_2), $$
which will be handy to keep in mind.

\begin{figure}
\begin{center}
\begin{tikzpicture}
\foreach \x in {1,...,4}
\filldraw[black] (\x/2,0) circle [radius=2pt];
\foreach \x in {}{
\draw[black] (\x/2,0)--(\x/2+.5,0);}
\foreach \x in {3,4}{
\(\arc{(\x/2,0)} \)}
\end{tikzpicture}
\hspace{1cm}
\begin{tikzpicture}
\foreach \x in {1,...,4}
\filldraw[black] (\x/2,0) circle [radius=2pt];
\foreach \x in {1}{
\draw[black] (\x/2,0)--(\x/2+.5,0);}
\foreach \x in {3,4}{
\(\arc{(\x/2,0)} \)}
\end{tikzpicture}
\hspace{1cm}
\begin{tikzpicture}
\foreach \x in {1,...,7}
\filldraw[black] (\x/2,0) circle [radius=2pt];
\foreach \x in {4,5}{
\draw[black] (\x/2,0)--(\x/2+.5,0);}
\foreach \x in {3,7}{
\(\arc{(\x/2,0)} \)}
\end{tikzpicture}
\end{center}
\caption{From left to write we have $L_3$, $C_3$ and $L_2\cdot L_1\cdot C_1\cdot C_2$.}
\label{fig:wedge}
\end{figure}
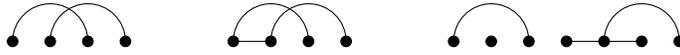

We will find that arc diagrams associated to the triangular ladders are concatenations of two kinds of arc diagrams. The first we will call an interlacing diagram. An {\it interlacing arc diagram}, IL, on $n\geq 2$ vertices will have arcs $(i,i+2)$ for all possible $i$. We define $L_m$ to be an IL diagram of length $m$. The second we will call an interconnecting diagram. An {\it interconnecting arc diagram}, IC, will be an IL diagram, but we include the arc $(1,2)$. We define $C_m$ to be an IC diagram of length $m$. See Figure~\ref{fig:wedge} for an example. In an IC diagram we have one connected component, $\pi(C_m)=[m+1]$. In an IL diagram we have two connected components whose sizes depend on whether the diagram is of odd length or even length.

\begin{lemma}
All diagrams $D\in\cA$ with two or more vertices are the concatenation of some IL and IC diagrams of lengths at least one, meaning
$$D=D_1\cdot D_2\cdot \cdots \cdot D_k$$
where each $D_i$ is an IL or IC diagram with $\ell(D_i)\geq 1$. 
\label{lem:arc_decomp}
\end{lemma}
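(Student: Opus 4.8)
The plan is to locate the vertices at which $D$ can legitimately be split and then to show that each resulting ``uncuttable'' piece is forced to be an IL or IC diagram. Call a vertex $v$ of $D$ a \emph{cut vertex} if no arc $(i,j)$ of $D$ satisfies $i<v<j$ (so no arc straddles $v$); the two endpoints $1$ and $n$ are always cut vertices. Let $1=v_0<v_1<\cdots<v_k=n$ be the list of all cut vertices. Since no arc straddles any $v_t$, and since any arc of length $2$, say $(i,i+2)$, automatically makes its straddled vertex $i+1$ a non-cut vertex, every arc of $D$ has both endpoints inside a single interval $\{v_{t-1},\ldots,v_t\}$. Hence $D=D_1\cdot D_2\cdots D_k$, where $D_t$ is the restriction of $D$ to $\{v_{t-1},\ldots,v_t\}$ and $\ell(D_t)=v_t-v_{t-1}\geq 1$. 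By the maximality of the cut-vertex list, each $D_t$ has no \emph{internal} cut vertex: every vertex strictly between its two endpoints is straddled by some arc. It therefore suffices to prove the structural claim that any diagram in $\cA$ with at least two vertices and no internal cut vertex is an IL or IC diagram.

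For the structural claim I would exploit condition (2) crucially. Because every arc has length $1$ or $2$, an arc $(i,j)$ straddles a vertex $w$ only when it has length $2$, and then it straddles exactly $w=i+1$. Thus ``every internal vertex $w$ is straddled'' is equivalent to ``the length-$2$ arc $(w-1,w+1)$ lies in $D$ for every internal $w$.'' Relabelling the piece so its vertices are $[m]$, this forces all of $(1,3),(2,4),\ldots,(m-2,m)$ to be present, which are precisely the arcs of $L_{m-1}$; in particular $D$ contains the full interlacing diagram as a subdiagram and has no other length-$2$ arcs available.

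It remains to determine which length-$1$ arcs $D$ may carry, and here condition (1) finishes the job. For $j\geq 3$ the vertex $j$ already receives the length-$2$ left arc $(j-2,j)$ (since $j-1$ is internal), so by ``at most one left arc'' it cannot also receive the length-$1$ left arc $(j-1,j)$; hence no length-$1$ arc $(j-1,j)$ with $j\geq 3$ can occur. The only surviving candidate is $(1,2)$, whose presence or absence is exactly what distinguishes $C_{m-1}$ from $L_{m-1}$. So each piece is an IL or IC diagram. The base $m=2$ is immediate, as there are no internal vertices and the two vertices are either joined by $(1,2)$, giving $C_1$, or not, giving $L_1$.

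I expect the only genuine subtlety to be the first paragraph: carefully checking that cutting at every cut vertex really does realise $D$ as an iterated concatenation with each piece free of internal cut vertices (the key being the observation that a length-$2$ arc automatically declares its midpoint non-cut, so arcs never cross piece boundaries). Once that bookkeeping is in place, the characterization of an uncuttable piece is almost forced: ``only length-$2$ arcs straddle'' pins down the interlacing skeleton, and ``at most one left arc'' immediately rules out every length-$1$ arc except $(1,2)$. A fully inductive variant that peels the leftmost piece off $D$ would also work, but the cut-vertex decomposition keeps the argument cleanest.
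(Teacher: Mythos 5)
Your proof is correct, but it is organized quite differently from the paper's. The paper proceeds by induction on the number of vertices, peeling off the right-most factor: it examines the left arc at vertex $n$ and, in the length-$2$ case, takes the largest $j$ such that every vertex $i>j$ has a length-$2$ left arc, so that vertices $j-1,\ldots,n$ form an $L_{n-j+1}$ or $C_{n-j+1}$, and then invokes induction on the remaining prefix. You instead give a global, non-inductive argument: you split $D$ simultaneously at every ``cut vertex'' (vertex straddled by no arc), check that arcs never cross these split points, and then characterize the uncuttable pieces, showing that the two defining conditions of $\cA$ (arcs of length $1$ or $2$; at most one left arc per vertex) force each piece to be exactly an IL or IC diagram. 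Both arguments rest on the same combinatorial facts, and in fact they produce the same (finest) decomposition, but they buy slightly different things: the paper's induction is shorter and matches the inductive style in which the involution $\varphi$ is later defined (where a right-most factor $Q$ is repeatedly peeled off), whereas your version makes the decomposition manifestly canonical, isolates a clean structural characterization of the indecomposable diagrams, and shows precisely where each of the two conditions on $\cA$ is used --- the length condition pins down the interlacing skeleton, and the one-left-arc condition kills every length-$1$ arc except $(1,2)$. The one point you flagged as subtle (arcs not crossing piece boundaries) is indeed fine: an arc straddling a vertex strictly inside a piece must itself lie in that piece, since no arc straddles a cut vertex.
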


\begin{proof}
We will prove this by inducting on the length of the arc diagram. Our base case is when $D$ is an arc diagram of $TL_2$ in which case $D$ is either an IL or IC diagram of length 1. Let $D$ be an arc diagram of $TL_n$ for $n>2$. Say $n$ doesn't have a left arc. Then on vertices $n-1$ and $n$ we have an IL diagram of length 1 and $D=\tilde D\cdot L_1$. The arc diagram $\tilde D$ has less than $n$ vertices so by induction we are done. Say that vertex $n$ has a left arc of length one. Then on vertices $n-1$ and $n$ we have an IC diagram of length 1 and $D=\tilde D\cdot C_1$. The arc diagram $\tilde D$ has less than $n$ vertices so by induction we are done. Lastly, consider the case where vertex $n$ has a left arc of length 2.  There will then be a largest integer $j$ where for all $i>j$ vertex $i$ has a left arc of length two, but vertex $j$ either has no left arc or a left arc of length one. In the first case on vertices $j-1$ through $n$ we have an IL diagram of length $n-j+1$ and $D=\tilde D\cdot L_{n-j+1}$ where  $\tilde D$ has length less than $n$. By induction we are done. In the second case on vertices $j-1$ through $n$ we have an IC diagram of length $n-j+1$ and $D=\tilde D\cdot C_{n-j+1}$ where  $\tilde D$ has length less than $n$. By induction we are done. 
\end{proof}

We will call the concatenation of $D\in\cA$ into IL and IC diagrams the {\it decomposition} of $D$. When talking about $D'\in\cA'_L$ the decomposition we are referring to is the decomposition of its underlying arc diagram $D\in\cA$.

We now have all the tools we need to define our sign-reversing involution. Again our signed-set is $\cA'_L$ where $D'\in\cA'_L$ has sign  $\sign(D')=(-1)^{t(D')}$ and weight $\wt(D')=e_{\pi(D')}$ where the weight is considered in light of the equivalence classes on set partitions defined by equation~\eqref{eq:equivclass} meaning that $\wt(D_1')=\wt(D_2')$ if and only if $\pi(D'_1)\sim\pi( D'_2)$. Further, we mean that  if $D_1'\mapsto D_2'$ then we want
\begin{enumerate}
\item  $\lambda(\pi(D'_1))=\lambda(\pi(D'_2))$ and
\item the sizes of the blocks containing $n$ has the same size.
\item Also, the label on the right-most vertex remains unchanged under the map and
\item  $D_1'$ has a star marking on its right-most vertex if and only if $D'_2$ does as well. 
\end{enumerate} 
These are the conditions we want our inductive map to satisfy. The motivations for the last two conditions will become apparent in a later discussion. 
In general, we will match $D'$ to another diagram that has an arc with a tic mark removed or an arc with a tic mark added. We will  be sure to fix the size of the piece attached to vertex $n$.  We will define this involution inductively on $n$. 

When $n=1$ we only have one kind of diagram, a single vertex labeled with 1 and marked with a star. Let diagram
\begin{center}
\begin{tikzpicture}
\foreach \x in {}
\filldraw[black] (\x/2,0) circle [radius=2pt];
\foreach \x in {0}
\filldraw[black] (\x/2,0) node {$\star$};
\foreach \x in {}{
\draw[black] (\x/2,0)--(\x/2+.5,0);}
\foreach \x in {}{
\(\arc{(\x/2,0)} \)}
\foreach \x in {}{
\(\ticone{(\x/2,0)} \)}
\draw (0.0,-.3) node {$1$};
\end{tikzpicture}
\begin{tikzpicture}
\draw (-1,.2) node {be a fixed point.};
\end{tikzpicture}
\end{center}
Say that $n>1$. By Lemma~\ref{lem:arc_decomp} we can write the underlying arc diagram $D'=P\cdot Q$ where $Q$ is an IL or IC diagram of length at least 1. However, before we continue on we must discuss the slight discrepancy between our decomposition of $D'$, which has vertex star markings and vertex labels, and the diagrams talked about in Lemma~\ref{lem:arc_decomp}, which do not have any labelings. When we decompose $D'=A\cdot B$ we will mostly be referring to the decomposition of the underlying arc diagram with possible tic marks ignoring vertex labels and markings. However, there will be times when we will need to refer to properties of $A$ as if it were also an element of $\cA'_L$. In this case we will keep the underlying vertex labels and think of $A$ as if its labels have been standardized to a permutation by replacing the $i$th smallest label with $i\geq 1$. If there is not a vertex star marking in the right-most piece of $A$ for any reason, we will think of $A$ as if the right-most vertex is marked with a star.  
This allows us to inductively map 
\begin{equation}
D'=P\cdot Q\mapsto \varphi(P)\cdot Q
\label{eq:varphi}
\end{equation}
as long as $P$ is not a fixed point.
Now we will have to discuss here how to reconnect $\varphi(P)$ to $Q$. First we keep the respective vertex labels that both $P$ and $Q$ had before the map, but they may be permuted in $\varphi(P)$.  As for the vertex star markings, if $\varphi(P)$ has its right-most vertex marked with a star, we will remove this if $Q$ contains a star-marked vertex in that piece, or that piece is not the right-most piece in the connected component. Reconnecting $\varphi(P)$ and $Q$ in terms of labeling makes sense as long as our map always preserves the label of the right-most vertex and makes sense in terms of vertex star markings if a diagram with the right-most vertex marked with a star is mapped to something with the same property. Note that by induction conditions 3 and 4 are satisfied. 
Also note that because this map fixes the size of the piece connected to the right-most vertex we can say that the piece containing $n$ has the same size before and after. Since also we maintain the underlying integer partition our output has the same weight as the input. This means conditions 1 and 2 are satisfied as well. Now we only have to consider cases where $P$ is a fixed point of $\varphi$. 

Before we classify the fixed points and discuss the map in these cases we need a few more definitions. Note that IL diagrams of length 1 naturally break our diagrams into sections.  We will define a {\it section} to be a diagram without any IL diagrams of length 1 in its decomposition. The right-most diagram in Figure~\ref{fig:wedge} has two sections. We will say a section {\it satisfies the IC-condition} if it starts with two consecutive IC diagrams of length at least one. See Figure~\ref{fig:markedexample}.

Recall that we will mark some our vertices with a star. Consider $D\in\cA'_L$ with no tic marks, which is just like how our fixed-points will be. Say that $D$ that ends in $Q=L_k$, $k\geq 1$. On vertex $n-1$ we have a right endpoint of a connected component. Because this connected component has only one piece, one vertex in this component will be marked with a star. We will define $s(Q)=i$ to mean we chose the $i$th right-most vertex in the component and marked it with a star. See Figure~\ref{fig:markedexample}. For ease  we will write  $s(Q)$ or $s(L_k)$ for $s(D)$ assuming that we are looking at $Q=L_k$ in the larger scope of diagram $D$.

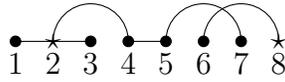
\begin{figure}
\begin{center}
\begin{tikzpicture}
\foreach \x in {0,2,3,...,6}
\filldraw[black] (\x/2,0) circle [radius=2pt];
\foreach \x in {1,7}
\filldraw[black] (\x/2,0) node {$\star$};
\foreach \x in {0,1,3}{
\draw[black] (\x/2,0)--(\x/2+.5,0);}
\foreach \x in {3,6,7}{
\(\arc{(\x/2,0)} \)}
\foreach \x in {}{
\(\tic{(\x/2,0)} \)}
\draw (0.0,-.3) node {1};
\draw (0.5,-.3) node {2};
\draw (1.0,-.3) node {3};
\draw (1.5,-.3) node {4};
\draw (2.0,-.3) node {5};
\draw (2.5,-.3) node {6};
\draw (3.0,-.3) node {7};
\draw (3.5,-.3) node {8};
\end{tikzpicture}
\end{center}
\caption{This diagram $D\in\cA'_L$ satisfies the IC-condition, has $s(D)=5$. }
\label{fig:markedexample}
\end{figure}

Now we are ready to describe the fixed-points. Fixed points are diagrams $D\in\cA'_L$ with no tic marks with  the following conditions. The diagram in Figure~\ref{fig:markedexample} satisfies all five conditions. 
\begin{enumerate}
\item[FP1. ] All sections satisfy the IC-condition except for the right-most ending at $n$ which can be a single  IC diagram.
\item[FP2. ] All IL diagrams $P$  of odd length ending before $n$ in the decomposition have $s(P)\neq 1$. 
\item[FP3. ] All IL diagrams $P$   followed by another IL diagram have $s(P)\neq 1$. 
\item[FP4. ] All IL diagrams $P$  of odd length in the decomposition have $s(P)\neq 2$. 
\item[FP5. ] If there is a $C_m\cdot L_{2k-1}$, $m,k\geq 1$, in the decomposition   then $s(L_{2k-1})\neq 3$ unless $C_m$ is immediately preceded by a $L_j$ with $s(L_j)=1$ or $C_m$ is one if the first two IC diagrams in the section. 
\end{enumerate}

To complete our bijection we will first split all  $D'\in\cA'_L$ into different cases. After we are sure we have all the cases then we will match up our cases and describe the rest of the involution. 

We have already considered the case where $D'=P\cdot Q$ where $P$ is not a fixed point and $Q$ is an IL or IC diagram of length at least one. Now we only have to consider cases about when $P$ is a fixed point. 
Our cases will be determined by the possibilities for $Q$  as well as the possibilities for $P_2$ where $P=P_1\cdot P_2$ and $P_2$ is an IC or IL diagram of length at least one if  $\ell(P)>0$. 

First we consider the cases where  $P_2$ satisfies one of the following three conditions. Either $P_2=C_m$ which fails the IC-condition because it is its own section,   $P_2=L_1$ with $s(P_2)\neq 1$ or  the length of $P_2$ is zero because the length of $P$ was zero. 

\begin{center}
\begin{tabular}{|p{1cm}|p{7cm}|p{8cm}|}
\hline
Case & Conditions on $P$ & Conditions on $Q$\\
\hline
\hline
\rtask{1a} \ref{1a}&
--$P_2=L_1$ with $s(P_2)\neq 1$ or

--$P_2=C_m$ that fails the IC-condition or 

--$\ell(P_2)=0$&
$Q=C_k$ with $t(Q)\geq 1$   \\
\hline
\rtask{1b} \ref{1b}&
--$P_2=L_1$ with $s(P_2)\neq 1$ or

--$P_2=C_m$ that fails the IC-condition or 

--$\ell(P_2)=0$&
$Q=L_k$\\
\hline
\rtask{fix1} \ref{fix1}&
--$P_2=L_1$ with $s(P_2)\neq 1$ or

--$P_2=C_m$ that fails the IC-condition or 

--$\ell(P_2)=0$
&
$Q=C_k$  with $t(Q)=0$\\
\hline
\end{tabular}
\end{center}

In all the remain cases $\ell(P_2)\geq 1$ and either $P_2$ is $C_m$ and the associated section satisfies the IC-condition, $P_2=L_1$ with $s(L_1)=1$ or $P_2=L_m$ with $m\geq 2$. Next we consider the cases where $P_2=L_m$  with $s(P_2)=1$.

\begin{center}
\begin{tabular}{|p{1cm}|p{7cm}|p{8cm}|}
\hline
Case & Conditions on $P$ & Conditions on $Q$\\
\hline
\hline
\rtask{3a} \ref{3a}&
$P_2=L_m$, $m\geq 1$, $s(P_2)=1$

&
$Q=C_k$, $t(Q)\geq 1$\\
\hline
\rtask{4a} \ref{4a}&
$P_2=L_{2m-1}$, $m\geq 1$, $s(P_2)=1$

&
$Q=C_k$, $t(Q)= 0$\\
\hline
\rtask{fix2} \ref{fix2}&
$P_2=L_{2m}$, $m\geq 1$, $s(P_2)=1$

&
$Q=C_k$, $t(Q)= 0$\\
\hline
\rtask{6a} \ref{6a}&
$P_2=L_{m}$, $m\geq 1$, $s(P_2)=1$

&
$Q=L_k$\\
\hline
\end{tabular}
\end{center}

Now we are left with the cases where the fixed-point $P$ ends in an IL diagram $L_m$, $m> 1$, with $s(P_2)\neq 1$ or $P_2$ is an IC diagram whose section satisfies the IC-condition. 
Since so many of the remaining  cases for $P_2$ fall under one of these cases we will say a diagram $P\in\cA'_L$ {\it satisfies the $*$-condition} if
\begin{itemize}
\item $P$ is a fixed-point of length at least 1, 
\item all sections in $P$  satisfy the IC-condition and
\item if $P$ ends in an IL diagram  then $s(P)\neq 1$ and the IL diagram has length at least two. 
\end{itemize}
 In all the remaining cases    $P$ satisfies the $*$-condition. This next section considers all possibilities where $Q$ is an IC diagram.

\begin{center}
\begin{tabular}{|p{1cm}|p{7cm}|p{8cm}|}
\hline
Case & Conditions on $P$ & Conditions on $Q$\\
\hline
\hline
\rtask{7b} \ref{7b}&
$P$ satisfies the $*$-condition   &
$Q=C_k$, $t(Q)=1$ splits $Q$ into $\alpha_1$ vertices before the tic mark and $\alpha_2$ afterwards,  $\alpha_1= \alpha_2$\\
\hline
\rtask{8b} \ref{8b}&
$P$ satisfies the $*$-condition &
$Q=C_k$, $t(Q)=1$ splits $Q$ into $\alpha_1$ vertices before the tic mark and $\alpha_2$ afterwards,  $\alpha_1> \alpha_2$\\
\hline
\rtask{4b} \ref{4b}&
$P$ satisfies the $*$-condition   &
$Q=C_k$, $t(Q)=1$ splits $Q$ into $\alpha_1$ vertices before the tic mark and $\alpha_2$ afterwards, $ \alpha_1<\alpha_2$\\
\hline
\rtask{3b} \ref{3b}&
$P$ satisfies the $*$-condition   &
$Q=C_k$, $t(Q)\geq 2$\\
\hline
\rtask{fix3} \ref{fix3}&
$P$ satisfies the $*$-condition   &
$Q=C_k$, $t(Q)=0$\\
\hline
\end{tabular}
\end{center}

This completes all cases where   $Q$ is an IC diagram. All that is left now is to consider the cases where $Q$ is an  IL diagram. We will be using a detailed condition to split up the remaining cases. We will say a fixed point $P$ satisfies the $**$-condition if it satisfies the $*$-condition and ends in a $C_m$, which is not one of the first two IC-diagrams in the section and in the case when $P$  ends in  $L_j\cdot C_m$ we have $s(L_j)\neq 1$.

\begin{center}
\begin{tabular}{|p{1cm}|p{7cm}|p{8cm}|}
\hline
Case & Conditions on $P$ & Conditions on $I$\\
\hline
\hline
\rtask{6b} \ref{6b}&
$P$ satisfies the $*$-condition  &
$Q=L_k$, $t(Q)\geq 1$\\
\hline
\rtask{7a} \ref{7a}&
$P$ satisfies the $*$-condition  &
$Q=L_{2k-1}$, $t(Q)=0$,
 $s(Q)=2$\\
\hline
\rtask{8a} \ref{8a}&
$P$ satisfies the $**$-condition   &
$Q=L_{2k-1}$, $t(Q)=0$,
 $s(Q)=3$\\
\hline
\rtask{fix4} \ref{fix4}&
$P$ satisfies the $*$-condition, but fails the $**$-condition
&
$Q=L_{2k-1}$, $t(Q)=0$,
 $s(Q)=3$\\
\hline
\rtask{fix5} \ref{fix5}&
$\tilde D$ satisfies the $*$-condition   &
$Q=L_{2k-1}$, $t(Q)=0$,
 $s(Q)\notin \{2,3\}$\\
\hline
\rtask{fix6} \ref{fix6}&
$ P$ satisfies the $*$-condition  &
$Q=L_{2k}$, $t(Q)=0$\\
\hline
\end{tabular}
\end{center}

We  are now ready to define the remaining part of our sign-reversing involution. We will do so by pairing up the cases above and then identifying the remaining cases as fixed points. Recall that we are considering diagrams $D'\in\cA'_L$ of the form $$D'=P\cdot Q=P_1\cdot P_2\cdot Q,$$
which has a permutation labeling, $P$ is a fixed point so has no tic marks, but $Q$ may or may not have tic marks.

In all these maps we will need to preserve the weight of our diagrams. We will need to have the size of the right-most piece connected to vertex $n$ to be the same size before and after the map, and also have the underlying integer partition be the same before and after the map. Additionally, we need the label of the right-most vertex to be the same before and after the map and if the right-most vertex is marked with a star we need its image to share that property. We will not outwardly address these four details since they will not be too hard to confirm, and we leave it for the reader. 

We instead focus on proving the involution is well defined, which is far more intricate since it revolves around very  precise conditions so needs careful consideration. The following lemma will  ease our  proof that the map is well defined.  

\begin{lemma}Consider $D$ in $\cA'_L$ without tic marks. 
\begin{enumerate}[(i)]
\item If $D=A\cdot B$ is a fixed point then $A$ is a fixed point. 
\item If $D=P \cdot L_m$ with $m\geq 1$ is a fixed-point then $P$ satisfies the $*$-condition. 
\item If $D=\bar D\cdot C_m$ with $m\geq 1$ has $\bar D$ satisfy the $*$-condition then so does $D$. 
\item If $D=P \cdot L_m$ where $P$ satisfies the $*$-condition and $s(L_m)=1$ then $P \cdot L_m$ is a fixed point. 
\item If  $P\cdot C_m$  satisfies the $**$-condition if and only if $P$ satisfies the $*$-condition.
\end{enumerate}
\label{lem:**}
\end{lemma}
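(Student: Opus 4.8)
The plan is to prove each of the five structural claims in Lemma~\ref{lem:**} by unwinding the definitions of the fixed-point conditions FP1--FP5, the $*$-condition, and the $**$-condition, using the decomposition of Lemma~\ref{lem:arc_decomp} as the organizing principle. Since all five parts concern diagrams without tic marks and are purely about which combinatorial conditions propagate through concatenation, I expect no arithmetic; the entire difficulty is bookkeeping about \emph{sections}, the position of the starred vertex $s(\cdot)$, and which IC diagrams count as the ``first two'' in a section. I would handle the parts in an order that lets later parts reuse earlier ones, roughly (i), then (iii), then (ii), then (iv), then (v).

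For part (i) I would argue that each of FP1--FP5 is a condition that is inherited by any initial segment of the decomposition. Writing $D = A\cdot B$ with $B$ a concatenation of IL and IC diagrams, the decomposition of $A$ is an initial run of the decomposition of $D$, and the sections of $A$ are exactly the sections of $D$ up to (and possibly truncating) the section containing the break. The key observation is that FP1 allows the \emph{right-most} section of a fixed point to be a lone IC diagram, and since $A$ ends earlier than $D$, its right-most section is either a full section of $D$ (hence satisfies the IC-condition) or is a truncation that I must check still qualifies; I would note that truncating cannot create a new violation of FP2--FP5 because those are conditions on individual IL diagrams $P$ \emph{together with} their successor, and every such local configuration inside $A$ already appears inside $D$. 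The subtle point, and the first place to be careful, is the right-most section of $A$: I must verify that when the break falls in the middle of a section of $D$, what remains is still permissible under FP1, appealing to the clause ``except for the right-most ending at $n$ which can be a single IC diagram.''

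For part (iii), appending a $C_m$ to a $\bar D$ satisfying the $*$-condition: by Lemma~\ref{lem:arc_decomp}(i) analogue and the definition of the $*$-condition, I must check that $D=\bar D\cdot C_m$ is still a fixed point, still has all sections satisfying the IC-condition, and that the final-IL clause of the $*$-condition is now vacuous since $D$ ends in an IC diagram. Appending an IC diagram either extends the current section (so the IC-condition, once satisfied, persists) or, since $C_m$ cannot itself start a new valid section alone, I would confirm it merges appropriately; FP5 must be rechecked around the new junction $L_j\cdot C_m$ if $\bar D$ ended in an IL diagram. Part (ii) then follows from (i): if $D=P\cdot L_m$ is a fixed point, then $P$ is a fixed point by (i), and I extract the $*$-condition by reading off that all sections of $P$ satisfy the IC-condition (from FP1 applied to $P$, whose right-most section is no longer the terminal one of $D$) and that if $P$ ends in an IL diagram it has $s(P)\neq 1$ and length $\geq 2$ (from FP2, FP3, FP4 applied at the $P$/$L_m$ junction). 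Part (iv) is essentially the converse packaging: given $P$ satisfying the $*$-condition and $s(L_m)=1$, I verify FP1--FP5 hold for $P\cdot L_m$, the point being that $s(L_m)=1$ is exactly the value the fixed-point conditions permit for a terminal IL diagram (the prohibitions FP2--FP5 forbid $s=1$ only for non-terminal IL diagrams or ones with specified successors). Finally, for part (v) I would directly compare the definitions: $P\cdot C_m$ satisfies the $**$-condition iff it satisfies the $*$-condition and its terminal $C_m$ is neither of the first two IC diagrams of its section and, when preceded by $L_j$, has $s(L_j)\neq 1$; using (iii) the $*$-condition for $P\cdot C_m$ reduces to the $*$-condition for $P$, and the extra $**$-clauses translate exactly into the stated equivalence. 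The main obstacle throughout will be the interaction between FP5 and the ``first two IC diagrams'' clause, since that is where the conditions on $s(\cdot)$, section structure, and predecessor type all interact, so I would state and reuse a small auxiliary observation that ``being among the first two IC diagrams of a section'' is preserved or destroyed predictably under the concatenations in (iii) and (v).
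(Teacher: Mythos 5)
Your plan follows the paper's proof in its essentials: unwind FP1--FP5 and the $*$/$**$-conditions, check how they pass across concatenation boundaries, and reuse part (i) for the later parts. Parts (i), (iv), and (v) of your sketch would go through essentially as the paper writes them (the paper proves (ii) before (iii), but nothing depends on that ordering). However, two steps are attributed to the wrong fixed-point conditions, and as written they would not close.

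In part (ii) you derive the clause ``if $P$ ends in an IL diagram it has $s(P)\neq 1$ and length $\geq 2$'' from ``FP2, FP3, FP4 applied at the $P$/$L_m$ junction.'' FP3 does give $s\neq 1$ (that IL diagram is followed by the IL diagram $L_m$), but none of FP2, FP3, FP4 can rule out a terminal $L_1$ in $P$: they constrain only star positions, never lengths. The length bound comes from FP1: a terminal $L_1$ in $P$ is a section separator, so $D=P'\cdot L_1\cdot L_m$ would have right-most section equal to $L_m$ alone (or a lone vertex when $m=1$), which neither satisfies the IC-condition nor is a single IC diagram. Relatedly, your parenthetical ``FP1 applied to $P$, whose right-most section is no longer the terminal one of $D$'' is accurate only when $m=1$; when $m>1$ the right-most section of $P$ sits inside the terminal section of $D$, and the argument must instead be that this terminal section contains the IL diagram $L_m$, hence cannot be the single-IC exception allowed by FP1, hence starts with two consecutive IC diagrams, and these survive deleting $L_m$ from its right end. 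Note also that FP1 applied to $P$ alone is genuinely too weak, since it would permit $P$'s right-most section to be a single IC diagram, which the $*$-condition forbids; this is why the paper runs the $m=1$/$m>1$ case split on $D$ rather than citing FP1 for $P$. A similar misattribution occurs in part (iii): the condition to recheck at the junction when $\bar D$ ends in $L_j$ is FP2, not FP5. After appending $C_m$, the diagram $L_j$ ends before $n$, so FP2 demands $s(L_j)\neq 1$, which is exactly what the $*$-condition for $\bar D$ supplies; FP5 concerns $C_m\cdot L_{2k-1}$ patterns and is untouched by a terminal IC diagram, so ``rechecking'' it proves nothing and the actual FP2 obligation is left unverified. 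These are local repairs using ideas already present in your plan, but the lemma is precisely an exercise in this bookkeeping, so the steps need to cite the conditions that actually do the work.
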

\begin{proof}
Part (i) is true because if $D=A\cdot B$ is a fixed point, so satisfies all five conditions, then so does $A$, which also makes $A$ a fixed point.  

Say we have a fixed point $D=P \cdot L_m$ with $m\geq 1$. By part (i) we know $P$ is also a fixed point. By FP1 all but the right-most section of $D$ satisfy the IC-condition. If $m=1$ then $D$'s right-most section is a single vertex so the second right-most section, which is $P$'s right-most section, satisfies the IC-condition. If $m>1$ then  the right-most section of $D$ satisfies the IC-condition, which implies that the right-most section of  $P$ does as well. Thus, all sections in $P$ satisfy the IC-condition. Say that $P$ ends in an IL diagram $L_j$. By FP1 $j>1$ and by FP3 $s(L_j)\neq 1$ so $P$ must satisfy the $*$-condition. This proves part (ii). 

Say $D=P\cdot C_m$ with $m\geq 1$ where $P$ satisfies the $*$-condition. First, we will show that $D$ is a fixed point. We know $P$ satisfies the $*$-condition so all sections satisfy the IC-condition. Tacking on $C_m$ to the end does not change this, so FP1 is satisfied. Also, tacking on $C_m$  doesn't change the fact that FP3, FP4 and FP5 are satisfied. The only way that FP2 is not satisfied is when $P$ ends in $L_j$, $j\geq 1$. Note that because $P$ satisfied the $*$-condition we must have $s(L_j)\neq 1$, so FP2 is satisfied. Thus, $D$ is a fixed point. Because $P$ satisfies the $*$-condition all sections satisfy the IC-condition. Tacking on $C_m$ does not change this. Because $D$ ends in $C_m$ the last condition of the $*$-condition is satisfied and $D$ satisfies the $*$-condition. This completes part (iii).

Say $D=P \cdot L_m$ where $P$ satisfies the $*$-condition and $s(L_m)=1$. We want to show part (iv) by showing that $D$ is a fixed point. Because $P$ satisfies the $*$-condition all its sections satisfy the IC-condition. If $m=1$ then all sections of $D$, except the right-most which is a single vertex, satisfy the IC-condition. If $m>1$ then tacking on an $L_m$ does not change the fact that all sections satisfy the IC-condition, so $D$ satisfies FP1. 
Because $P$ is a fixed point  so satisfies FP2   the only way an $L_{2k-1}$ in the decomposition of $D$  may  have $s(L_{2k-1})=1$ is when it is  at the right end of $P$ if one exists there. However, in this case $s(L_{2k-1})\neq 1$ because  $P$ satisfies the $*$-condition so $D$ satisfies FP2. 
By a very similar argument $D$ also satisfies FP3. 
Now considering FP4.  Because $P$ is a fixed point the only way $D=P\cdot L_m$ can break FP4 is if $m$ is odd and $s(L_m)=2$, but we chose  $s(L_m)=1$. Hence, $D$ satisfies FP4. 
Because $P$ is a fixed point the only place FP5 could fail in $D$ is when $L_m$ is involved, meaning that $P$ ends in an $C_k$, $m$ is odd and $s(L_m)=3$. However, we set $s(L_m)=1$, so $D$ satisfies FP5. 
Hence, $D$ is a fixed point and we have finished part (iv). 

Say $P\cdot C_m$  satisfies the $**$-condition. By part (i) we know that $P$ is a fixed point. Because $P\cdot C_m$ satisfies the $**$-condition we know that $C_m$ is not one of the first two IC diagrams in its section. This implies that all sections of $P$ satisfy the IC-condition and if $P$ ends in  $L_j$ we know that $j\geq 2$ and further since $P\cdot C_m$  satisfies the $**$-condition that $s(L_j)\neq 1$. Hence, $P$ satisfies the $*$-condition. Now assume that $P$  satisfies the $*$-condition and we will consider $P\cdot C_m$. By part (iii) we know that $P\cdot C_m$ also satisfies the $*$-condition. Because all sections of $P$ satisfy the IC-condition we know that $C_m$ can't be one of the first two IC diagrams in its section. Finally consider the case where $C_m$ is preceded by $L_j$, which means that $P$ is ending in $L_j$. Because $P$ satisfies the $*$-condition we know that $s(L_j)\neq 1$, hence, $P\cdot C_m$ satisfies the $**$-condition completing part (v). 

\end{proof}

{\bf Involution part \case:} We will map Case~\ref{1a} with Case~\ref{1b}. In both of these cases we have either $\ell(P_2)=0$, $P_2=L_1$ with $s(P_2)\neq 1$ or $P_2=C_m$ which fails the IC-condition. In this latter case because we fail the IC-condition we can argue that either $P=C_m$ or $P=F \cdot L_1 \cdot C_m$ for 
$m\geq 1$. Further, we can say in  this latter case that because $P$ was a fixed point we know $s(F\cdot L_1)\neq 1$, 
and this matches the other situation where $P_2=L_1$. In summary, also including the situation where $\ell(P_2)=0$, the diagrams in these cases have forms $D'=C_m\cdot Q$ or $D'= F\cdot L_1 \cdot C_m\cdot Q$ where $m\geq 0$ and $s( F\cdot L_1)\neq 1$. We will focus on describing the map for $C_m\cdot Q$, $m\geq 0$, where $Q$ is either $C_k$  with $t(Q)\geq 1$ as in  Case~\ref{1a} or $L_k$ as in Case~\ref{1b}. 

First consider the case where $Q=C_k$, $k\geq 1$, which has at least one tic mark. The arc diagram $C_m\cdot C_k$ has pieces of sizes $\alpha_1+\alpha_2+\cdots +\alpha_{l}$ where $l\geq 2$ and $\alpha_1>m\geq 0$. Let us now determine an integer $j\in[l-1]$ as follows. This $j$ will indicate how we will split this integer composition into two compositions. If $l = 2$ let $j=1$. If there exists an $i\in [1,l-2]$ such that $\alpha_1+\cdots +\alpha_i\geq  \alpha_{i+2}+\cdots +\alpha_{l}$ then let $j$ be the smallest. If there is no such $i$ then let $j=l-1$. Using this $j$ we can define two segments of length 1 arcs. The first segment will have pieces of sizes $\alpha_{j+1}+\cdots +\alpha_{l}$ and the second segment will have pieces of sizes $\alpha_j+\cdots+ \alpha_1$. We label the pieces of each segment with their associated labels in the original diagram. We will interlace these two segments where the first one will end at the  right-most vertex and the second one will end at the second right-most vertex. There may be parts of one of these segments that sits outside and to the left of the interlacing, and in this case we replace these arcs with an IC diagram of the same length. The particular choice of $j$ assures us that none of these `outside arcs'  have a tic mark. The labels on all the pieces will remain the same, but since we created a new connected component that ends at the second right-most vertex we must choose a vertex in the piece associated to $\alpha_1$ to be marked with a star. Mark the $(\alpha_1-m)$th right-most vertex in this piece with a star. See Figure~\ref{fig:Part1} for an example. It is not too hard to see that the output is part of Case~\ref{1b} by the discussion in the last paragraph. Because of the star-marked vertex we can reverse this map. 

In more detail and to assure that this map is indeed invertible we will discuss how we map backwards from Case~\ref{1b} to Case~\ref{1a}. This time say $Q=L_k$, $k\geq 1$. The arc diagram $C_m\cdot L_k$ has two  connected components, one ending at the right-most vertex and one at the second right-most vertex with pieces of sizes $\alpha_{j+1}+\cdots +\alpha_{l}$ and  $\alpha_j+\cdots +\alpha_1$ respectively.  Because $C_m$ doesn't have a tic mark, this $j$ matches the choice of $j$ in the previous paragraph for the list of numbers $\alpha_1, \alpha_2,\ldots, \alpha_{l}$. 
We then map 
$$C_m\cdot L_k\mapsto C_{\alpha_1-s}\cdot C_{(s-1)+\alpha_2+\cdots+\alpha_{l}}$$
where $s=s(C_m\cdot L_k)\leq \alpha_1$, $C_{\alpha_1-s}$ has no tic marks, we remove the vertex star-marking on the piece associated to $\alpha_1$ and $C_{(s-1)+\alpha_2+\cdots+\alpha_{l}}$ has pieces $s+\alpha_2+\cdots+\alpha_{l}$, which has at least two pieces. It is not hard to see this undoes the map given in the paragraph above. See Figure~\ref{fig:Part1} for an example. 

\begin{figure}[h]
\begin{center}
\begin{tikzpicture}
\foreach \x in {0,...,8}
\filldraw[black] (\x/2,0) circle [radius=2pt];
\foreach \x in {9}
\filldraw[black] (\x/2,0) node {$\star$};
\foreach \x in {0,2}{
\draw[black] (\x/2,0)--(\x/2+.5,0);}
\foreach \x in {2,4,5,6,7,8,9}{
\(\arc{(\x/2,0)} \)}
\foreach \x in {4,6}{
\(\tic{(\x/2,0)} \)
\(\ticone{(2/2,0)} \)}
\draw (0.0,-.3) node {1};
\draw (0.5,-.3) node {2};
\draw (1.0,-.3) node {3};
\draw (1.5,-.3) node {4};
\draw (2.0,-.3) node {5};
\draw (2.5,-.3) node {6};
\draw (3.0,-.3) node {7};
\draw (3.5,-.3) node {8};
\draw (4.0,-.3) node {9};
\draw (4.5,-.3) node {10};
\end{tikzpicture}
\begin{tikzpicture}
\draw (-1,.2) node {$\longleftrightarrow$};
\foreach \x in {0,1,2,3,5,6,7,8}
\filldraw[black] (\x/2,0) circle [radius=2pt];
\foreach \x in {4,9}
\filldraw[black] (\x/2,0) node {$\star$};
\foreach \x in {0}{
\draw[black] (\x/2,0)--(\x/2+.5,0);}
\foreach \x in {3,...,9}{
\(\arc{(\x/2,0)} \)}
\foreach \x in {3,4}{
\(\tic{(\x/2,0)} \)}
\draw (0.0,-.3) node {5};
\draw (0.5,-.3) node {6};
\draw (1.0,-.3) node {4};
\draw (1.5,-.3) node {7};
\draw (2.0,-.3) node {1};
\draw (2.5,-.3) node {8};
\draw (3.0,-.3) node {2};
\draw (3.5,-.3) node {9};
\draw (4.0,-.3) node {3};
\draw (4.5,-.3) node {10};
\end{tikzpicture}
\end{center}
\caption{Example of the involution part 1.}
\label{fig:Part1}
\end{figure}
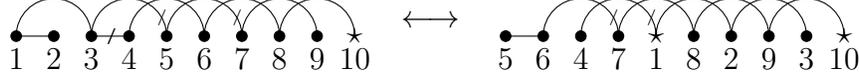

{\bf Involution part \case:} We will map diagrams from Case~\ref{3a} with diagrams from Case~\ref{3b}. Say $D'$ is a diagram from Case~\ref{3a}. Then $D'=P_1\cdot L_m\cdot C_k$ with $s(L_m)=1$,  $t(C_k)\geq 1$ and $P_1 \cdot L_m$ is a fixed point. Say that $C_k$ has pieces of sizes $\alpha_1+\cdots +\alpha_{l}$. 
We map 
$$P_1 \cdot L_m\cdot C_k\mapsto P_1 \cdot C_{m+k}$$
where we remove the star marking on the component connected to the second-most right vertex of $L_m$ and 
then we place tic marks on $C_{m+k}$ in a way that depends on whether $L_m$ has even or odd length. Say $L_m$ has even length, so has two components of sizes $j$ and $j+1$. Note that the component of size $j+1$ is connected to the left, which we want to preserve. In this case we set $C_{m+k}$ so it has pieces of sizes $(j+\alpha_1)+j+\alpha_2+\cdots+\alpha_{l}$. If instead $L_m$ has odd length, so has components of sizes $j$ and $j$, then we set $C_{m+k}$ so it has pieces of sizes $j+(j+\alpha_1-1)+\alpha_2+\cdots+\alpha_{l}$. In either case we keep the same labels on the same associated pieces. Also, in either case the output satisfies the conditions from Case~\ref{3b}, which are that $C_{m+k}$ has at least two tic marks and by Lemma~\ref{lem:**} (ii) we know that $P_1$ satisfies the $*$-condition. This is reversible because the even case corresponds to the first piece of $C_{m+k}$ being larger than the second. The odd case corresponds to  the first piece of $C_{m+k}$ being weakly smaller than the second. Further because  $P_1$ satisfies   the $*$-condition and because we set $s(L_m)=1$ in the backwards map by (iv) in Lemma~\ref{lem:**}  we have that $P_1\cdot L_m$ is a fixed point so we are well defined. See Figure~\ref{fig:Part2} for an example. 

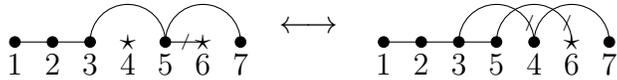
\begin{figure}[h]
\begin{center}
\begin{tikzpicture}
\foreach \x in {0,...,2,4,6}
\filldraw[black] (\x/2,0) circle [radius=2pt];
\foreach \x in {3,5}
\filldraw[black] (\x/2,0) node {$\star$};
\foreach \x in {0,1,4}{
\draw[black] (\x/2,0)--(\x/2+.5,0);}
\foreach \x in {4,6}{
\(\arc{(\x/2,0)} \)}
\foreach \x in {4}{
\(\ticone{(\x/2,0)} \)}
\draw (0.0,-.3) node {1};
\draw (0.5,-.3) node {2};
\draw (1.0,-.3) node {3};
\draw (1.5,-.3) node {4};
\draw (2.0,-.3) node {5};
\draw (2.5,-.3) node {6};
\draw (3.0,-.3) node {7};
\end{tikzpicture}
\begin{tikzpicture}
\draw (-1,.2) node {$\longleftrightarrow$};
\foreach \x in {0,...,4,6}
\filldraw[black] (\x/2,0) circle [radius=2pt];
\foreach \x in {5}
\filldraw[black] (\x/2,0) node {$\star$};
\foreach \x in {0,1,2}{
\draw[black] (\x/2,0)--(\x/2+.5,0);}
\foreach \x in {4,5,6}{
\(\arc{(\x/2,0)} \)}
\foreach \x in {4,5}{
\(\tic{(\x/2,0)} \)}
\draw (0.0,-.3) node {1};
\draw (0.5,-.3) node {2};
\draw (1.0,-.3) node {3};
\draw (1.5,-.3) node {5};
\draw (2.0,-.3) node {4};
\draw (2.5,-.3) node {6};
\draw (3.0,-.3) node {7};
\end{tikzpicture}
\end{center}
\caption{Example of the involution part 2.}
\label{fig:Part2}
\end{figure}

{\bf Involution part \case:} Here we will map Case~\ref{4a} to Case~\ref{4b}. Say $D'$ satisfies Case~\ref{4a}. Then $D'=P_1\cdot L_{2m-1}\cdot C_k$ where $m\geq 1$, $s(L_{2m-1})=1$, $t(C_k)=0$ and $P_1\cdot L_{2m-1}$ is a fixed point. We map 
$$P_1\cdot L_{2m-1}\cdot C_k\mapsto P_1\cdot C_{k+2m-1}$$
where we remove the star marking on the component connected to the vertex second from the right of $L_{2m-1}$,  $C_{k+2m-1}$ has two pieces of sizes $m+(k+m)$ and we keep the same labels on the same associated pieces. This satisfies Case~\ref{4b} because $1\leq m<k+m$ and $P_1$ satisfies the $*$-condition by Lemma~\ref{lem:**} (ii). Because of the condition on the two pieces in $C_{k+2m-1}$ this is reversible. Because $P_1$ satisfies the $*$-condition  and because we set $s(L_{2m-1})=1$ in the backwards map by (iv) in Lemma~\ref{lem:**}  we have that $P_1 \cdot L_{2m-1}$ is a fixed point so we are well defined. See Figure~\ref{fig:Part3} for an example.

\begin{figure}[h]
\begin{center}
\begin{tikzpicture}
\foreach \x in {0,2,3,4,...,6}
\filldraw[black] (\x/2,0) circle [radius=2pt];
\foreach \x in {1,7}
\filldraw[black] (\x/2,0) node {$\star$};
\foreach \x in {0,1,5}{
\draw[black] (\x/2,0)--(\x/2+.5,0);}
\foreach \x in {4,5,7}{
\(\arc{(\x/2,0)} \)}
\foreach \x in {}{
\(\tic{(\x/2,0)} \)}
\draw (0.0,-.3) node {1};
\draw (0.5,-.3) node {2};
\draw (1.0,-.3) node {3};
\draw (1.5,-.3) node {4};
\draw (2.0,-.3) node {5};
\draw (2.5,-.3) node {6};
\draw (3.0,-.3) node {7};
\draw (3.5,-.3) node {8};
\end{tikzpicture}
\begin{tikzpicture}
\draw (-1,.2) node {$\longleftrightarrow$};
\foreach \x in {0,1,2,3,4,...,6}
\filldraw[black] (\x/2,0) circle [radius=2pt];
\foreach \x in {7}
\filldraw[black] (\x/2,0) node {$\star$};
\foreach \x in {0,1,2}{
\draw[black] (\x/2,0)--(\x/2+.5,0);}
\foreach \x in {4,5,6,7}{
\(\arc{(\x/2,0)} \)}
\foreach \x in {4}{
\(\tic{(\x/2,0)} \)}
\draw (0.0,-.3) node {1};
\draw (0.5,-.3) node {2};
\draw (1.0,-.3) node {3};
\draw (1.5,-.3) node {5};
\draw (2.0,-.3) node {4};
\draw (2.5,-.3) node {6};
\draw (3.0,-.3) node {7};
\draw (3.5,-.3) node {8};
\end{tikzpicture}
\end{center}
\caption{Example of the involution part 3.}
\label{fig:Part3}
\end{figure}
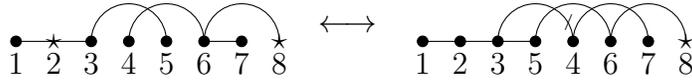

{\bf Involution part \case:} Here we will map  Case~\ref{6a} to Case~\ref{6b}. Say $D'$ is part of Case~\ref{6a} so $D'=P_1\cdot L_m\cdot L_k$ where $s(L_m)=1$ and $P_1 \cdot L_m$ is a fixed point. We map 
$$P_1\cdot L_m\cdot L_k\mapsto P_1\cdot L_{m+k},$$
which is the exact same diagram but we add a length 2 arc between $L_m$ and $L_k$ with a tic mark and remove the star-marking on the piece connected to the vertex second from the right in $L_m$. The output is certainly in Case~\ref{6b} since $L_{m+k}$ has at least one tic mark and $P_1$ satisfies the $*$-condition by Lemma~\ref{lem:**} (ii). This map is easily reversible by removing the arc with the tic mark we had just added, which is the first tic mark that appears from left-to-right vertex-wise in $L_{m+k}$. Because  $P_1$ satisfies the $*$-condition and because we set $s(L_m)=1$ in the backwards map by (iv) in Lemma~\ref{lem:**}  we have that $P_1 \cdot L_m$ is a fixed point so we are well defined. See Figure~\ref{fig:Part4} for an example.

\begin{figure}[h]
\begin{center}
\begin{tikzpicture}
\foreach \x in {0,...,2,4}
\filldraw[black] (\x/2,0) circle [radius=2pt];
\foreach \x in {3,5,6}
\filldraw[black] (\x/2,0) node {$\star$};
\foreach \x in {0,1}{
\draw[black] (\x/2,0)--(\x/2+.5,0);}
\foreach \x in {4,6}{
\(\arc{(\x/2,0)} \)}
\foreach \x in {}{
\(\ticone{(\x/2,0)} \)}
\draw (0.0,-.3) node {1};
\draw (0.5,-.3) node {2};
\draw (1.0,-.3) node {3};
\draw (1.5,-.3) node {4};
\draw (2.0,-.3) node {5};
\draw (2.5,-.3) node {6};
\draw (3.0,-.3) node {7};
\end{tikzpicture}
\begin{tikzpicture}
\draw (-1,.2) node {$\longleftrightarrow$};
\foreach \x in {0,...,4}
\filldraw[black] (\x/2,0) circle [radius=2pt];
\foreach \x in {5,6}
\filldraw[black] (\x/2,0) node {$\star$};
\foreach \x in {0,1}{
\draw[black] (\x/2,0)--(\x/2+.5,0);}
\foreach \x in {4,5,6}{
\(\arc{(\x/2,0)} \)}
\foreach \x in {5}{
\(\tic{(\x/2,0)} \)}
\draw (0.0,-.3) node {1};
\draw (0.5,-.3) node {2};
\draw (1.0,-.3) node {3};
\draw (1.5,-.3) node {5};
\draw (2.0,-.3) node {4};
\draw (2.5,-.3) node {6};
\draw (3.0,-.3) node {7};
\end{tikzpicture}
\end{center}
\caption{Example of the involution part 4.}
\label{fig:Part4}
\end{figure}
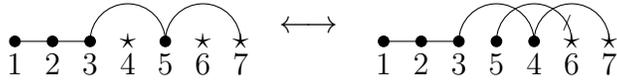
{\bf Involution part \case:} Here we will map  Case~\ref{7b} to Case~\ref{7a}. Say $D'$ satisfies Case~\ref{7b}. Then $D'=P\cdot C_k$, $k\geq 1$, where $P$ satisfies the $*$-condition and $t(C_k)=1$ with pieces of sizes $\alpha_1+\alpha_2$ with $\alpha_1= \alpha_2$. This means particularly that $k=2\alpha_1-1$ is odd. We map 
$$P\cdot C_{2\alpha_1-1}\mapsto P\cdot L_{2\alpha_1-1}$$
where there are no tic marks anywhere on $L_{2\alpha_1-1}$, we add a star-marking on the component attached to the vertex second from the right in $L_{2\alpha_1-1}$ so that $s(L_{2\alpha_1-1})=2$ and we keep all the vertex labeling the same in their respective pieces. 
The output is certainly in Case~\ref{7a} and this map is easily reversible and well defined. See Figure~\ref{fig:Part5} for an example.

\begin{figure}[h]
\begin{center}
\begin{tikzpicture}
\foreach \x in {0,1,2,3,5}
\filldraw[black] (\x/2,0) circle [radius=2pt];
\foreach \x in {4}
\filldraw[black] (\x/2,0) node {$\star$};
\foreach \x in {0,1,2}{
\draw[black] (\x/2,0)--(\x/2+.5,0);}
\foreach \x in {4,5}{
\(\arc{(\x/2,0)} \)}
\foreach \x in {4}{
\(\tic{(\x/2,0)} \)}
\draw (0.0,-.3) node {1};
\draw (0.5,-.3) node {2};
\draw (1.0,-.3) node {3};
\draw (1.5,-.3) node {4};
\draw (2.0,-.3) node {5};
\draw (2.5,-.3) node {6};
\end{tikzpicture}
\begin{tikzpicture}
\draw (-1,.2) node {$\longleftrightarrow$};
\foreach \x in {0,1,4,5}
\filldraw[black] (\x/2,0) circle [radius=2pt];
\foreach \x in {2,3}
\filldraw[black] (\x/2,0) node {$\star$};
\foreach \x in {0,1}{
\draw[black] (\x/2,0)--(\x/2+.5,0);}
\foreach \x in {4,5}{
\(\arc{(\x/2,0)} \)}
\foreach \x in {}{
\(\tic{(\x/2,0)} \)}
\draw (0.0,-.3) node {1};
\draw (0.5,-.3) node {2};
\draw (1.0,-.3) node {3};
\draw (1.5,-.3) node {5};
\draw (2.0,-.3) node {4};
\draw (2.5,-.3) node {6};
\end{tikzpicture}
\end{center}
\caption{Example of the involution part 5.}
\label{fig:Part5}
\end{figure}
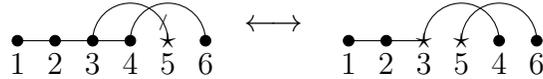

{\bf Involution part \case:} Here we will map  Case~\ref{8b} to Case~\ref{8a}. Say $D'$ satisfies Case~\ref{8b}. Then $D'=P\cdot C_k$, $k\geq 1$, where $P$ satisfies the $*$-condition and $t(C_k)=1$ with pieces of sizes $\alpha_1+\alpha_2$ with $\alpha_1> \alpha_2$. 
We map 
$$P\cdot C_{k}\mapsto P\cdot C_{\alpha_1-\alpha_2}\cdot L_{2\alpha_2-1}$$
where there are no tic marks anywhere, we add a star-marking on the component attached to the vertex second from the right in $L_{2\alpha_2-1}$ so that $s(L_{2\alpha_2-1})=3$ and we keep all the vertex labelings the same in their respective pieces. Because $\alpha_1-\alpha_2>0$ and Lemma~\ref{lem:**}  (v) we know that we must be in Case~\ref{8a}. The backwards map is that we replace $P\cdot C_{\alpha_1-\alpha_2}\cdot L_{2\alpha_2-1}$ with $P\cdot C_{\alpha_1+\alpha_2-1}$ so there is a tic mark breaking $C_{\alpha_1+\alpha_2-1}$ into two pieces of sizes $\alpha_1+\alpha_2$. By Lemma~\ref{lem:**}  (v) this is well defined. See Figure~\ref{fig:Part6} for an example.

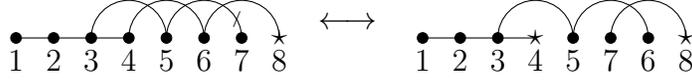
\begin{figure}[h]
\begin{center}
\begin{tikzpicture}
\foreach \x in {0,1,2,3,4,5,6}
\filldraw[black] (\x/2,0) circle [radius=2pt];
\foreach \x in {7}
\filldraw[black] (\x/2,0) node {$\star$};
\foreach \x in {0,1,2}{
\draw[black] (\x/2,0)--(\x/2+.5,0);}
\foreach \x in {4,5,6,7}{
\(\arc{(\x/2,0)} \)}
\foreach \x in {6}{
\(\tic{(\x/2,0)} \)}
\draw (0.0,-.3) node {1};
\draw (0.5,-.3) node {2};
\draw (1.0,-.3) node {3};
\draw (1.5,-.3) node {4};
\draw (2.0,-.3) node {5};
\draw (2.5,-.3) node {6};
\draw (3.0,-.3) node {7};
\draw (3.5,-.3) node {8};
\end{tikzpicture}
\begin{tikzpicture}
\draw (-1,.2) node {$\longleftrightarrow$};
\foreach \x in {0,1,2,4,5,6}
\filldraw[black] (\x/2,0) circle [radius=2pt];
\foreach \x in {3,7}
\filldraw[black] (\x/2,0) node {$\star$};
\foreach \x in {0,1,2}{
\draw[black] (\x/2,0)--(\x/2+.5,0);}
\foreach \x in {4,6,7}{
\(\arc{(\x/2,0)} \)}
\foreach \x in {}{
\(\tic{(\x/2,0)} \)}
\draw (0.0,-.3) node {1};
\draw (0.5,-.3) node {2};
\draw (1.0,-.3) node {3};
\draw (1.5,-.3) node {4};
\draw (2.0,-.3) node {5};
\draw (2.5,-.3) node {7};
\draw (3.0,-.3) node {6};
\draw (3.5,-.3) node {8};\end{tikzpicture}
\end{center}
\caption{Example of the involution part 6.}
\label{fig:Part6}
\end{figure}

{\bf Involution part \case:} The remaining Cases \ref{fix1},  \ref{fix2}, \ref{fix3}, \ref{fix4}, \ref{fix5} and \ref{fix6} 
are the fixed points of our map. Note that all diagrams from these six cases satisfy all five fixed point conditions, but we still need to show that all fixed points arise from one of these six cases. To be sure of this we will consider a fixed point $D=P\cdot Q$, where $Q$ is an IL or IC diagram of length at least one. By Lemma~\ref{lem:**}  (i) we know that $P$ is a fixed point. Let us consider the different cases for $P$ and $Q$. We will show each case falls under one of the six fixed point cases. 

First consider if $Q=C_k$. We could have $P$ ending in $L_{2m}$. If $s(L_{2m})=1$ we are in Case~\ref{fix2} and if $s(L_{2m})\neq 1$ then we are in Case~\ref{fix3}. If instead $P$ ends in $L_{2m-1}$, which must have $s(L_{2m-1})\neq 1$ because $D$ is a fixed point, then we are in Case~\ref{fix1} if $m=1$ and Case~\ref{fix3} otherwise. Lastly we may have $P$ ending in $C_m$, which falls under Case~\ref{fix1} if $C_m$ fails the IC-condition or Case~\ref{fix3} if $C_m$ satisfies the IC-condition. 

Next consider the case where $Q=L_k$ and in the case where $k$ is odd we have $s(L_k)\notin\{2,3\}$. We could have $P$ ending in $L_m$. Because $D$ is a fixed point we would have $m\geq 2$ and $s(L_m)\neq 1$. This falls under Case~\ref{fix5} if $k$ is odd and Case~\ref{fix6} if $k$ is even. We could also have $P$ ending in $C_m$ and because $D$ was a fixed point we know that $C_m$ is not the first IC diagram in the section. Similarly, this falls under Case~\ref{fix5} if $k$ is odd and Case~\ref{fix6} if $k$ is even. 

Lastly consider the case where $Q=L_k$ where $k$ is odd and $s(L_k)=3$. All these cases fall under Case~\ref{fix4}. We could have $P$ ending in $L_{2j}\cdot C_m$ where $s(L_{2j})=1$, $P$ ending in $C_m$ that is the second IC diagram in the section or $P$ could end in $L_m$ with $s(L_m)\neq 1$. 

\begin{thm}
For $n\geq 1$ the triangular ladder $TL_n$ is semi-symmetrized $e$-positive and so $e$-positive. 
\label{thm:TLpos}
\end{thm}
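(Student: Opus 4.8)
The plan is to evaluate the signed sum of Corollary~\ref{cor:TLsum} by collapsing it with the map $\varphi$ assembled in Involution parts 1 through 7. I would regard $\cA'_L$ as a signed set in which $D'$ carries $\sign(D')=(-1)^{t(D')}$ and $\wt(D')=e_{\pi(D')}$, the weight read modulo the equivalence $\sim$ so that $\wt(D'_1)=\wt(D'_2)$ whenever $\pi(D'_1)\sim\pi(D'_2)$. With this framing the theorem reduces to three things: that $\varphi$ is a sign-reversing, weight-preserving involution, that its case list exhausts $\cA'_L$, and that its fixed points all carry positive sign.

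For the involution check, each paired map in Involution parts 1--6 adds or removes exactly one tic mark, so it changes $t(D')$ by one and reverses the sign; and by construction each preserves $\lambda(\pi(D'))$ together with the size of the piece attached to vertex $n$ (the bookkeeping conditions 1--4 following equation~\eqref{eq:varphi}), hence preserves the weight in the sense of $\sim$. The inductive reduction $D'=P\cdot Q\mapsto\varphi(P)\cdot Q$ of equation~\eqref{eq:varphi} handles every diagram whose fixed-point prefix $P$ is not itself a fixed point, and the leftover diagrams, with $P$ a fixed point, are matched by pairing Case~\ref{1a} with Case~\ref{1b}, Case~\ref{3a} with Case~\ref{3b}, Case~\ref{4a} with Case~\ref{4b}, Case~\ref{6a} with Case~\ref{6b}, Case~\ref{7b} with Case~\ref{7a}, and Case~\ref{8b} with Case~\ref{8a}. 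Exhaustiveness follows from Lemma~\ref{lem:arc_decomp}, which writes $D'=P\cdot Q$ with $Q$ an IL or IC diagram and lets the case tables partition $\cA'_L$ by the shapes of $Q$ and of the last block of $P$; that each pairing is mutually inverse and that the reconnection never disturbs the label or star of the right-most vertex is precisely what the backward descriptions in parts 1--6 and the structural facts in Lemma~\ref{lem:**}(i)--(v) guarantee. Involution part 7 then verifies that the surviving Cases~\ref{fix1},~\ref{fix2},~\ref{fix3},~\ref{fix4},~\ref{fix5},~\ref{fix6} are exactly the fixed points of $\varphi$.

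Finally I would read off the conclusion. Every fixed point is tic-free, so $t(D')=0$ and $\sign(D')=+1$; hence the involution principle yields
$$Y_{TL_n}\equiv_n\frac{1}{n!}\sum_{\substack{D'\in\cA'_L\\ \varphi(D')=D'}}e_{\pi(D')},$$
a non-negative rational combination of elementary functions in NCSym, so $Y_{TL_n}$ is semi-symmetrized $e$-positive; applying $\rho$ then shows $X_{TL_n}$ is $e$-positive, and therefore $TL_n$ is $e$-positive. I expect the genuine obstacle to be the well-definedness of $\varphi$: confirming that every image really lands in its partner case so that the forward and backward maps agree, and that the inductive rule never clashes with the reconnection conventions for the right-most vertex's label and star. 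The intricate FP1--FP5, $*$-, and $**$-conditions are engineered exactly to force these matchings, so this step is a meticulous, case-by-case audit rather than a single conceptual difficulty.
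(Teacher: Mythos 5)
Your proposal is correct and takes essentially the same route as the paper: the paper's proof of Theorem~\ref{thm:TLpos} likewise applies the sign-reversing involution $\varphi$ to the signed set $\cA'_L$ appearing in Corollary~\ref{cor:TLsum} and concludes positivity because every fixed point, having no tic marks, carries sign $+1$. The burden you correctly identify---verifying that each case pairing is well defined, mutually inverse, weight-preserving, and exhaustive---is exactly what the paper discharges in the construction of $\varphi$ (Involution parts 1--7 and Lemma~\ref{lem:**}) preceding the theorem.
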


\begin{proof}
If we apply the sign reversing involution $\varphi$ on the signed set $\cA'_L$. We can see by the equation in Corollary~\ref{cor:TLsum} that $Y_{TL_n}$ is  semi-symmetrized $e$-positive and so $e$-positive because all fixed points have positive sign because they have no tic marks.
\end{proof}

Using the sign-reversing involution we defined in this section we can prove a few more graphs are semi-symmetrized $e$-positive.  We will discuss this in the next section, but we will need the following fact about the sign-reversing involution $\varphi$ we defined. 

\begin{prop}
If we are given a diagram $D'\in\cA_L$ such that $D'=C_{m-1}\cdot C_1\cdot  F'$ with $m\geq 2$ where $C_{m-1}\cdot C_1$ has no tic marks, then
\begin{enumerate}
\item  $\varphi(D')$ has the form $C_{m-1}\cdot C_1\cdot F''$  with no tic marks on $C_{m-1}\cdot C_1$, 
\item  the labels on the first $m$ vertices of $D'=C_{m-1}\cdot C_1\cdot  F'$ are the same as the labels on the first $m$ vertices of the output and 
\item if $D'$ doesn't have a star on the first vertex then neither does its output. 
\end{enumerate}
\label{prop:varphirestricts}
\end{prop}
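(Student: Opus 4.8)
The plan is to establish all three conclusions together by strong induction on the number of vertices $n$ of $D'$, tracking the recursive definition of $\varphi$ in~\eqref{eq:varphi} together with the case analysis of Involution parts 1--7. Throughout I write $D'=P\cdot Q$ with $Q$ the right-most IL or IC diagram of the decomposition. Because $D'=C_{m-1}\cdot C_1\cdot F'$ opens with two consecutive IC diagrams, the pair $C_{m-1}\cdot C_1$ begins the first section and that section satisfies the IC-condition; in particular the leading pair sits entirely inside $P$ whenever $F'$ is nonempty.

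For the base case, when $F'$ is empty the diagram $D'=C_{m-1}\cdot C_1$ is a single section satisfying the IC-condition with no IL diagrams, so FP1--FP5 hold and $D'$ is a fixed point; hence $\varphi(D')=D'$ and (1)--(3) are immediate. For the inductive step with $F'$ nonempty, if $P$ is not a fixed point then $\varphi(D')=\varphi(P)\cdot Q$, and since $P$ has fewer vertices and still begins with a tic-free $C_{m-1}\cdot C_1$, the inductive hypothesis gives $\varphi(P)=C_{m-1}\cdot C_1\cdot F'''$ with the required properties; re-attaching $Q$ on the far right affects neither the first $m$ labels, nor the (empty) collection of tic marks on the leading pair, nor the star at vertex $1$.

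The substantive case is $P$ a fixed point, so that $D'$ is routed through one of the Involution parts. The organizing observation I would prove first is that every such map rewrites only a bounded suffix of the decomposition: in Part 1 it leaves the prefix $F\cdot L_1$ untouched, while in Parts 2--6 it leaves the prefix $P$ (equivalently $P_1$) untouched and rewrites only the last one or two diagrams together with $Q$. Hence whenever the altered suffix lies strictly to the right of $C_1$ --- which is automatic once $P_2$ is an IL diagram (the leading pair consists of IC diagrams) or the decomposition has at least four diagrams --- the leading pair, its labels, and the star at vertex $1$ are untouched and (1)--(3) hold verbatim. It then remains to treat the short configurations in which the altered suffix could reach back to $C_1$, namely $P=C_{m-1}\cdot C_1$ with $Q=F'$ a single diagram.

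In this remaining situation $P_2=C_1$ is an IC diagram whose section $C_{m-1}\cdot C_1$ satisfies the IC-condition, so $P$ satisfies the $*$-condition and Part 1 is excluded. If $Q=C_k$ we land in Case~\ref{7b},~\ref{8b},~\ref{4b},~\ref{3b} or~\ref{fix3}, and if $Q=L_k$ with $k$ even, or $k$ odd and $s(Q)\notin\{2,3\}$, we land in Case~\ref{6b},~\ref{7a},~\ref{fix5} or~\ref{fix6}; in each of these the map keeps $P=C_{m-1}\cdot C_1$ intact and only rewrites or appends to its right, so the leading pair survives. The main obstacle, and the single point that forces the finer conditions, is the configuration $D'=C_{m-1}\cdot C_1\cdot L_{2k-1}$ with $s(L_{2k-1})=3$: the Part 6 map of Case~\ref{8a} would merge $C_1$ with the trailing IL diagram and destroy the leading pair. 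The resolution is that, $C_1$ being the \emph{second} IC diagram of its section, $P=C_{m-1}\cdot C_1$ fails the $**$-condition by Lemma~\ref{lem:**}(v); therefore $D'$ falls into Case~\ref{fix4} rather than Case~\ref{8a} and is a fixed point, so the leading pair is again preserved. Assembling these cases with the inductive hypothesis yields (1), and (2) and (3) then follow because vertices $1,\dots,m$, and in particular vertex $1$, never belong to any rewritten suffix.
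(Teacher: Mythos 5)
Your strategy --- induction following equation~\eqref{eq:varphi} together with the observation that each Involution part rewrites only the last one or two diagrams of the decomposition --- is essentially the paper's own argument, and your treatment of conclusions (1) and (2) is sound. In fact, singling out Case~\ref{8a} as the one configuration that could absorb the leading $C_1$, and excluding it because $C_{m-1}\cdot C_1$ fails the $**$-condition, is precisely the verification hiding behind the paper's ``we can clearly see.''

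The gap is in conclusion (3). Your justification is that ``vertices $1,\dots,m$, and in particular vertex $1$, never belong to any rewritten suffix,'' and, in the short configurations, that the map ``keeps $P=C_{m-1}\cdot C_1$ intact.'' That is true for arcs, tic marks and labels, but false for star markings: in Parts 5 and 6 the new star is placed so that $s(\cdot)=2$ or $3$, and $s$ counts vertices from the right end of a \emph{connected component} of the whole diagram, which passes through the shared endpoint vertex back into the leading pair. Concretely, take $D'=C_{m-1}\cdot C_1\cdot C_1$ with a single tic mark on the last arc: this is Case~\ref{7b} with $\alpha_1=\alpha_2=1$, and Part 5 sends it to $C_{m-1}\cdot C_1\cdot L_1$ with $s(L_1)=2$; the relevant component is $\{1,\dots,m+1\}$, so the new star lands on vertex $m$ --- inside the first $m$ vertices you claim are untouched. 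Conclusion (3) is still true, but it requires the bound the paper actually proves: Parts 2--4 place stars no further left than vertex $m+1$, Parts 5--6 no further left than vertex $m$, and then $m\geq 2$ protects vertex $1$. As written, your argument establishes no such bound, so part (3) is not proved; the repair is to track the star placements of Parts 1--6 separately from the rewritten suffix of the decomposition rather than subsuming them under it.
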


\begin{proof}
Looking at all seven parts of the involution defined and equation~\eqref{eq:varphi}, where we also define the involution, we can clearly see that if $D'\in\cA_L$ such that $D'=C_{m-1}\cdot C_1\cdot  F'$ with $m\geq 2$ where $C_{m-1}\cdot C_1$ has no tic marks, then $\varphi(D')=C_{m-1}\cdot C_1\cdot F''$ also has no tic marks on $C_{m-1}\cdot C_1$, which proves part (i).  Similarly, we can see  that the labels on the first $m$ vertices of  $C_{m-1}\cdot C_1$ match those of the output, which gives up part (ii). 
 
Now we just have to mention why part (iii) is true, that if there is no star-markings on the first vertex of $D'=C_{m-1}\cdot C_1\cdot F'$ then  the image has the same property. If $D'=C_{m-1}\cdot C_1\cdot  F'$ is a fixed point or falls under the inductive case in equation~\eqref{eq:varphi} then we are done. 
The involution part 1 will not effect any part of $C_{m-1}\cdot C_1$. Parts 2, 3 and 4 of the involution at worst will place a star as far left as vertex $m+1$. Parts 5 and 6 of the involution will at worst  place a star as far left as vertex $m$. Because $m\geq 2$ we will never place a star on vertex one.  
\end{proof}

\section{More $e$-positive graphs}
\label{sec:More}
In this section we will use the involution $\varphi$ and other ideas from Section~\ref{sec:TL} along with results from Gebhard and Sagan~\cite{GS01}  to expand on the known families of $e$-positive graphs.  We will show   that any series of concatenations between triangular ladders and complete graphs results in an $e$-positive graph. This will expand on Gebhard and Sagan's result.

\begin{thm}[Gebhard and Sagan~\cite{GS01} Theorem 7.6 and 7.8]
If a graph $G$ is semi-symmetrized $e$-positive then so is $G\cdot K_m$ and $G\cdot TL_4$. 
\label{thm:gebsag_wedge}
\end{thm}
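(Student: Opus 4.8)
The plan is to prove both preservation statements by the deletion--contraction recursion (Proposition~\ref{prop.DeletionContraction}) together with the semi-symmetrized inducing formula (Proposition~\ref{prop:gebsagInduce}), while tracking that every coefficient in the elementary expansion stays non-negative. Semi-symmetrized $e$-positivity is a statement purely about the coefficients of $Y_G$ in the elementary basis modulo $\equiv_n$, so it suffices to check that the operations $G\mapsto G\cdot K_m$ and $G\mapsto G\cdot TL_4$ send a non-negative $e$-expansion to another non-negative one. Because $G$ is an arbitrary semi-symmetrized $e$-positive graph and need not be a unit interval graph, the arc-diagram formula of Section~\ref{sec:formulaSetUp} is unavailable for $G$ itself; the argument must stay at the level of an abstract expansion $Y_G\equiv_n\sum_\pi c_\pi e_\pi$ with all $c_\pi\geq 0$.

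The model case is $G\cdot K_2$, the attachment of a pendant vertex $n+1$ at vertex $n$. Deletion--contraction on the edge $\epsilon=(n,n+1)$ gives $Y_{G\cdot K_2}=Y_G\,e_1-Y_G\uparrow_n^{n+1}$, since deleting $\epsilon$ isolates the new vertex (so the deletion term is $Y_G\cdot Y_{K_1}$) and contracting $\epsilon$ returns $G$. Using $e_\pi e_1=e_{\pi/(n+1)}$ and Proposition~\ref{prop:gebsagInduce} in the shifted form $e_\pi\uparrow_n^{n+1}\equiv_{n+1}\tfrac{1}{b_\pi}\bigl(e_{\pi/(n+1)}-e_{\pi\oplus_n(n+1)}\bigr)$, where $b_\pi$ is the size of the block of $\pi$ containing $n$, the two copies of $e_{\pi/(n+1)}$ combine to give
\[
Y_{G\cdot K_2}\equiv_{n+1}\sum_\pi c_\pi\Bigl(1-\tfrac{1}{b_\pi}\Bigr)e_{\pi/(n+1)}+\sum_\pi\tfrac{c_\pi}{b_\pi}\,e_{\pi\oplus_n(n+1)}.
\]
Since $b_\pi\geq 1$ and $c_\pi\geq 0$, every coefficient is non-negative. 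Here one first records the two elementary facts that $f\equiv_n g$ implies $fe_1\equiv_{n+1}ge_1$ and $f\uparrow_n^{n+1}\equiv_{n+1}g\uparrow_n^{n+1}$, both of which follow from the block-size bookkeeping exactly as in Lemma~\ref{lem:consecuative_induce}.

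For general $G\cdot K_m$ I would exploit that $\equiv$, deletion--contraction, and inducing all depend on $\pi$ only through the block of $\pi$ containing the attachment vertex $n$; by linearity it therefore suffices to treat a single term $e_\pi$ and to show that attaching $K_m$ produces a non-negative combination whose coefficients depend only on $b=b_\pi$ and $m$. I would establish this by induction on $m$, coning the new clique vertex over the existing clique $\{n,\dots,n+m-2\}$. The subtlety is that reducing on a single clique edge creates, in the deletion term, a concatenation of $G$ with a non-complete graph that the induction does not cover, so one must instead peel the new vertex against the whole clique at once and see the signs from the repeated inducing formula telescope. This is precisely Gebhard and Sagan's Theorem~7.6, and I would carry out the same bookkeeping, verifying that the factors $\tfrac1b$ and the alternating signs recombine into non-negative coefficients just as in the $m=2$ case.

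For $G\cdot TL_4$ the attached graph is fixed (four vertices, edges $(n,n{+}1),(n,n{+}2),(n{+}1,n{+}2),(n{+}1,n{+}3),(n{+}2,n{+}3)$), so the extra structure is finite. I would repeatedly apply Proposition~\ref{prop.DeletionContraction} to these five edges to rewrite $Y_{G\cdot TL_4}$ as an explicit signed combination of $Y_G$ induced in various ways and of the already-handled graphs $G\cdot K_j$ for small $j$, then expand every term in the elementary basis modulo $\equiv_{n+3}$ via Proposition~\ref{prop:gebsagInduce}. The claim reduces to checking, over the finitely many resulting set-partition shapes, that the accumulated coefficient of each $e_\sigma$ is non-negative; this is a bounded verification because only the block of $\sigma$ containing the last vertex matters for $\equiv_{n+3}$. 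This is Gebhard and Sagan's Theorem~7.8. The main obstacle throughout is the minus sign in the inducing formula: both deletion--contraction and inducing introduce cancellations, and the real work --- most delicate for $TL_4$, whose attached structure mixes length-one and length-two arcs rather than forming a single clique --- is to show these cancellations never drive a coefficient negative, with the uniform bound $1-\tfrac1b\geq 0$ doing the heavy lifting.
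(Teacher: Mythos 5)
Your $G\cdot K_2$ computation is correct, but it is essentially the only part of the theorem your cited toolkit can reach, and the remainder of the proposal defers exactly the work that constitutes the theorem. For calibration: the paper itself never proves this statement --- it is imported from Gebhard and Sagan --- and the paper's own related result (Lemma~\ref{lem:e_piInduce} and Theorem~\ref{thm:wedgeTL}, which subsumes the $TL_4$ half) is proved by a different mechanism: exact arc-diagram expansions are kept throughout and one passes to equivalence classes only at the very end, precisely to avoid the problem described next.

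The concrete gap: as soon as the attached graph is $K_m$ with $m\geq 3$ (or $TL_4$), iterating the recursion forces inducings at vertices that are no longer the last vertex of the intermediate function --- already for $K_3$ the term $Y_G\uparrow_n^{n+1}\uparrow_n^{n+2}$ appears, and in your $TL_4$ step the term $Y_{G\cdot K_3}\uparrow_{n+1}^{n+3}$ appears. Your reduction to a single $e_\pi$ with ``coefficients depending only on $b_\pi$'' then needs two things you do not supply. First, that the attachment operator is well defined on $\equiv$-classes: the ``block-size bookkeeping exactly as in Lemma~\ref{lem:consecuative_induce}'' covers only multiplication by $e_1$ and inducing at the current last vertex, and inducing at a non-final vertex does \emph{not} preserve $\equiv$ in general; for example $e_{13/2}\equiv_3 e_{1/23}$, yet
\begin{equation*}
e_{13/2}\uparrow_1^4\;\equiv_4\;\tfrac12\bigl(e_{13/2/4}-e_{134/2}\bigr),
\qquad
e_{1/23}\uparrow_1^4\;\equiv_4\;\tfrac12\bigl(e_{1/23/4}-e_{14/23}\bigr),
\end{equation*}
and the negative terms lie in different classes (shape $(3,1)$ with $4$ in the block of size $3$, versus shape $(2,2)$). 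The composite attachment operator does descend to classes, because all its inducings occur at vertices $\geq n$, but proving that requires the relabeling proposition (Lemma~\ref{prop.relabeling}) and an $\fS_{n-1}$-symmetrization argument, which is absent from your write-up. Second, and independently: even granting descent, Proposition~\ref{prop:gebsagInduce} outputs only an equivalence class, and an $\equiv_{n+1}$-class does not determine the class of a subsequent inducing at the non-final vertex $n$; so your stated engine --- ``expand every term modulo $\equiv$ via Proposition~\ref{prop:gebsagInduce}'' --- literally cannot evaluate $e_\pi\uparrow_n^{n+1}\uparrow_n^{n+2}$, nor $Y_{G\cdot K_3}\uparrow_{n+1}^{n+3}$ when $G\cdot K_3$ is known only up to $\equiv$. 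Evaluating such iterated inducings needs genuinely new ingredients (exact inducing formulas, or class-level lemmas for repeated inducing at one vertex); supplying them is the actual content of Gebhard and Sagan's Theorems 7.6 and 7.8, so writing ``this is precisely Gebhard and Sagan's Theorem 7.6, and I would carry out the same bookkeeping'' is circular as a proof of the statement, and the promised telescoping and ``bounded verification'' are exactly the missing parts.
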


The involution $\varphi$ we defined in Section~\ref{sec:TL}  gives us exactly what we need to prove an expanded version of Theorem~\ref{thm:gebsag_wedge}. First, we need to discuss exactly how we can do this, so we present the following lemma that is essential to our proof. This lemma, when $\pi= 1$, is the result we have already proven, that $TL_n$ are semi-summarized $e$-positive.

\begin{lemma}
For any $e_{\pi}$, $\pi\vdash [m]$ with $m,n\geq 1$,  the sum
$$\sum_{D\in\cA(TL_n)}(-1)^{a(D)}e_{\pi}\uparrow_D$$
is semi-summarized $e$-positive. 
\label{lem:e_piInduce}
\end{lemma}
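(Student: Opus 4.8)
The plan is to strip $\pi$ down to the single block that the inducing actually sees, and then to recognize the surviving sum as a chromatic symmetric function on which the involution $\varphi$ from Section~\ref{sec:TL} already acts. Let $B$ be the block of $\pi$ containing $m$ and set $b=|B|$. Each arc created by a series of inducings $\uparrow_D$ with $D\in\cA(TL_n)$ is incident to vertex $m$ or to one of the newly added vertices, so $\uparrow_D$ interacts only with $B$. Using the Relabeling Proposition~\ref{prop.relabeling} and its companion for equivalences with a permutation fixing $m$, I may assume $B=\{m-b+1,\dots,m\}$, whence $\pi=(\pi\setminus B)\,|\,[b]$ and, by multiplicativity of the elementary basis, $e_\pi=e_{\pi\setminus B}\,e_{[b]}$. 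Since inducing touches only the top block,
\[
\sum_{D\in\cA(TL_n)}(-1)^{a(D)}e_\pi\uparrow_D
=e_{\pi\setminus B}\cdot\Bigl(\sum_{D\in\cA(TL_n)}(-1)^{a(D)}e_{[b]}\uparrow_D\Bigr).
\]
The factor $e_{\pi\setminus B}$ is an honest nonnegative elementary function all of whose blocks avoid the top vertex, so multiplying by it preserves $e$-positivity and is compatible with $\equiv$ (the class of a product $e_{\rho\,|\,\tau}$ depends only on that of $\tau$). Thus it suffices to treat the single block $\pi=[b]$.

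For the single block I would use $e_{[b]}=Y_{K_b}$, which holds because the proper colorings of a clique are exactly the tuples of distinct colors. Iterating the deletion--contraction recursion of Theorem~\ref{thm:Ginduce} on the concatenation $K_b\cdot TL_n$, peeling off the $TL_n$ vertices one by one, gives
\[
\sum_{D\in\cA(TL_n)}(-1)^{a(D)}e_{[b]}\uparrow_D=Y_{K_b\cdot TL_n}.
\]
So the lemma reduces to the assertion that $K_b\cdot TL_n$ is semi-symmetrized $e$-positive, which Gebhard and Sagan's Theorem~\ref{thm:gebsag_wedge} supplies only for $n\le 4$. When $b=1$ this is exactly Theorem~\ref{thm:TLpos}, so I focus on $b\ge 2$.

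To treat $Y_{K_b\cdot TL_n}$ I would run the combinatorial expansion of Sections~\ref{sec:formulaSetUp}--\ref{sec:TL} with the front $Y_{K_b}=e_{[b]}$ kept frozen: represent the clique block $[b]$ by a tic-free canonical front, of the shape $C_{b-2}\cdot C_1$ covered by Proposition~\ref{prop:varphirestricts} (a length-at-most-$2$ diagram whose single piece is $[b]$), and expand $e_{[b]}\uparrow_D$ by the Gebhard--Sagan inducing formula of Proposition~\ref{prop:arc_tic_formula}. This realizes $Y_{K_b\cdot TL_n}$, up to $\equiv$, as the signed sum of those diagrams $D'=C_{b-2}\cdot C_1\cdot F'\in\cA'_L(TL_{b+n-1})$ whose clique front carries no tic marks, each weighted $(-1)^{t(D')}e_{\pi(D')}$. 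On this subset I would apply the very involution $\varphi$ of Section~\ref{sec:TL}. Proposition~\ref{prop:varphirestricts} is engineered for exactly this configuration: it sends such a diagram to another of the same shape $C_{b-2}\cdot C_1\cdot F''$, keeps the clique front tic-free, fixes the labels on those vertices, and preserves the star-on-the-first-vertex condition. Hence $\varphi$ restricts to a sign-reversing, weight-preserving involution on this signed subset whose fixed points, having no tic marks, all carry positive sign; reattaching the factor $e_{\pi\setminus B}$ then gives the lemma.

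The main obstacle is the step I have phrased as ``keeping the front frozen.'' I must verify that the diagrams produced by $e_{[b]}\uparrow_D$ are, with matching signs and weights, precisely the subfamily of $\cA'_L(TL_{b+n-1})$ with tic-free clique front on which $\varphi$ operates, and in particular that a tic mark is never created inside the block $[b]$ (a tic on the junction arc only separates $[b]$ from the vertex to its right, leaving $[b]$ intact). These are exactly the conclusions of Proposition~\ref{prop:varphirestricts}, so once it is invoked the involution transfers verbatim; the remaining work is purely the $\equiv$-bookkeeping of the reduction, namely that factoring out $e_{\pi\setminus B}$ and standardizing $B$ by a permutation fixing $m$ respect the equivalence classes of~\eqref{eq:equivclass}.
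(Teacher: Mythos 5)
Your overall strategy is the paper's own: reduce to a single block by multiplicativity and relabeling, expand the resulting sum as a signed sum of diagrams with a frozen, tic-free clique front, and then restrict the involution $\varphi$ to that family via Proposition~\ref{prop:varphirestricts}. But there is a genuine gap at $b=2$, i.e.\ when the block of $\pi$ containing the top vertex has size two. Proposition~\ref{prop:varphirestricts} only covers fronts of the shape $C_{m-1}\cdot C_1$ with $m\geq 2$, that is, fronts with at least three vertices whose section begins with two consecutive IC diagrams; your front $C_{b-2}\cdot C_1$ degenerates for $b=2$ to the single arc $C_1$, which the proposition does not cover, and $\varphi$ genuinely fails to preserve it. Concretely, take $D'=C_1\cdot C_2$ on four vertices with a tic mark on the arc $(2,3)$: its decomposition is $P\cdot Q$ with $P=C_1$ a fixed point whose last factor fails the IC-condition (it is its own section with only one IC diagram), and $Q=C_2$ with $t(Q)\geq 1$, so $D'$ falls under Case~\ref{1a}; involution part 1 then interlaces the two pieces of sizes $2$ and $2$ and outputs $L_3$, whose arcs are $(1,3)$ and $(2,4)$ --- there is no arc $(1,2)$ at all. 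So $\varphi$ maps a diagram with tic-free front $C_1$ out of your frozen-front family, and your restricted involution is not well defined in this case.

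This is exactly the problem the paper solves with the auxiliary bijection $h$: it prepends an artificial vertex, labeled $1$ and carrying no star, turning the front $C_{m-2}\cdot C_1$ into $C_{m-1}\cdot C_1$, then applies $\varphi$, then removes the padding. All three parts of Proposition~\ref{prop:varphirestricts} (tic-free front preserved, front labels preserved, no star created on vertex one) are needed precisely so that $\psi=h^{-1}\circ\varphi\circ h$ is a well-defined sign-reversing involution for every $m\geq 2$, including $m=2$. Your argument does go through essentially unchanged for $b\geq 3$, since there $C_{b-2}\cdot C_1$ already has the required shape, but without the padding step the case $b=2$ is unproven. A second, smaller point: the identification of your frozen-front family with the expansion of $\sum_{D\in\cA(TL_n)}(-1)^{a(D)}e_{[b]}\uparrow_D$ is not a conclusion of Proposition~\ref{prop:varphirestricts}; it requires rerunning the argument of Theorem~\ref{thm:mainFormula}, and in doing so the labels on the front piece must be allowed to be an arbitrary permutation of the smallest labels rather than increasing --- this is exactly how the paper defines $\cA^{(m)}_L(TL_n)$, and it is why label-preservation in Proposition~\ref{prop:varphirestricts} matters at all.
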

\begin{proof}
Because  the elementary basis is multiplicative and because of the relabeling proposition in Lemma~\ref{prop.relabeling} it suffices to show this formula for $\pi=[m]$. When $m=1$ this is exactly showing that the $TL_n$ are semi-summarized $e$-positive, which we have already done in Theorem~\ref{thm:TLpos}. 
Let $m\geq 2$. 
Note that by a  reason very similar to the proof of Theorem~\ref{thm:mainFormula}
$$\sum_{D\in\cA}(-1)^{a(D)}e_{[m]}\uparrow_D\equiv_{m+n}\frac{1}{(m+n)!}\sum_{D'\in \cA^{(m)}_L(TL_n)}(-1)^{t(D)}e_{\pi(D')}$$
where $\cA^{(m)}_L(TL_n)$ is the collection of  all arc diagrams in $\cA'_L(TL_{m+n})$ that start with $C_{m-2}\cdot C_1$, $t(C_{m-2}\cdot C_1)=0$ and the labeling on the first $m-1$ vertices don't have to be increasing but can be any permutation of the smallest $m-1$ labels in the first connected component.

We will define a slightly modified version of the sign-reversing involution on $\psi:\cA^{(m)}_L(TL_n)\rightarrow \cA^{(m)}_L(TL_n)$ by using our involution  $\varphi:\cA'_L(TL_{m+n})\rightarrow \cA'_L(TL_{m+n})$ and defining an invertible map 
$$h: \cA^{(m)}_L(TL_n)\rightarrow \{D'\in \cA^{(m+1)}_L(TL_n): \text{vertex 1 is labeled 1 with no star}\}.$$ 
We define $h$  by taking a  $D'\in \cA^{(m)}_L(TL_n)$  so $D'=C_{m-2}\cdot C_1\cdot F'$ with  define $D''=C_{m-1}\cdot C_1\cdot F'$ by letting $D''$ be $D'$ on vertices 2 through $m+n+1$ but place label 1 on vertex one,  increase all the labels by 1 on vertices 2 through $m+n+1$ and we also change the arcs on the first few vertices so that $D''$ starts with $C_{m-1}$ rather than $C_{m-2}$. This map $h(D')=D''$ is easily reversible, so $h$ is a bijection. We define 
$\psi:\cA^{(m)}_L(TL_n)\rightarrow \cA^{(m)}_L(TL_n)$ by mapping $D'\in \cA^{(m)}_L(TL_n)$ to $h^{-1}\circ\varphi\circ h(D')$. Note that we aren't using $\varphi$ exactly as we defined since our inputs $h(D')$ may have a permutation on the first $m$ vertices. However, by  Proposition~\ref{prop:varphirestricts} we know that $\varphi$ will not change the labeling on the first $m$ vertices of $h(D')$, so this is not an issue. This is the sign-reversing involution we need and by
 Proposition~\ref{prop:varphirestricts} it is well defined. 
\end{proof}

\begin{thm}
 If graph $G$ is semi-summarized $e$-positive then so is $G\cdot TL_m$.
 \label{thm:wedgeTL}
\end{thm}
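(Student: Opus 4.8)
The plan is to peel the triangular-ladder part off of $G\cdot TL_m$ by iterated deletion-contraction, reducing everything to an inducing of the semi-symmetrized form of $Y_G$ along $TL_m$, and then to quote Lemma~\ref{lem:e_piInduce} term by term. Label the vertices of $G\cdot TL_m$ by $[n+m-1]$ so that $G$ sits on $[n]$, the copy of $TL_m$ sits on $\{n,n+1,\dots,n+m-1\}$, and the shared junction is vertex $n$. Since $G$ is semi-symmetrized $e$-positive we may write $Y_G\equiv_n g$ with $g=\sum_{\pi\vdash[n]}c_\pi e_\pi$ and every $c_\pi\ge 0$.

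First I would establish the exact expansion
$$Y_{G\cdot TL_m}=\sum_{D\in\cA(TL_m)}(-1)^{a(D)}\,Y_G\uparrow_D,$$
where $\cA(TL_m)$ denotes the arc diagrams on the ladder vertices $\{n,\dots,n+m-1\}$ with junction $n$, and $\uparrow_D$ is the corresponding string of inducings applied to $Y_G$. This is obtained exactly as in the derivation of~\eqref{eq:Ginduce}: peel the vertices $n+m-1,n+m-2,\dots,n+1$ one at a time, each time applying the deletion-contraction Proposition~\ref{prop.DeletionContraction} to its (at most two) backward edges of length one and two, and stopping the recursion at $G$ rather than at $K_1$. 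Equivalently this is Theorem~\ref{thm:Ginduce} applied repeatedly to the ladder end, with the base of the recursion now $Y_G$ instead of $Y_{K_1}=e_1$.

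The heart of the argument is the substitution step: I claim that the operator $f\mapsto\sum_{D\in\cA(TL_m)}(-1)^{a(D)}f\uparrow_D$ respects $\equiv$, so that
$$Y_{G\cdot TL_m}\equiv_{n+m-1}\sum_{D\in\cA(TL_m)}(-1)^{a(D)}\,g\uparrow_D=\sum_{\pi\vdash[n]}c_\pi\sum_{D\in\cA(TL_m)}(-1)^{a(D)}\,e_\pi\uparrow_D.$$
This is the delicate point, because an isolated length-two inducing $\uparrow_{k-2}^{k}$ does \emph{not} preserve $\equiv$ in general: it can merge the new maximum into a block whose size is not fixed across a $\sim$-class, altering the underlying integer partition. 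The resolution I would use is structural: every arc of $\cA(TL_m)$ has both endpoints among $\{n,\dots,n+m-1\}$, so the ladder inducings touch $G$ only at the junction vertex $n$, and the size of the block containing $n$ is precisely the datum that $\sim_n$ keeps fixed. Concretely, by Lemma~\ref{lem:anyOrder} each $f\uparrow_D$ depends only on the set partition $\pi(D)$, so I may reorder the inducings and, using the relabeling Proposition~\ref{prop.relabeling} together with the corollary that permutations fixing the largest label preserve $\equiv$, standardize each elementary step to a consecutive inducing $\uparrow_{N-1}^{N}$, which does preserve $\equiv$ by Lemma~\ref{lem:consecuative_induce}. Since the non-junction blocks of $G$ are never merged into, the whole composition carries $\equiv_n$-equivalent inputs to $\equiv_{n+m-1}$-equivalent outputs; applying this to $f=Y_G$ and $f=g$ yields the displayed equivalence.

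I expect this preservation claim to be the main obstacle, and it is exactly where the length-two arcs of the ladder must be controlled: the failure for inducings that reach into the interior of $G$ shows that the proof genuinely relies on $TL_m$ attaching to $G$ along the single controlled vertex $n$. Once it is in place the theorem follows immediately. For each fixed $\pi$ the inner sum $\sum_{D\in\cA(TL_m)}(-1)^{a(D)}e_\pi\uparrow_D$ is semi-symmetrized $e$-positive by Lemma~\ref{lem:e_piInduce} (with the roles of $m$ and $n$ interchanged), each coefficient $c_\pi$ is nonnegative, and a nonnegative combination of semi-symmetrized $e$-positive functions is again semi-symmetrized $e$-positive. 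Hence $Y_{G\cdot TL_m}$ is semi-symmetrized $e$-positive, and therefore $G\cdot TL_m$ is $e$-positive.
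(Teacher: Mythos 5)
Your proposal is correct and follows essentially the same route as the paper: expand $Y_{G\cdot TL_m}=\sum_{D\in\cA(TL_m)}(-1)^{a(D)}Y_G\uparrow_D$ by iterated deletion--contraction with base $Y_G$ instead of $Y_{K_1}$, substitute the nonnegative elementary expansion of $Y_G$ up to $\equiv_n$, and invoke Lemma~\ref{lem:e_piInduce} term by term with the roles of $m$ and $n$ swapped. The only difference is that you explicitly justify the substitution step---using that the ladder inducings source only from vertices $\geq n$, a standardization to consecutive inducings handled by Lemma~\ref{lem:consecuative_induce}, and relabelings fixing the largest label---whereas the paper passes over this point with ``combining the two equations above gives us,'' so your version supplies detail the paper omits rather than diverging from it.
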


\begin{proof}
First consider a graph $G$ that is is semi-summarized $e$-positive  and $G\cdot TL_m$. By a very similar reason as in the proof of Proposition~\ref{prop:Ginduce2} we have 
$$Y_{G\cdot TL_m}=\sum_{D\in\cA(TL_m)}(-1)^{a(D)}Y_G\uparrow_D.$$
Since  $G$ is a  semi-summarized $e$-positive graph we know that 
$$Y_G\equiv_n \sum_{(\pi)\subseteq \Pi_n}c_{(\pi)}e_{\pi}$$
where $c_{(\pi)}\geq 0$.  Combining the two equations above gives us
$$Y_{G\cdot TL_m}\equiv_{n+m-1}\sum_{(\pi)\subseteq \Pi_n}c_{(\pi)}\sum_{D\in\cA(TL_m)} (-1)^{a(D)}e_{(\pi)}\uparrow_D.$$
Using Lemma~\ref{lem:e_piInduce} we have that this is also semi-summarized $e$-positive. 
\end{proof}

Putting everything together we have an expanded version of Gebhard and Sagan's Theorem~\ref{thm:gebsag_wedge}. 
\begin{cor}
Any graph $G$ such that 
$$G=G_1\cdot G_2\cdots G_l$$
where $G_i=TL_{n_i}$ or $G_i=K_{n_i}$ is a semi-symmetrized $e$-positive graph, so is also an $e$-positive graph. 
\end{cor}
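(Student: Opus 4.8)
The plan is to prove the statement by induction on the number of factors $l$, peeling off the rightmost factor $G_l$ and invoking the two preservation theorems just established. All of the combinatorial content is already in place: Theorem~\ref{thm:TLpos} shows each $TL_{n_i}$ is semi-symmetrized $e$-positive, Theorem~\ref{thm:wedgeTL} shows that right-concatenation with a triangular ladder preserves semi-symmetrized $e$-positivity, and Theorem~\ref{thm:gebsag_wedge} does the same for right-concatenation with a complete graph. So the corollary is a bookkeeping induction together with two base observations, and I would present it as such.

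First I would record the base case $l=1$. If $G_1 = TL_{n_1}$ this is exactly Theorem~\ref{thm:TLpos}. If $G_1 = K_{n_1}$, then since every proper coloring of a complete graph assigns distinct colors to all vertices we have $Y_{K_{n_1}} = e_{12\cdots n_1}$, a single elementary symmetric function in non-commuting variables with coefficient $+1$; hence $K_{n_1}$ is trivially semi-symmetrized $e$-positive. This settles both possibilities for a single factor.

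For the inductive step, I would assume the result for all such products of fewer than $l$ factors and write $G = (G_1 \cdot G_2 \cdots G_{l-1}) \cdot G_l$, using that concatenation is associative so that this grouping produces the same labeled graph as the original product. By the inductive hypothesis $H := G_1 \cdots G_{l-1}$ is semi-symmetrized $e$-positive. If $G_l = K_{n_l}$ I apply the $G \cdot K_m$ case of Theorem~\ref{thm:gebsag_wedge} to $H$; if $G_l = TL_{n_l}$ I apply Theorem~\ref{thm:wedgeTL} to $H$. In either case $G = H \cdot G_l$ is semi-symmetrized $e$-positive, completing the induction. Finally, as noted in Section~\ref{background}, semi-symmetrized $e$-positivity implies that $\rho(Y_G) = X_G$ is $e$-positive, so $G$ is an $e$-positive graph.

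The only point that requires genuine care — the ``main obstacle,'' though it is mild compared with the work behind Theorems~\ref{thm:gebsag_wedge} and~\ref{thm:wedgeTL} — is the associativity bookkeeping: one must be certain that reparenthesizing the chain as $(G_1\cdots G_{l-1})\cdot G_l$ yields the same labeled graph, so that the inductive hypothesis applies to the genuine left factor $H$ and so that $G_l$ sits in the rightmost position on which the two preservation theorems act. This follows directly from the definition of concatenation in Section~\ref{background}, since each shared-vertex identification is local to the junction between consecutive factors and is therefore independent of how the chain is parenthesized.
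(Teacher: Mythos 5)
Your proof is correct and follows essentially the same route as the paper, which simply cites Theorem~\ref{thm:gebsag_wedge} and Theorem~\ref{thm:wedgeTL}; your explicit induction on $l$, with the base case $Y_{K_{n_1}} = e_{12\cdots n_1}$ and Theorem~\ref{thm:TLpos}, is just the careful write-up of what the paper leaves implicit.
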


\begin{proof}
This follow immediately from Theorem~\ref{thm:gebsag_wedge} and Theorem~\ref{thm:wedgeTL}. 
\end{proof}

By computer calculation it can be confirmed that all unit interval graphs up to $7$ vertices are semi-symmetrized $e$-positive given the natural vertex labeling as given in the definition. It turns out that most permutations of this vertex labeling that do not fix $n$ or sent $n$ to $1$ give graphs that are not  semi-symmetrized $e$-positive. This means that labeling matters in  semi-symmetrized $e$-positivity, which is also the case for $e$-positivity in  quasi-symmetric chromatic symmetric functions defined by  Shareshian and Wachs~\cite{SW16}. It may be that  all unit interval graphs with the natural labeling are semi-symmetrized $e$-positive.

\section*{Acknowledgement}The author would like to thank Stephanie van Willigenburg for bringing attention to this open problem  and for her valuable comments. Also, thank you to Susanna Fishel and Stephanie van Willigenburg for helpful conversations.

\bibliographystyle{plain}

\end{document}